\pdfoutput=1
\RequirePackage{ifpdf}
\ifpdf 
\documentclass[pdftex]{sigma}
\else
\documentclass{sigma}
\fi

\numberwithin{equation}{section}

\newtheorem{Theorem}{Theorem}[section]
\newtheorem{Corollary}[Theorem]{Corollary}
\newtheorem{Lemma}[Theorem]{Lemma}
\newtheorem{Proposition}[Theorem]{Proposition}
 { \theoremstyle{definition}
\newtheorem{Definition}[Theorem]{Definition}
\newtheorem{Example}[Theorem]{Example}
\newtheorem{Remark}[Theorem]{Remark} }

\usepackage{etoolbox}
\apptocmd{\sloppy}{\hbadness 10000\relax}{}{} 

\usepackage{mathtools}

\usepackage{textcomp,mathrsfs,stmaryrd,bm,braket}

\usepackage{tikz}
\tikzstyle{vertex}=[circle,draw,inner sep=0pt,minimum size=12pt] 
\newcommand{\vertex}{\node[vertex]}
\tikzset{
every picture/.style={thick,>=latex,->,node distance=2cm} 
}

\DeclareMathOperator{\Der}{Der}

\DeclareMathOperator{\GL}{GL}
\DeclareMathOperator{\SL}{SL}

\DeclareMathOperator{\cycl}{cycl}
\DeclareMathOperator{\diag}{diag}

\DeclareMathOperator{\Rep}{Rep}

\DeclareMathOperator{\Sym}{Sym}
\DeclareMathOperator{\Tens}{Tens}
\DeclareMathOperator{\gr}{gr}

\DeclareMathOperator{\Id}{Id}
\DeclareMathOperator{\Rees}{Rees}
\DeclareMathOperator{\IMD}{IMD}
\DeclareMathOperator{\Hom}{Hom}
\DeclareMathOperator{\End}{End}
\DeclareMathOperator{\Ker}{Ker}
\DeclareMathOperator{\Tr}{Tr}

\DeclareMathOperator{\dR}{dR}

\DeclareMathOperator{\Aut}{Aut}

\DeclareMathOperator{\Lie}{Lie}

\DeclareMathOperator{\Card}{Card}

\begin{document}
\allowdisplaybreaks

\newcommand{\arXivNumber}{1905.07713}

\renewcommand{\PaperNumber}{103}

\FirstPageHeading

\ShortArticleName{Symmetries of the Simply-Laced Quantum Connection}

\ArticleName{Symmetries of the Simply-Laced Quantum\\ Connections and Quantisation of Quiver Varieties}

\Author{Gabriele REMBADO}

\AuthorNameForHeading{G.~Rembado}

\Address{Hausdorff Centre for Mathematics, Endenicher Allee 62, D-53115, Bonn, Germany}
\Email{\href{mailto:gabriele.rembado@hcm.uni-bonn.de}{gabriele.rembado@hcm.uni-bonn.de}}

\ArticleDates{Received May 02, 2020, in final form October 13, 2020; Published online October 17, 2020}

\Abstract{We will exhibit a group of symmetries of the simply-laced quantum connections, generalising the quantum/Howe duality relating KZ and the Casimir connection. These symmetries arise as a quantisation of the classical symmetries of the simply-laced isomonodromy systems, which in turn generalise the Harnad duality. The quantisation of the classical symmetries involves constructing the quantum Hamiltonian reduction of the representation variety of any simply-laced quiver, both in filtered and in deformation quantisation.}

\Keywords{isomonodromic deformations; quantum integrable systems; quiver varieties; deformation quantisation; quantum Hamiltonian reduction}

\Classification{81R99}

\section{Introduction and main results}\label{sec:intro}

In a previous paper~\cite{rembado_2019_simply_laced_quantum_connections_generalising_kz} a large class of quantum connections was constructed (the simply-laced quantum connections) that contains as special cases the KZ connection~\cite{knizhnik_zamolodchikov_1984_current_algebra_and_wess_zumino_model_in_two_dimensions}, the Casimir connection~\cite{millson_toledanolaredo_2005_casimir_operators_and_monodromy_representations_of_generalised_braid_groups} and the FMTV connection~\cite{felder_markov_tarasov_varchenko_2000_differential_equations_compatible_with_kz_equations}. They arose by quantising the simply-laced isomonodromy systems of~\cite{boalch_2012_simply_laced_isomonodromy_systems}, each of which is a flat \textit{nonlinear} connection, controlling the isomonodromy deformations of certain meromorphic connections on the Riemann sphere. It was shown in~\cite{boalch_2012_simply_laced_isomonodromy_systems} that the simply-laced isomonodromy systems admit an $\SL_2(\mathbb{C})$ symmetry group, obtained by working with modules for the one-dimensional Weyl algebra, rather than meromorphic connections on the Riemann sphere, and then considering the natural symmetries of the Weyl algebra.

The aim of this paper is to show that the $\SL_2(\mathbb{C})$ symmetry group may be lifted to the simply-laced quantum connections.

In more detail, a nonautonomous integrable Hamiltonian system $H \colon \mathbb{M} \times \mathbf{B} \to \mathbb{C}^I$ is defined on a certain trivial symplectic fibration in~\cite{boalch_2012_simply_laced_isomonodromy_systems}, and shown to control the isomonodromic deformations of meromorphic connections on holomorphically trivial vector bundles $U \times \mathbb{C}P^1 \to \mathbb{C}P^1$ on the Riemann sphere, where $U$ is a finite dimensional complex vector space.

The meromorphic connections we consider can be written
\begin{equation}\label{eq:meromorphic_connections_intro}
 \nabla = {\rm d} - \left(Az + X + \sum_{i \in I'} \frac{R_i}{z - t_i}\right){\rm d}z,
\end{equation}
where $A,X,R_i \in \End(U)$, $z$ is the standard holomorphic coordinate on $\mathbb{C} \subseteq \mathbb{C}P^1$, and $t_i \in \mathbb{C}$ for $i \in I'$ (a finite set). This connections has a simple pole at~$t_i$ with residue~$R_i$, and a pole of order three at infinity when $A \neq 0$. One can then deform~\eqref{eq:meromorphic_connections_intro} by varying the positions of the simple poles (regular singularities); but importantly deformations of the principal part of the connection at infinity can be introduced as follows.
Assume the space $U = \bigoplus_{j \in J'} W^j$ is graded by a finite set $J'$, decompose $X = T + B$, where $T$ and $B$ are the diagonal and off-diagonal part of $X$ in the grading, and suppose the components $T^j \in \End\big(W^j\big)$ of $T$ be diagonalisable (allowing for $A$ and $T$ to have repeated eigenvalues). Now vary: (i)~the position of the simple poles, so that distinct ones do not coalesce; and (ii)~the spectrum of $T^j$, so that the eigenspace decomposition of $W^j$ is not perturbed. Accordingly, one looks for deformations of $B$ and the $R_i$ such that the extended monodromy (Stokes data) of~\eqref{eq:meromorphic_connections_intro} stays fixed. This is by definition an isomonodromic deformation of~$\nabla$, and is controlled by a system of nonolinear differential equations for the endomorphisms~$B$, $R_i$ as functions of $t_i$ and the spectral type of~$T$: the \textit{isomonodromy equations}.

A geometric interpretation can be given by introducing the complex vector space $\mathbb{M}$ paramet\-ri\-sing the linear maps $(B,R_i)_{i \in I'}$, as well as a space of (isomonodromy) times $\mathbf{B} \subseteq \mathbb{C}^I$ corresponding to the admissible deformations of the spectrum of $T$ and the configuration of simple poles. Then the isomonodromy equations are differential equations for local sections $\Gamma$ of the trivial fibration $\mathbb{M} \times \mathbf{B} \to \mathbf{B}$, and they admit a Hamiltonian interpretation: there exists a time-dependent Hamiltonian system $H \colon \mathbb{M} \times \mathbf{B} \to \mathbb{C}^I$ so that the isomonodromy equations along the deformation of~$t_i$ become
\begin{equation*}
 \frac{\partial \Gamma_k}{\partial t_i} = \{H_i,\Gamma_k\},
\end{equation*}
where $\Gamma_k$ and $H_i$ are components of $\Gamma$ and $H$ respectively, and $\{\cdot,\cdot\} = \omega^{-1}$ is the Poisson bracket on $\mathbb{M}$ corresponding to a complex symplectic structure $\omega \in \Omega^2(\mathbb{M},\mathbb{C})$. The same holds for the deformations along the spectral type of $T$.

Since the space $\mathbb{M}$ can be realised as the space of representation of a simply-laced graph, the system $H$ is called the \textit{simply-laced isomonodromy system}. The integral manifolds of $H$ define a flat symplectic nonlinear/Ehresmann connection in the symplectic fibration $\mathbb{M} \times \mathbf{B} \to \mathbf{B}$, whose leaves are precisely isomonodromic families of the meromorphic connections~\eqref{eq:meromorphic_connections_intro}: it is thus called the \textit{isomonodromy connection}, and constitutes an irregular version of the nonabelian Gau\ss--Manin connection~\cite{boalch_2001_symplectic_manifolds_and_isomonodromic_deformations}.

After taking symplectic quotients the isomonodromy system and the isomonodromy connection descend to a bundle of Nakajima quiver varieties over the space of admissible deformations~$\mathbf{B}$, whose fibre parametrise the isomorphism classes of the connections~\eqref{eq:meromorphic_connections_intro}.

Note that this symplectic interpretation of isomonodromic deformations holds for arbitrary polar divisors, surface genera and complex reductive gauge groups~\cite{boalch_2011_geometry_and_braiding_of_stokes_data_fission_and_wild_character_varieties,boalch_yamakawa_2015_twisted_wild_character_varieties}. The base $\mathbf{B}$ corresponds in general to space of admissible deformations of \textit{wild Riemann surface} structures, to be defined in Section~\ref{sec:moduli_spaces} for the case which is relevant to this article. Loosely speaking, such a structure prescribes both the positions of the poles and a local normal form for the principal part of the meromorphic connections at the poles, and reduces to the choice of the complex numbers $t_i$ and the semisimple endomorphisms $T^j$ in the case treated here.

Hence in brief the fibration $\mathbb{M} \times \mathbf{B} \to \mathbf{B}$ with its flat Hamiltonian system generalises after reduction to local systems $\widetilde{\mathcal{M}}^*_{\dR} \to \mathbf{B}$ of symplectic moduli spaces of meromorphic connections over spaces of admissible deformations of wild Riemann spheres. When a preferred global trivialisation of the local system of moduli spaces is given, then it makes sense to try to integrate the isomonodromy connection into a nonautonomous Hamiltonian system, as is the case of the simply-laced isomonodromy systems.

An example of simply-laced isomonodromy system is the Schlesinger system~\cite{schlesinger_1905_ueber_die_loesungen_gewisser_linearer_differentialgleichungen_als_funktionen_der_singularen_punkte}~-- when $A = X = 0$ in~\eqref{eq:meromorphic_connections_intro}~-- controlling the isomonodromic deformations of Fuchsian systems on the sphere. Moving beyond regular singularities, the Harnad duality~\cite{harnad_1994_dual_isomonodromic_deformations_and_moment_maps_to_loop_algebras} turns a logarithmic connection into a connection with a pole order two at infinity and a unique simple pole at zero~-- when $A = B = 0$ and $\lvert I' \rvert = 1$ in~\eqref{eq:meromorphic_connections_intro}. Then the simply-laced isomonodromy system is a dual version of the Schlesinger system, where deformations of the positions of the simple poles are turned into deformations of the spectrum of the irregular type at infinity, that is the spectrum of the rational function $T\,{\rm d}z$. Generalising this example to arbitrarily many simple poles yields the isomonodromy system of Jimbo--Miwa--M\^ori--Sato~\cite{jimbo_miwa_mori_sato_1980_density_matrix_of_an_impenetrable_bose_gas_and_the_fifth_painleve_transcendent}, in which the Harnad duality swaps over the two sets of isomonodromy times, and this article deals with a quantum extension of this involution.

Indeed more recently the quantisation of this rich geometric picture was pursued. The local system $\mathbb{M} \times \mathbf{B} \to \mathbf{B}$ of symplectic spaces dually corresponds to a bundle of commutative Poisson algebras $A_0 \times \mathbf{B} \to \mathbf{B}$, where $A_0 \cong \Sym(\mathbb{M}^*)$ is the ring of polynomial functions on the complex affine space $\mathbb{M}$ equipped with the symplectic Poisson bracket $\{\cdot,\cdot\}$. The simply-laced Hamiltonians $H_i$ define algebraic sections of this bundle, and one can ask to construct a deformation quantisation $\big(\widehat{A},\ast\big)$ of $(A_0,\{\cdot,\cdot\})$, together with algebraic sections $\widehat{H}_i$ of the bundle of noncommutative $\mathbb{C} \llbracket \hslash \rrbracket$-algebras $\widehat{A} \times \mathbf{B} \to \mathbf{B}$ projecting back to the simply-laced Hamiltonians when composed with the semiclassical limit $\widehat{A} \to A_0$.

This can indeed be done~\cite{rembado_2019_simply_laced_quantum_connections_generalising_kz}, and moreover in such a way that the connection
\begin{equation}\label{eq:slqc_intro}
 \widehat{\nabla} = {\rm d} - \widehat{\varpi}, \qquad \text{with} \quad \widehat{\varpi} \coloneqq \sum_{i \in I} \widehat{H}_i \, {\rm d}t_i,
\end{equation}
is strongly flat, where the $\widehat{H}_i$ acts on the fibre $\widehat{A}$ via its commutator. The connection~\eqref{eq:slqc_intro} is the \textit{universal simply-laced quantum connection}, a strongly integrable deformation quantisation of the simply-laced isomonodromy system, and the equations for a local horizontal section of the bundle $\widehat{A} \times \mathbf{B}$ are the corresponding \textit{quantum isomonodromy equations}.

Hence in brief one replaces a local system of symplectic manifolds with a flat vector bundle, constructed via fibrewise deformation quantisation of the rings of functions. In this viewpoint~\eqref{eq:slqc_intro} is a genus zero irregular analogue of the Hitchin connection~\cite{axelrod_dellapietra_witten_1991_geometric_quantisation_of_chern_simons_gauge_theory,hitchin_1990_flat_connections_and_geometric_quantisation} in deformation quantisation, where the base space of the quantum bundle is extended from the ordinary Riemann moduli space~-- which would be trivial in genus zero~-- to a space of admissible deformations of wild Riemann spheres, and where importantly one quantises moduli spaces of meromorphic connections instead of nonsingular ones. For the same reasons, it is also an irregular analogue of the connection of Witten~\cite{witten_1991_quantization_of_chern_simons_gauge_theory_with_complex_gauge_group}, where meromorphic connections replace holomorphic connections/Higgs fields.

Instead of trying to construct other flat quantum connections out of the deformation quantisation of isomonodromy systems, this article further inspects the symmetries of the simply-laced isomonodromy systems, which involve a higher viewpoint on the aforementioned Harnad duality~\cite{harnad_1994_dual_isomonodromic_deformations_and_moment_maps_to_loop_algebras} (see Example~\ref{example:harnad_duality}).
To introduce it let $W = \bigoplus_{i \in I'} V_i$ be an $I'$-graded vector space and $V \coloneqq W \oplus U$, where $U$ is as in the beginning. Then matrices of differential operators $M \in \End(V) \otimes \mathbb{C} [\partial_z,z]$ yields differential equations for functions $v \colon \mathbb{C} \to V$, which can in some cases be read as meromorphic connections. For example if
\begin{equation*}
 M \coloneqq \begin{pmatrix}
		z - T & P \\
		Q & \partial_z \end{pmatrix}, \qquad \text{with} \quad Q \colon W \leftrightarrows U  \ {\colon}\!P \quad \text{and} \quad T \in \End(W),
\end{equation*}
then the equation $Mv = 0$ expresses the flatness of the $U$-component of $v$ as a local section of $U \times \mathbb{C}P^1 \to \mathbb{C}P^1$, with respect to the logarithmic connection $\nabla_U = {\rm d} - Q(z - T)^{-1}P\,{\rm d}z$. This still corresponds to the case $A = X = 0$ of~\eqref{eq:meromorphic_connections_intro}, with residues $R_i = Q_iP_i$ given by the graded components $Q_i \colon V_i \leftrightarrows U \ {\colon}\! P_i$, and where $\{t_i\}_{i \in I'}$ is the spectrum of~$T$.

The Fourier--Laplace transform $\partial_z \mapsto -z$, $z \mapsto \partial_z$ turns $M$ into a new matrix $M'$, and $M'v = 0$ now expresses the flatness of the $W$-component of $v$ as a local section of $W \times \mathbb{C}P^1 \to \mathbb{C}P^1$, with respect to the connection $\nabla_W = {\rm d} - \big(T - \frac{PQ}{z}\big){\rm d}z$. This corresponds to the case $A = B = 0$ and $\lvert I' \rvert = 1$ of~\eqref{eq:meromorphic_connections_intro}, and thus the Fourier--Laplace transform recovers the Harnad duality that swaps over the rational differential operators $\nabla_U$ and $\nabla_W$~-- and relates the isomonodromic deformations of connections living of different bundles.

This viewpoint makes transparent that more general transformations are allowed: the whole of $\SL_2(\mathbb{C})$ acts on the Weyl algebra $\mathbb{C} [\partial_z,z]$ by a symplectic reparametrisation of $\mathbb{C}^2 \cong T^*\mathbb{C}$, and it turns out that the reduced simply-laced isomonodromy systems are invariant under the full action~\cite{boalch_2012_simply_laced_isomonodromy_systems}. Hence the Poisson structures on the bundles of classical algebras are split in orbits for the $\SL_2(\mathbb{C})$-action. Further, the isomorphisms along one orbit become flat when the classical Hamiltonian reduction of both the algebra~$A_0$ and the isomonodromy connection is taken. We thus consider the quantisation of this statement, and ask:
\begin{center}
{\it can one quantise the classical $\SL_2(\mathbb{C})$-action to construct isomorphisms\\ of $($quantum$)$ algebras at different choices of $\ast$-products,\\ so that the reduced quantum isomonodromy equations are invariant?}
\end{center}

This article shows that this is indeed possible, proving the following.

\begin{theorem*}[Section~\ref{sec:quantisation_of_action}]
There exists a natural quantisation of the $\SL_2(\mathbb{C})$-symmetries of the commutative Poisson algebras $(A_0,\{\cdot,\cdot\})$, resulting in $\SL_2(\mathbb{C})$-symmetries of the topologically free $\mathbb{C}\llbracket \hslash \rrbracket$-algebras $\big(\widehat{A},\ast\big)$.
\end{theorem*}

\begin{theorem*}[Sections~\ref{sec:quantum_reduction_quiver_varieties} and~\ref{sec:quantum_action_reduction_compatibility}]
For any choice of coadjoint orbit the quantum Hamiltonian reductions $R\big(\widehat{A},\ast\big)$ of the algebras $\big(\widehat{A},\ast\big)$ can be defined in such a way that both the simply-laced quantum connection~\eqref{eq:slqc_intro} and the quantum $\SL_2(\mathbb{C})$-action can be reduced.
\end{theorem*}

This theorem is obtained as a particular case of a more general construction, namely the quantum Hamiltonian reduction of symplectic varieties of representations of simply-laced quivers. This construction is carried out in Section~\ref{sec:quantum_reduction_quiver_varieties}, and should in
principle follow from~\cite{BezrukavnikovLoseu}.

\begin{theorem*}[Section~\ref{sec:quantum_invariance}]
At sufficiently generic coadjoint orbit the reduced quantum $\SL_2(\mathbb{C})$-action yields flat isomorphisms of the bundles $R\big(\widehat{A},\ast\big) \times \mathbf{B}$, equipped with the reduced $($universal$)$ simply-laced quantum connection.
\end{theorem*}

These results generalise the use of the quantum/Howe duality~\cite{baumann_1999_the_q_weyl_group_of_a_q_schur_algebra} to relate the KZ and the Casimir connection for $\mathfrak{gl}_n(\mathbb{C})$, as was considered in~\cite{toledanolaredo_2002_a_kohno_drinfeld_theorem_for_quantum_weyl_groups}. Further, since the quantum Hamiltonian reduction of Section~\ref{sec:quantum_reduction_quiver_varieties} applies to any simply-laced quiver it generalises the reduction of the simply-laced quantum connections constructed in~\cite{rembado_2019_simply_laced_quantum_connections_generalising_kz}~-- where only the cases relevant to the KZ connection, the Casimir connection and the dynamical connection were worked out.

\section*{Layout of the article}

In Section~\ref{sec:classical_symmetries} we introduce the action of the group $\SL_2(\mathbb{C})$ on a vector space $\mathbb{M}$ of presentations of modules for the one-dimensional Weyl algebra. Then in~\ref{sec:symplectic_classical_action} we introduce a family of symplectic structures $\omega_{\bm{a}}$ on $\mathbb{M}$ parametrised by embeddings $\bm{a} \colon J \hookrightarrow \mathbb{P}\big(\mathbb{C}^2\big)$ of a finite set $J$ into the complex projective line, and recall how the $\SL_2(\mathbb{C})$-action yields simplectomorphisms compatible with the action on the embeddings. This finally yields a Poisson action on the bundle of commutative Poisson algebras $A_0 =\mathbb{C}[\mathbb{M}] \cong \Sym(\mathbb{M}^*)$ of functions on the symplectic vector space $(\mathbb{M},\omega_{\bm{a}})$~-- over the base space of choices of symplectic structure/Poisson bracket.

In Sections~\ref{sec:filtered_quantisation} and~\ref{sec:rees_construction} we construct a topologically free $\mathbb{C}\llbracket\hslash\rrbracket$-algebra $\widehat{A}$, a formal deformation quantisation of the commutative Poisson algebra $A_0$ coming from a filtered quantisation via the Rees construction; the filtered quantisation itself is the noncommutative Weyl algebra $(A,\ast_{\bm{a}})$ of the Poisson space $\big(\mathbb{M}^*,\omega_{\bm{a}}^{-1}\big)$. Then in Section~\ref{sec:quantisation_of_action} we construct a quantisation of the $\SL_2(\mathbb{C})$-action on the bundle of deformation algebras $\big(\widehat{A},\ast_{\bm{a}}\big)$~-- over the base space of choices of symplectic structure/Poisson bracket.

In Section~\ref{sec:slims} we recall the definition of the simply-laced isomonodromy systems, which are strongly integrable nonautonomous Hamiltonian systems $H_i \colon \mathbb{M} \times \mathbf{B} \to \mathbb{C}^I$. In Section~\ref{sec:moduli_spaces} we explain how to reduce this system to obtain the isomonodromy connection inside a symplectic bundle $\widetilde{\mathcal{M}}^*_{\dR} \to \mathbf{B}$ of moduli spaces of meromorphic connections on the sphere, defined over a~space of variations of wild Riemann surfaces of genus zero.

In Section~\ref{sec:classical_hamiltonian_reduction} we recall the definition of the classical Hamiltonian reduction of a commutative Poisson algebra with respect to a Lie algebra action, and define both the reduction of the simply-laced isomonodromy systems and of the (classical) $\SL_2(\mathbb{C})$-action. For this we introduce the viewpoint of representation spaces for a particular class of simply-laced quivers. Then in Section~\ref{sec:classical_invariance} we recall that the reduced simply-laced Hamiltonians are shifted by constants under the reduced action, and interpret this result as the existence of a family of flat isomorphism of bundles~-- equipped with (flat) reduced isomonodromy connections.

In Section~\ref{sec:slqc} we recall the definition of the universal simply-laced quantum connection and the simply-laced quantum connection (the latter is obtained from the former by specifying a~level for the deformation parameter $\hslash$). In Section~\ref{sec:action_quantisation_compatibility} we show that the quantisation of the simply-laced isomonodromy systems is compatible with the quantum $\SL_2(\mathbb{C})$-action constructed in Section~\ref{sec:quantum_action}.

In Sections~\ref{sec:quantum_hamiltonian_reduction_filtered} and~\ref{sec:quantum_hamiltonian_reduction_formal} we recall the definition of quantum Hamiltonian reduction both in filtered and deformation quantisation. In Section~\ref{sec:quantum_reduction_quiver_varieties} we construct the quantum Hamiltonian reduction of the symplectic variety of representations of a simply-laced quiver, and in Section~\ref{sec:quantum_action_reduction_compatibility} we show that the quantum action of Section~\ref{sec:quantisation_of_action} is compatible with the quantum Hamiltonian reduction, thereby defining the reduced quantum $\SL_2(\mathbb{C})$-action.

Finally, in Section~\ref{sec:quantum_invariance} we put together all the previous results to prove the last theorem: the reduced simply-laced quantum connection is invariant under the reduced quantum $\SL_2(\mathbb{C})$-action, at sufficiently generic choices of coadjoint orbit. We interpret this result as the existence of a family of projectively flat isomorphisms of bundles~-- equipped with (flat) reduced simply-laced quantum connections.

All vector spaces and associative/Lie algebras are tacitly defined over $\mathbb{C}$; all associative algebras are unitary and finitely generated. All gradings and filtrations of algebras are over $\mathbb{Z}_{\geq 0}$ and all filtrations are exhaustive.

\section{Classical symmetries}\label{sec:classical_symmetries}

The moduli space of isomorphism classes of meromorphic connections~\eqref{eq:meromorphic_connections_intro} can be realised as the complex symplectic quotient of a vector space parametrising presentations of modules for the one-dimensional Weyl algebra. This is the higher viewpoint that makes the global $\SL_2(\mathbb{C})$-symmetries explicit.

\subsection{Modules for the Weyl algebra}
\label{sec:weyl_modules}

Let $V$ be a finite-dimensional vector space, and choose endomorphisms $\alpha, \beta, \gamma \in \End(V)$ such that $\alpha$ and $\beta$ are simultaneously diagonalisable with kernels intersecting only at the origin. Set then $\partial = \frac{\partial}{\partial z}$, where $z$ is the standard holomorphic coordinate on the complex plane, and consider the differential operator
\begin{equation*}
 M = M(\alpha,\beta,\gamma) \coloneqq \alpha \partial + \beta z - \gamma \in \End(V) \otimes A_1,
\end{equation*}
where $A_1$ is the one-dimensional Weyl algebra, that is the quotient of the free algebra on the set $\{\partial,z\}$ modulo the canonical commutation relations:
\begin{equation}\label{eq:weyl_algebra_dimension_one}
 A_1 \coloneqq \mathbb{C} \langle \partial,z \rangle \big\slash (\partial z - z \partial = 1).
\end{equation}
The space of solutions of the system $Mv = 0$, where $v$ is a local $V$-valued holomorphic function on the complex plane, is naturally related to the quotient (left) module
\begin{equation}\label{eq:weyl_modules}
 \mathcal{N} = \mathcal{N}(\alpha,\beta,\gamma) \coloneqq V \otimes A_1 \big\slash (V \otimes A_1 \cdot M).
\end{equation}
Hence the endomorphisms $\alpha$, $\beta$, $\gamma$ parametrise presentations for the $A_1$-modules~\eqref{eq:weyl_modules}, and the description can be simplified after a diagonalisation and a normalisation.

To this end choose a basis of simultaneous eigenvectors for $\alpha$ and $\beta$, so that $V \cong \mathbb{C}^n$ for some positive integer $n$ and $\alpha = \diag(\alpha_1,\dotsc,\alpha_n)$, $\beta = \diag(\beta_1,\dotsc,\beta_n)$ for some complex numbers~$\alpha_i$,~$\beta_i$. Since $\Ker(\alpha) \cap \Ker(\beta) = (0)$ the points $a_j \coloneqq [-\beta_j : \alpha_j]$ are well defined in the complex projective line $\mathbb{P}\big(\mathbb{C}^2\big)$, which is identified to $\mathbb{C} \cup \{\infty\}$ by letting $\mathbb{C}$ be the image of the standard affine chart sending $[0 : 1]$ to the origin:
\begin{equation*}
[-\beta_j : \alpha_j ] \longmapsto \begin{cases}
 \infty, & \alpha_j = 0, \\
 -\dfrac{\beta_j}{\alpha_j}, & \text{else}.
 \end{cases}
\end{equation*}
Denote $J$ the index set for the points $\bm{a} \coloneqq \{a_j\}_j \subseteq \mathbb{C} \cup \{\infty\}$, which we equivalently think of as an embedding $\bm{a} \colon J \hookrightarrow \mathbb{C} \cup \infty$. Now define $W^j \subseteq V$ as the joint eigenspace for $\alpha$ and $\beta$ such that the corresponding eigenvalues map to~$a_j$:
\begin{equation*}
 W^j \coloneqq \big\{v \in V \,|\, \alpha(v) = \lambda v, \, \beta(v) = \mu v, \, [-\mu : \lambda] = a_j\big\}.
\end{equation*}
There is thus a $J$-grading $V = \bigoplus_{j \in J} W^j$, and assume further that the diagonal part of $\gamma$ in the decomposition
\begin{equation*}
 \End(V) = \bigoplus_{i \neq j \in J} \Hom\big(W^i,W^j\big) \oplus \bigoplus_{j \in J} \End\big(W^j\big)
\end{equation*}
be diagonalisable.

Now one can normalise $M$ by left multiplication with a (constant) diagonal $n$-by-$n$ matrix $N$ so that the following two distinct cases arise according to whether $\infty \in a(J)$ or not.

\begin{example*}[degenerate normal form]If the infinity does belong to the image of embedding we denote $\infty \in J$ the element $\bm{a}^{-1}(\infty)$, so that $W^{\infty} = \Ker(\alpha) \subseteq V$. Then decompose $V = W^{\infty} \oplus U^{\infty}$, with $U^{\infty} \coloneqq \bigoplus_{j \neq \infty} W^j$, and accordingly $\alpha = \left(\begin{smallmatrix}
			 0 & \\
			 & \alpha_{II} \end{smallmatrix}\right)$, $\beta = \left(\begin{smallmatrix}
						 \beta_I & \\
						 & \beta_{II}
						 \end{smallmatrix}\right)$~-- where an empty nondiagonal entry signals vanishing coefficients. If one sets $N = \left(\begin{smallmatrix}
								 \beta_I^{-1} & \\
								 & \alpha_{II}^{-1}
								 \end{smallmatrix}\right)$ then						
\begin{equation}\label{eq:normal_degenerate_form}
 NM = \begin{pmatrix}
 0 & \\
 & \Id_{U^{\infty}}
 \end{pmatrix}\partial + \begin{pmatrix}
				\Id_{W^{\infty}} & \\
				& - A
			 \end{pmatrix}z - \begin{pmatrix}
						T^{\infty} & P \\
						Q & B + T
					 \end{pmatrix} \in \End(V) \otimes A_1,
\end{equation}
where $A = \bigoplus_{j \neq \infty} a_j\Id_{W^j}$ is the endomorphism acting on $W^j$ via the scalar $a_j$, the diagonalisable endomorphisms $T^{\infty} \in \End(W^{\infty})$ and $T \in \End(U^{\infty})$ constitute the diagonal part of $\gamma$, and $\left(\begin{smallmatrix}
	 0 & P \\
	 Q & B
	 \end{smallmatrix}\right)$ the off-diagonal part of $\gamma$. Hence $P \in \Hom(U^{\infty},W^{\infty})$, $Q \in \Hom(W^{\infty},U^{\infty})$ and
	 \begin{equation*}
B = \bigoplus_{i \neq j \in J \setminus \{\infty\}} B^{ij}, \qquad \text{with} \quad B^{ij} \in \Hom\big(W^j,W^i\big).	 \end{equation*}
The components $P^j \colon W^j \to W^{\infty}$ and $Q^j \colon W^{\infty} \to W^j$ could be written $P^j = B^{\infty j}$, $Q^j = P^{j \infty}$ for the sake of a uniform notation. The customary notation for positions and momenta variables is instead used with a view towards the symplectic pairing~\eqref{eq:symplectic_form}.
\end{example*}

\begin{example*}[generic normal form]If instead $\infty \not\in \bm{a}(J)$ then put $N = \alpha^{-1}$ to find
\begin{equation}\label{eq:normal_generic_form}
 NM = \Id_V \partial - A z - \big(B + T\big) \in \End(V) \otimes A_1,
\end{equation}
with $A$, $B$, $T$ defined as above.
\end{example*}

Thus in brief the elements $A, \gamma \in \End(V)$ parametrise the normal forms~\eqref{eq:normal_degenerate_form} and~\eqref{eq:normal_generic_form} for the presentation of the Weyl modules~\eqref{eq:weyl_modules}; more precisely~\eqref{eq:normal_degenerate_form} is a \textit{degenerate normal form} and~\eqref{eq:normal_generic_form} a \textit{generic normal form}. The choice of the endomorphism $A$ will be encoded in a (linear) complex symplectic form $\omega_{\bm{a}} \in \Omega^2(\mathbb{M},\mathbb{C})$ on the vector space $\mathbb{M} \coloneqq \bigoplus_{i \neq j \in J} \Hom\big(W^i,W^j\big)$ in Section~\ref{sec:symplectic_classical_action}, whereas the spectral types of $T^{\infty}$, $T$ will become the regular and irregular times of the simply-laced isomonodromy systems, respectively, in Section~\ref{sec:slims}.

\subsection{Action on presentations for Weyl algebra modules}\label{sec:action_on_weyl_modules}

An element $g = \left(\begin{smallmatrix}
 a & b \\
 c & d
 \end{smallmatrix}\right) \in \SL_2(\mathbb{C})$ acts on the Weyl algebra $A_1$ on the left by transforming the generators via
\begin{equation*}
 g.\begin{pmatrix}
		\partial \\
		z
		\end{pmatrix} \coloneqq \begin{pmatrix}
					 a\partial + b z \\
					 c\partial + d z
					\end{pmatrix}.
\end{equation*}
This turns $M = \alpha \partial + \beta z - \gamma$ into
\begin{equation}\label{eq:action_on_matrices}
 g.M = (a \alpha + c\beta)\partial + (b\alpha + d\beta)z - \gamma,
\end{equation}
also by a left action.

It follows from~\eqref{eq:action_on_matrices} that the point $a_j$ transforms as
\begin{equation}\label{eq:action_on_embedding}
 a_j.g = [-\beta_j : \alpha_j].g = [-(b \alpha_j + d\beta_j) : a\alpha_j + c\beta_j],
\end{equation}
that is via the inverse of the standard action of $(\text{P})\SL_2(\mathbb{C})$ as the group of automorphisms of the Riemann sphere~-- since $g^{-1} = \left(\begin{smallmatrix}
 d & -b \\
 -c & a
\end{smallmatrix}\right)$. In particular $\infty.g = [-d : c]$, and thus $\infty$ is fixed if $c = 0$. Conversely, one has $a_j.g = \infty$ for $a_j \neq \infty$ if and only if $a = ca_j$.

Hence $\SL_2(\mathbb{C})$ acts on the space of embeddings $\bm{a} \colon J \hookrightarrow \mathbb{C} \cup \{\infty\}$ on the right (but not on the set $J$). There is then an induced action on $\gamma$, defined from the change of normal forms. Namely, one passes from $NM \in \End(V) \otimes A_1$ to $N'(g.M) \in \End(V) \otimes A_1$, where $N'$ is a suitable (block diagonal) matrix putting $g.M$ in one of the normal forms~\eqref{eq:normal_degenerate_form},~\eqref{eq:normal_generic_form} by left multiplication. The overall action on $\gamma$ is thus given by the left multiplication by the diagonal matrix $E \coloneqq N'N^{-1}$. We will have $E = \bigoplus_{i \in I} \eta_i \Id_{W^i}$, with the numbers $\eta_i \in \mathbb{C}^{\times}$ depending on $g$ and $\bm{a}$, so that the multiplication $g.\gamma = E\gamma$ yields the transformation
\begin{equation}\label{eq:action_on_fibration}
 g \colon \ B^{ij} \longmapsto \eta_i B^{ij}, \qquad T^i \longmapsto \eta_i T^i, \qquad \text{for} \quad i \neq j \in J.
\end{equation}
This restricts to the linear map $\varphi(\bm{a}) \colon \mathbb{M} \to \mathbb{M}$ given by
\begin{equation*}
 \varphi(\bm{a}) = \bigoplus_{i \in J} \bigoplus_{j \neq i} \eta_i I_{ij} \in \GL(\mathbb{M}),
\end{equation*}
where $I_{ij} \coloneqq \Id_{\Hom(W^j,W^i)}$. Thus the pull-back $\varphi^*_g(\bm{a}) \coloneqq  (\varphi(\bm{a}) )^*$ sends linear functions to linear functions: it is the diagonalisable endomorphism of $\mathbb{M}^*$ which multiplies linear functions on $\Hom(W^j,W^i) \subseteq \mathbb{M}$ by $\eta_i$, i.e.,
\begin{equation}\label{eq:classical_action_decomposition}
 \left.\varphi^*_g(\bm{a})\right\vert_{\mathbb{M}^*} = \bigoplus_{i \in J} \bigoplus_{j \neq i} \eta_i I^*_{ij} \in \GL(\mathbb{M}^*),
\end{equation}
where $I^*_{ij} \coloneqq \Id_{\Hom(W^j,W^i)^*}$.

We will now compute the numbers $\eta_i$ with a case-by-case discussion.

\begin{example*}[switching between generic forms]
Assume first that $\alpha$ is nonsingular and that \mbox{$a \neq ca_j$} for all $j \in J$. Then $\{a_j\}_j$ and $\{a_j.g\}_j$ are both contained in $\mathbb{C}$, and the matrices~$M$ and~$g.M$ can both be put in generic normal forms~\eqref{eq:normal_generic_form} by suitable matrices~$N$,~$N'$. As explained above $N = \alpha^{-1}$, and then $N' = (a\alpha + c\beta)^{-1}$ by looking at~\eqref{eq:action_on_matrices}. Hence the action of~$g$ multiplies~$\gamma$ on the left by
\begin{align*}
 E &= (a\alpha + c\beta)^{-1}\alpha = \diag\left(\frac{\alpha_1}{a\alpha_1 + c\beta_1},\dotsc,\frac{\alpha_n}{a\alpha_n + c\beta_n}\right) \\
 &= \diag\left(\left(\frac{a\alpha_1 + c\beta_1}{\alpha_1}\right)^{-1},\dotsc,\left(\frac{a\alpha_n + c\beta_n}{\alpha_n}\right)^{-1}\right) = \bigoplus_{i \in J} (a - ca_i)^{-1}\Id_{W^i}.
\end{align*}
So $\eta_i = (a - ca_i)^{-1} \in \mathbb{C}^{\times}$ in this case.
\end{example*}

\begin{example*}[from a generic form to a degenerate one]
Assume now $\alpha$ to be nonsingular, but that $a = ca_j$ for some (unique) $j \in J$. Then $\{a_j\}_j$ is contained in $\mathbb{C}$, and $a_j.g = \infty$. Decompose $V = W^j \oplus U^j$, with $U^j = \bigoplus_{i \neq j} W^i$, and accordingly $\alpha = \left(\begin{smallmatrix}
				 \alpha_I & \\
				 & \alpha_{II}
				 \end{smallmatrix}\right)$, $\beta = \left(\begin{smallmatrix}
							 \beta_I & \\
							 & \beta_{II}
							 \end{smallmatrix}\right)$.
Then after the action the space $W^j$ becomes $W^{\infty}$, $U^j$ becomes $U^{\infty}$ and $\alpha$, $\beta$ become
\begin{equation}\label{eq:generic_to_degenerate}
 \alpha' = \begin{pmatrix}
 0 & \\
 & a\alpha_{II} + c\beta_{II}
 \end{pmatrix}, \qquad \beta' = \begin{pmatrix}
					 b\alpha_I + d\beta_I \\
					 & b\alpha_{II} + d\beta_{II}
					 \end{pmatrix}.
\end{equation}
Thus the normal forms are
\begin{equation*}
 NM = \Id_V \partial - A z - (B + T),
\end{equation*}
before the action, and
\begin{equation*}
 N'(g.M) = \begin{pmatrix}
 0 & \\
 & \Id_{U^{\infty}}
 \end{pmatrix} \partial + \begin{pmatrix}
				 \Id_{W^{\infty}} & \\
				 & -A'
				 \end{pmatrix} z - \begin{pmatrix}
							T^{\infty} & P \\
							Q & B' + T'
						 \end{pmatrix}
\end{equation*}
after the action, where $A' = \bigoplus_{i \neq j} (a.g_i) \Id_{W^i}$.

Here again $N = \alpha^{-1}$, but $N' = \left(\begin{smallmatrix}
 (b\alpha_I + d\beta_I)^{-1} & \\
 & (a\alpha_{II} + c\beta_{II})^{-1}
 \end{smallmatrix}\right)$, by looking at~\eqref{eq:generic_to_degenerate}. Hence on the whole
\begin{equation*}
 E = \begin{pmatrix}
 (b\alpha_I + d\beta_I)^{-1}\alpha_I & \\
 & (a\alpha_{II} + c\beta_{II})^{-1}\alpha_{II}
 \end{pmatrix},
\end{equation*}
and one finds
\begin{equation*}
 \eta_i = \begin{cases}
 (b - da_j)^{-1}, & i = j, \\
 (a - ca_i)^{-1}, & i \neq j.
 \end{cases}
\end{equation*}
Note that $b - da_j \neq 0$, since $a = ca_j$ and $b = da_j$ together imply $ad - bc = 0$.
\end{example*}
	
\begin{example*}[from a degenerate form to a generic one]
Assume here $\infty \in \bm{a}(J)$, $c \neq 0$ and $a \neq ca_j$ for all $j \neq \infty$. Then $\infty \not\in \bm{a}.g(J)$, and one can decompose again $V = W^{\infty} \oplus U^{\infty}$, $\alpha = \left(\begin{smallmatrix}
					 0 & \\
					 & \alpha_{II}
					 \end{smallmatrix}\right)$, $\beta = \left(\begin{smallmatrix}
							 \beta_I & \\
							 & \beta_{II}
							 \end{smallmatrix}\right)$. After the action the infinity is mapped to $-\frac{d}{c} \in \mathbb{C}$, and $\alpha$, $\beta$ transform to
\begin{equation*}
 \alpha' = \begin{pmatrix}
 c\beta_I & \\
 & a\alpha_{II} + c\beta_{II}
 \end{pmatrix}, \qquad \beta' = \begin{pmatrix}
					 d\beta_I & \\
					 & b\alpha_{II} + d\beta_{II}
					 \end{pmatrix}.
\end{equation*}
The normalisation matrices are thus
\begin{equation*}
 N = \begin{pmatrix}
 \beta_I^{-1} & \\
 & \alpha_{II}^{-1}
 \end{pmatrix}, \qquad N' = \begin{pmatrix}
				 (c\beta_I)^{-1} & \\
				 & (a\alpha_{II} + c\beta_{II})^{-1}
				\end{pmatrix},
\end{equation*}
so that
\begin{equation*}
 E = \begin{pmatrix}
 c^{-1}\Id_{W^{\infty}} & \\
 & (a\alpha_{II} + c\beta_{II})^{-1}\alpha_{II} \end{pmatrix}.
\end{equation*}
Then one has
\begin{equation*}
 \eta_i = \begin{cases}
 c^{-1}, &i = \infty, \\
 (a - ca_i)^{-1}, &i \neq \infty.
 \end{cases}
\end{equation*}
\end{example*}

\begin{example*}[switching between degenerate forms]
Assume again $\infty \in \bm{a}(J)$, and consider the following two exhaustive subcases:
\begin{enumerate}\itemsep=0pt
 \item[1)] $c = 0$ (whence $ad = 1$),
 \item[2)] $c \neq 0$ and $a = ca_j$ for some $j \in \infty$.
\end{enumerate}

Suppose first $c = 0$, and decompose again $V = W^{\infty} \oplus U^{\infty}$, $\alpha = \left(\begin{smallmatrix}
					 0 & \\
					 & \alpha_{II}
					 \end{smallmatrix}\right)$, $\beta = \left(\begin{smallmatrix}
							 \beta_I & \\
							 & \beta_{II}
							 \end{smallmatrix}\right)$. Then~$\alpha$,~$\beta$ become
\begin{equation*}
 \alpha' = \begin{pmatrix}
 0 & \\
 & a\alpha_{II}
 \end{pmatrix}, \qquad \beta' = \begin{pmatrix}
					 d\beta_I & \\
					 & b\alpha_{II} + d\beta_{II}
					 \end{pmatrix},
\end{equation*}
whence
\begin{equation*}
 N = \begin{pmatrix}
 \beta_I^{-1} & \\
 & \alpha_{II}^{-1}
 \end{pmatrix}, \qquad N' = \begin{pmatrix}
				(d\beta_I)^{-1} & \\
				& (a\alpha_{II})^{-1}
				\end{pmatrix}.
\end{equation*}
This yields the simple
\begin{equation*}
 E = \begin{pmatrix}
 a\Id_{W^{\infty}} & \\
 & d\Id_{U^{\infty}}
 \end{pmatrix},
\end{equation*}
using $a^{-1} = d$; thus in this case
\begin{equation*}
 \eta_i = \begin{cases}
 a, & i = \infty, \\
 d, & i \neq \infty.
 \end{cases}
\end{equation*}

Suppose finally $c \neq 0$ and $a = ca_j$ for some $j \in J$. Consider then the finer decomposition $V = W^{\infty} \oplus W^j \oplus U^{\infty,j}$, where $U^{\infty,j} \coloneqq \bigoplus_{i \in J \setminus \{\infty,j\}} W^i$, and accordingly
\begin{equation*}
 \alpha = \begin{pmatrix}
 0 & & \\
 & \alpha_{II} & \\
 & & \alpha_{III}
 \end{pmatrix}, \qquad \beta = \begin{pmatrix}
					 \beta_I & & \\
					 & \beta_{II} & \\
					 & & \beta_{III}
					\end{pmatrix}.
\end{equation*}
After transforming, the roles of $W^{\infty}$ and $W^j$ are swapped; moreover
\begin{equation*}
 \alpha' = \begin{pmatrix}
 c\beta_I & & \\
 & 0 & \\
 & & a\alpha_{III} + c\beta_{III}
 \end{pmatrix}, \qquad \beta' = \begin{pmatrix}
					 d\beta_I & & \\
					 & b\alpha_{II} + d\beta_{II} & \\
					 & & b\alpha_{III} + d\beta_{III}
					 \end{pmatrix},
\end{equation*}
since indeed $W^j = \Ker(\alpha')$. The normalisation matrices are
\begin{equation*}
 N = \begin{pmatrix}
 \beta_I^{-1} & & \\
 & \alpha_{II}^{-1} & \\
 & & \alpha_{III}^{-1}
 \end{pmatrix}, \qquad N' = \begin{pmatrix}
				 (c\beta_I)^{-1} & & \\
				 & (b\alpha_{II} + d\beta_{II})^{-1} & \\
				 & & (a\alpha_{III} + c\beta_{III})^{-1}
				\end{pmatrix},
\end{equation*}
whence
\begin{equation*}
 E = \begin{pmatrix}
 c^{-1} & & \\
 & (b\alpha_{II} + d\beta_{II})^{-1}\alpha_{II} & \\
 & & (a\alpha_{III} + c\beta_{III})^{-1}\alpha_{III}
 \end{pmatrix}.
\end{equation*}

Therefore
\begin{equation*}
 \eta_i = \begin{cases}
 c^{-1}, & i = \infty, \\
 (b - da_j)^{-1}, & i = j, \\
 (a - ca_i)^{-1}, & i \not\in \{\infty,j\}.
 \end{cases}
\end{equation*}
\end{example*}

Denoting $\mathbf{A} \coloneqq \big\{\{a_j\}_{j \in J} \,|\, a_j \neq a_k \text{ for } j \neq k \big\} \subseteq (\mathbb{C} \cup \{\infty\})^J$ the space of possible embeddings $\bm{a} \colon J \hookrightarrow \mathbb{C} \cup \{\infty\}$, the matrices $E$ defined by~\eqref{eq:action_on_fibration} assemble into a function
\begin{equation*}
 E \colon \ \SL_2(\mathbb{C}) \times \mathbf{A} \longrightarrow \GL(V),
\end{equation*}
satisfying the cocycle identities
\begin{equation*}
 E(gg',\bm{a}) = E(g',\bm{a}.g) \cdot E(g,\bm{a}), \qquad \text{for} \quad g, g' \in \SL_2(\mathbb{C}), \  \bm{a} \in \mathbf{A},
\end{equation*}
where $\bm{a}.g$ is the action~\eqref{eq:action_on_embedding} on the embedding. Similarly the numbers $\eta_i$ of~\eqref{eq:action_on_fibration} become functions $\eta_i \colon \SL_2(\mathbb{C}) \times \mathbf{A} \to \mathbb{C}^{\times}$ which satisfy
\begin{equation}\label{eq:action_cocycle}
 \eta_i(gg',\bm{a}) = \eta_i(g',\bm{a}.g) \cdot \eta_i(g,\bm{a}), \qquad \text{for} \quad i \in J.
\end{equation}

\begin{example*}[generic case]
If $\infty \not\in \bm{a}(J) \cup \bm{a}.g(J)$ and if one writes $g' = \left(\begin{smallmatrix}
 a' & b' \\
 c' & d'
 \end{smallmatrix}\right) \in \SL_2(\mathbb{C})$ then the first case of~\eqref{eq:action_on_fibration} yields
\begin{equation*}
 \eta_i(gg',\bm{a}) = (aa' + bc') - (ca' + dc')a_i, \qquad \eta_i(g,\bm{a}) \cdot \eta_i(g',\bm{a}.g) = (c - c a_i) \cdot (a' - c'a_i.g),
\end{equation*}
and in this generic case~\eqref{eq:action_cocycle} follows from
\begin{equation*}
 a_i.g = -\frac{b - da_i}{a - c a_i} = -\frac{b\alpha_i + d\beta_i}{a\alpha_i + c\beta_i},
\end{equation*}
using $a_i = -\frac{\beta_i}{\alpha_i} \in \mathbb{C}$.
\end{example*}

\begin{example*}[Harnad duality]\label{example:harnad_duality}

It should helpful to the reader to work out the particular case of the $\SL_2(\mathbb{C})$-action which corresponds to the Harnad duality for rational differential operators~\cite{harnad_1994_dual_isomonodromic_deformations_and_moment_maps_to_loop_algebras}, as mentioned in the Introduction.

Consider then the particular case of a degenerate form~\eqref{eq:normal_degenerate_form} with $A = B = T = 0$, that is $\bm{a}(J) = \{\infty,0\}$ and
\begin{equation*}
M = \begin{pmatrix}
 0 & \\
 & \Id_U
 \end{pmatrix} \partial + \begin{pmatrix}
 \Id_W & \\
 & 0
 \end{pmatrix} z - \begin{pmatrix}
 T^{\infty} & P \\
 Q & 0
 \end{pmatrix},
\end{equation*}
where $V = W \oplus U$ with $U \coloneqq U^{\infty}$ and $W \coloneqq W^{\infty}$. This differential operator acts on a local $V$-valued holomorphic function $v = v(z)$ as
\begin{equation*}
 Mv = \begin{pmatrix}
 (z - T^{\infty}) w - P u \\
 \partial_z u - Q w
 \end{pmatrix},
\end{equation*}
in the vector decomposition $v(z) = \left(\begin{smallmatrix}
 w(z) \\
 u(z)
 \end{smallmatrix}\right) \in W \oplus U$. Then the equation $Mv = 0$ reads
\begin{equation*}
 \begin{cases}
 Pu = (z - T^{\infty})w, \\
 \partial_z u = Qw,
 \end{cases}
\end{equation*}
which can be turned into the following system of ordinary, first-order linear differential equations for the component $u$ of $v$:
\begin{equation*}
 \partial_z u = Q(z - T^{\infty})^{-1} P u.
\end{equation*}
If $W = \bigoplus_i V_i$ is the eigenspace decomposition of $T^{\infty}$ then one has $T^{\infty} = \bigoplus_i t_i \Id_{V_i}$, and
\begin{equation*}
 Q(z - T^{\infty})^{-1}P = \sum_i \frac{R_i}{z - t_i},
\end{equation*}
where $R_i \coloneqq Q_iP_i$ and $Q_i \colon V_i \rightleftarrows U \ {\colon}\! P_i$ are components of~$Q$ and~$P$.

Hence in this case one recognises a Fuchsian system on the sphere, which can be extended to a logarithmic connection on the trivial holomorphic vector bundle $U \times \mathbb{C}P^1 \to \mathbb{C}P^1$. In general there is an identification with meromorphic connections on the Riemann sphere with irregular singularities at infinity (of Poincar\'e rank~2 if $A \neq 0$, see Section~\ref{sec:slims}).

Now choose the element $g = \left(\begin{smallmatrix}
 0 & 1 \\
 -1 & 0
 \end{smallmatrix}\right) \in \SL_2(\mathbb{C})$, which acts on the generators of the Weyl algebra as $\left(\begin{smallmatrix}
 \partial \\
 z
 \end{smallmatrix}\right) \mapsto \left(\begin{smallmatrix}
 z \\
 -\partial
 \end{smallmatrix}\right)$, i.e., as the Fourier--Laplace transform.
Acting this way on the differential operator $M$ and permuting the direct summands $U$ and $W$ yields
\begin{equation*}
 g.M = \begin{pmatrix}
 0 & \\
 & -\Id_W
 \end{pmatrix} \partial + \begin{pmatrix}
 \Id_U & \\
 & 0
 \end{pmatrix} z - \begin{pmatrix}
 0 & Q \\
 P & T^{\infty}
 \end{pmatrix},
\end{equation*}
so the roles of $U$ and $W$ are swapped. The new degenerate normal form is achieved by changing the sign, i.e., via the diagonal matrix $N' = \left(\begin{smallmatrix}
 \Id_U & \\
 & -\Id_W
 \end{smallmatrix}\right)$. In this case $N = \Id_V$, as $M$ was already taken in normal form, and indeed the overall action reads
\begin{equation*}
 g \colon \ Q \longmapsto -Q, \qquad P \longmapsto P, \qquad T^{\infty} \longmapsto -T^{\infty},
\end{equation*}
as prescribed by~\eqref{eq:action_on_fibration}.

The new equations are $Qw = zu$ and $\partial w = T^{\infty}w - Pu$, which can be expressed for the component $w$ as
\begin{equation*}
 \partial w = \left(T^{\infty} + \frac{R}{z}\right) w,
\end{equation*}
where $R \coloneqq - PQ$. Hence the Harnad-dual of the differential operator ${\rm d} - Q(z - T^{\infty})^{-1} P\,{\rm d}z$ appears as claimed, and it can be thought as a connection with a pole of order two at infinity and a simple pole at zero on the vector bundle $W \times \mathbb{C}P^1 \to \mathbb{C}P^1$.
\end{example*}

The duality of Remark~\ref{example:harnad_duality} thus corresponds to one element inside a 3-dimensional complex group acting on a space of presentation for Weyl algebra modules.

\subsection{Symplectic structure and symplectic action}\label{sec:symplectic_classical_action}

Recall from~\cite{boalch_2012_simply_laced_isomonodromy_systems} that the space
\begin{equation*}
 \mathbb{M} = \bigoplus_{i \neq j \in J} \Hom\big(W^i,W^j\big)
\end{equation*}
is endowed with a complex symplectic structure $\omega_{\bm{a}}$ depending on the embedding $\bm{a} \in \mathbf{A}$. Its formula reads
\begin{equation}\label{eq:symplectic_form}
 \omega_{\bm{a}} = \sum_{i \neq j \in J \setminus \{\infty\}} \frac{\Tr\big({\rm d}B^{ij} \wedge {\rm d}B^{ji}\big)}{2(a_i - a_j)} + \sum_{i \neq \infty} \Tr\big({\rm d}Q^i \wedge {\rm d}P^i\big).
\end{equation}
Hence $\omega_{\bm{a}}$ only pairs nontrivially maps going in opposite directions. Moreover, it coincides with the canonical symplectic form on the subspace
\begin{equation*}
 \bigoplus_{i \neq \infty} \big(\Hom\big(W^{\infty},W^i\big) \oplus \Hom\big(W^i,W^{\infty}\big)\big) \cong T^*\left(\bigoplus_{i \neq \infty} \Hom\big(W^{\infty},W^i\big)\right),
\end{equation*}
where the dualities $\Hom\big(W^{\infty},W^i\big) \cong \Hom\big(W^i,W^{\infty}\big)^*$ are provided by the canonical nondegenerate trace pairing
\begin{equation*}
 \Hom\big(W^{\infty},W^i\big) \otimes \Hom\big(W^i,W^{\infty}\big) \longrightarrow \mathbb{C}, \qquad \big(Q^i,P^i\big) \longmapsto \Tr\big(Q^iP^i\big).
\end{equation*}

The space $\mathbb{M}$ does not depend on $\bm{a}$, but the symplectic form does. Thus an element $g \in \SL_2(\mathbb{C})$ defines a linear map $\varphi(\bm{a}) \colon (\mathbb{M},\omega_{\bm{a}}) \to (\mathbb{M},\omega_{\bm{a}.g})$ between symplectic spaces according to~\eqref{eq:action_on_fibration}, where $a.g$ is the right action~\eqref{eq:action_on_embedding} on the embedding.

\begin{Proposition}[{\cite[Proposition~3.1]{boalch_2012_simply_laced_isomonodromy_systems}}]\label{prop:equivariant_symplectic_form}
The map $\varphi(\bm{a}) \colon (\mathbb{M},\omega_{\bm{a}}) \to (\mathbb{M},\omega_{\bm{a}.g})$ is symplectic.
\end{Proposition}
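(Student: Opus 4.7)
The plan is to verify $\varphi(\bm{a})^* \omega_{\bm{a}.g} = \omega_{\bm{a}}$ by direct comparison of coefficients on each quadratic monomial. Since $\varphi(\bm{a}) \in \GL(\mathbb{M})$ is the block-diagonal linear map that scales every summand $\Hom\big(W^j, W^i\big)$ by the scalar $\eta_i = \eta_i(g, \bm{a}) \in \mathbb{C}^{\times}$ computed in Section~\ref{sec:action_on_weyl_modules}, the pull-back of each monomial $\Tr\big(dB^{ij} \wedge dB^{ji}\big)$ is exactly $\eta_i \eta_j \Tr\big(dB^{ij} \wedge dB^{ji}\big)$. The task thus reduces to matching, pair by pair, the scalar coefficients appearing in $\omega_{\bm{a}}$ and in $\varphi(\bm{a})^* \omega_{\bm{a}.g}$.

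For the generic situation---when neither index $i$ nor $j$ is mapped to $\infty$ under either $\bm{a}$ or $\bm{a}.g$---both forms contribute a $B$-type term with coefficients $1/(a_i - a_j)$ and $1/(a_i.g - a_j.g)$ respectively, so the identity to check is $\eta_i \eta_j (a_i - a_j) = a_i.g - a_j.g$. Substituting $\eta_k = (a - c a_k)^{-1}$ and the M\"obius formula $a_k.g = -(b - d a_k)/(a - c a_k)$, the numerator of $a_i.g - a_j.g$ expands to $(ad - bc)(a_i - a_j) = a_i - a_j$, so the identity drops out immediately.

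When an index is moved into or out of $\infty$ by the action, the corresponding summand switches from a $B$-type term (coefficient $1/(a_i - a_j)$) to a $QP$-type term (coefficient $1$), and one verifies $\eta_i \eta_{j_0} = 1/(a_i - a_{j_0})$, where $j_0$ is the index sent to $\infty$. This follows from $\eta_{j_0} = -c$---which in turn holds since $a = c a_{j_0}$ combined with $ad - bc = 1$ forces $b - d a_{j_0} = -1/c$---together with $\eta_i = 1/(c(a_{j_0} - a_i))$. The degenerate-to-degenerate case splits further: either $\infty$ is fixed (so $c = 0$ and $\eta_{\infty} \eta_i = ad = 1$), or $\infty$ is swapped with a finite index, in which case $\eta_{\infty_J} \eta_{j_0} = c^{-1} \cdot (-c) = -1$, the extra sign compensating the swap between the $(Q, P)$ conventions used on the source and on the target.

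The \emph{main obstacle} is the bookkeeping across the four cases of Section~\ref{sec:action_on_weyl_modules}, since $\omega_{\bm{a}}$ treats the index at $\infty$ asymmetrically (via the canonical pairing instead of a M\"obius-type difference) and one must carefully track how each summand is reclassified under $\bm{a} \to \bm{a}.g$. A conceptual alternative is to verify the statement on a generating set of $\SL_2(\mathbb{C})$---translations, diagonal scalings, and the Fourier--Laplace involution: the first two fix $\infty$ and produce multiplicative factors that cancel against the corresponding rescaling of the differences $a_i - a_j$, and the third is essentially the one-dimensional computation already carried out in Example~\ref{example:harnad_duality}.
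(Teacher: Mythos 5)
Your proposal follows essentially the same route as the paper: observe that $\varphi(\bm{a})$ acts on $\Hom\big(W^j,W^i\big)$ by the scalar $\eta_i$, so the pull-back of each quadratic term acquires $\eta_i\eta_j$, and reduce the problem to the scalar identity $\varepsilon_{ij}(\bm{a}) = \varepsilon_{ij}(\bm{a}.g)\,\eta_i\eta_j$ (the paper isolates this as Lemma~\ref{lemma:action_cocycle_symplectic}), verified by the same case analysis with the same algebraic identities. One small imprecision: your sentence about indices ``moved into or out of $\infty$'' states the identity $\eta_i\eta_{j_0} = 1/(a_i - a_{j_0})$, which is correct only when $j_0$ is moved \emph{into} $\infty$; in the ``out of $\infty$'' case $a_{j_0}$ is already $\infty$, so the right-hand side is meaningless, and the identity to verify is instead $\eta_i\eta_{\infty} = a_i.g - \infty.g$ (it can alternatively be recovered from the other case via $g \mapsto g^{-1}$, $\bm{a}\mapsto\bm{a}.g$, but you should say which route you mean).
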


We will prove this explicitly, and differently from~\cite{boalch_2012_simply_laced_isomonodromy_systems}, thereby checking the computations of the numbers $\eta_i$ of Section~\ref{sec:action_on_weyl_modules}. To this end write~\eqref{eq:symplectic_form} as
\begin{equation}\label{eq:symplectic_form_alternative}
 \omega_{\bm{a}} = \sum_{i \neq j} \frac{\varepsilon_{ij}(\bm{a})}{2} \Tr\big({\rm d}B^{ij} \wedge {\rm d}B^{ji}\big),
\end{equation}
where the (algebraic) functions $\varepsilon_{ij} \colon \mathbf{A} \to \mathbb{C}^{\times}$ are defined by $\varepsilon_{ij} + \varepsilon_{ji} = 0$, and
\begin{equation}\label{eq:symplectic_form_weights}
 \varepsilon_{ij}(\bm{a}) \coloneqq
\begin{cases}
 \dfrac{1}{a_i - a_j}, \quad & a_i,  a_j \neq \infty, \\
 1, &a_j = \infty.
\end{cases}
\end{equation}

\begin{proof}[Proof of Proposition~\ref{prop:equivariant_symplectic_form}]
One must show that $\varphi^*_g(\bm{a}) \omega_{\bm{a}.g} = \omega_{\bm{a}}$ for $g \in \SL_2(\mathbb{C})$ and $\bm{a} \in \mathbf{A}$, i.e.,
\begin{equation*}
 \sum_{i \neq j} \varepsilon_{ij}(\bm{a}) \Tr \big({\rm d}B^{ij} \wedge {\rm d}B^{ji}\big) = \sum_{i \neq j} \varepsilon_{ij}(\bm{a}.g) \varphi^*_g(\bm{a}) \Tr\big({\rm d}B^{ij} \wedge {\rm d}B^{ji}\big).
\end{equation*}

Now~\eqref{eq:classical_action_decomposition} gives an explicit formula for the pull-back along $\varphi(\bm{a})$, from which it follows that the pull-back of the 2-form $\Tr\big({\rm d}B^{ij} \wedge {\rm d}B^{ji}\big)$ on $\mathbb{M}$ equals
\begin{equation*}
 \varphi^*_g(\bm{a}) \Tr\big({\rm d}B^{ij} \wedge {\rm d}B^{ji}\big) = \eta_i(g,\bm{a}) \cdot \eta_j(g,\bm{a}) \Tr\big({\rm d}B^{ij} \wedge {\rm d}B^{ji}\big).
\end{equation*}
Hence Lemma~\ref{lemma:action_cocycle_symplectic} concludes the proof of Proposition~\ref{prop:equivariant_symplectic_form}.
\end{proof}

\begin{Lemma}\label{lemma:action_cocycle_symplectic}
The following identity holds for $i \neq j \in J$, $\bm{a} \in \mathbf{A}$ and $g \in \SL_2(\mathbb{C})$:
\begin{equation}
 \varepsilon_{ij}(\bm{a}) = \varepsilon_{ij}(\bm{a}.g) \cdot \eta_i(g,\bm{a}) \cdot \eta_j(g,\bm{a}).
\end{equation}
\end{Lemma}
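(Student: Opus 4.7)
The plan is a finite case analysis indexed by whether $\infty$ lies in $\bm{a}(\{i,j\})$ and in $(\bm{a}.g)(\{i,j\})$, matching the five cases used to compute $\eta_i$ in Section~\ref{sec:action_on_weyl_modules} and the definition of $\varepsilon_{ij}$ in~\eqref{eq:symplectic_form_weights}. Before starting I would observe that the claimed identity is antisymmetric in $(i,j)$, since $\varepsilon_{ij} = -\varepsilon_{ji}$ while the product $\eta_i\eta_j$ is symmetric; this permits me to assume, whenever one of $a_i, a_j$ equals $\infty$, that it is $a_i$. Since two distinct points cannot both be sent to $\infty$ by a single Möbius transformation, this leaves exactly five subcases.

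The generic subcase $a_i, a_j, a_i.g, a_j.g \in \mathbb{C}$ is the main computation. Here I would use the action formula $a_k.g = -(b - d a_k)/(a - c a_k)$ together with the $\SL_2$-relation $ad - bc = 1$ to obtain
$$a_i.g - a_j.g = \frac{(ad - bc)(a_i - a_j)}{(a - ca_i)(a - ca_j)} = \frac{a_i - a_j}{(a - ca_i)(a - ca_j)}.$$
The identity then follows at once, since $\varepsilon_{ij}(\bm{a}) = (a_i - a_j)^{-1}$ and $\eta_i\eta_j = (a - ca_i)^{-1}(a - ca_j)^{-1}$ in this regime. In each of the other four subcases (namely, $a_i \in \mathbb{C}$ sent to $\infty$; $a_i = \infty$ with $c \neq 0$ so $a_i.g \in \mathbb{C}$; $a_i = \infty$ fixed by $c = 0$; or $a_i = \infty$ while $a_j$ is sent to $\infty$), the appropriate $\eta$-values are read off from the corresponding Example of Section~\ref{sec:action_on_weyl_modules}, and the values of $\varepsilon$ collapse to $\pm 1$ whenever $\infty$ occurs among the indices. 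After substituting the constraint defining the subcase (one of $a = ca_i$, or $c = 0$ with $ad = 1$, or $a = ca_j$), the identity reduces in each case to a one-line manipulation whose final step is again $ad - bc = 1$.

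The main obstacle is purely organisational: enumerating the five subcases without overlap, consistently applying the antisymmetry of $\varepsilon$ to avoid redundant checks, and matching each transition between normal forms (generic-to-generic, generic-to-degenerate, degenerate-to-generic, degenerate-to-degenerate) with the correct $\eta$-formula from Section~\ref{sec:action_on_weyl_modules}. Once the bookkeeping is in place, no analytical or algebraic difficulty remains, since every subcase collapses to the unit determinant relation.
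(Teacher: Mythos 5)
Your proposal is correct and follows essentially the same case-by-case strategy as the paper: the same five subcases appear, and in every one the identity reduces to $ad - bc = 1$ exactly as in the paper's proof. The only difference is organizational — you index cases by the local data $(a_i, a_i.g, a_j, a_j.g)$ and exploit the antisymmetry $\varepsilon_{ij} = -\varepsilon_{ji}$ to halve the bookkeeping, whereas the paper indexes by the global normal-form transition, but this is legitimate since each $\eta_k$ depends only on $(a_k, a_k.g)$, and the resulting computations coincide.
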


\begin{Remark}
This proof of Proposition~\ref{prop:equivariant_symplectic_form} is given since Lemma~\ref{lemma:action_cocycle_symplectic} is of separate interest; it expresses the compatibility of the cocycles $\eta_i$ with the symplectic structures on $\mathbb{M}$, and will imply the compatibility of the $\SL_2(\mathbb{C})$-action with the comoment maps for the Hamiltonian reduction of $\mathbb{M}$ (classical in Section~\ref{sec:classical_hamiltonian_reduction}; quantum in Section~\ref{sec:quantum_hamiltonian_reduction}).
\end{Remark}

\begin{proof}[Proof of Lemma~\ref{lemma:action_cocycle_symplectic}]

Write $g = \left(\begin{smallmatrix}
 a & b \\
 c & d
 \end{smallmatrix}\right) \in \SL_2(\mathbb{C})$ and as usual $\bm{a}(j) = a_j \in \mathbb{C} \cup \{\infty\}$, and consider the same cases as in Section~\ref{sec:action_on_weyl_modules}, using the following identities where necessary (plus $ad - bc =1$):
\begin{alignat}{3}
& a_j.g= -\frac{b\alpha_j + d\beta_j}{a\alpha_j + c\beta_j}, \qquad && \text{if } a \neq ca_j,& \nonumber \\
& a_j.g= -\frac{b - da_j}{a - ca_j}, \qquad && \text{if further } a_j \neq \infty.& \label{eq:algebraic_identities}
\end{alignat}

\subsubsection*{Switching between generic forms}

Assume $a_j \neq \infty \neq a_j.g$ for $j \in J$. In this case one needs to verify that
\begin{equation*}
 a_i - a_j = (a - ca_i)(a - ca_j)(a.g_i - a_j.g).
\end{equation*}
Indeed using~\eqref{eq:algebraic_identities} the right-hand side becomes
\begin{gather*}
 (b - da_j)(a - ca_i) - (b - da_i)(a - ca_j)\\
 \qquad{} = ad(a_i - a_j) - bc(a_i - a_j)  = (ad - bc)(a_i - a_j) = a_i - a_j.
\end{gather*}

\subsubsection*{From a generic form to a degenerate one}

Assume $a_j.g = \infty$ and $a \neq ca_i$ for $i \neq j$. Then $B^{ij}$ and $B^{ji}$ become $B^{i \infty} = Q^i$ and $B^{\infty i} = P^i$ for $i \neq j$, respectively, and apart from the previous identities one must verify that
\begin{equation*}
 (a - ca_i)(b - da_j) = a_i - a_j.
\end{equation*}
Indeed using $a = ca_j$ the left-hand side equals
\begin{gather*}
 ab - ada_j  - bca_i + cd a_ia_j = bca_j - ada_j - bca_i + ada_i
  = (ad - bc)(a_i - a_j) = a_i - a_j.
\end{gather*}

\subsubsection*{From a degenerate form to a generic one}

Assume here $\infty \in \bm{a}(J)$, $c \neq 0$ and $a \neq ca_i$ for all $i \in J \setminus \{\infty\}$. Then $\infty.g = -\frac{d}{c}$, and if $j \coloneqq \bm{a}^{-1}(-d/c) \in J$ then $B^{i \infty} = Q^i$ and $B^{\infty i} = P^i$ become $B^{ij}$ and $B^{ji}$~-- respectively. Then apart from the previous identities one must show that
\begin{equation*}
 a.g_i + d/c = c^{-1}(a - ca_i)^{-1}.
\end{equation*}
Indeed using~\eqref{eq:algebraic_identities} the left-hand side becomes
\begin{equation*}
 \frac{d}{c} - \frac{b - da_i}{a - ca_i} = \frac{d(a - ca_i) - c(b - da_i)}{c(a - ca_i)} = \frac{ad - bc}{c(a - ca_i)} = c^{-1}(a - ca_i)^{-1}.
\end{equation*}

\subsubsection*{Switching between degenerate forms}

Assume first $\infty \in \bm{a}(J)$ and $c = 0$. Then $\infty.g = \infty$, and in this case the further identities required follow from $\eta_{\infty}(g,\bm{a}) \cdot \eta_i(g,\bm{a}) = ad = 1$ for $i \neq \infty$.

Assume finally $\infty \in \bm{a}(J)$, $c \neq 0$ and $a = ca_j$ for some (unique) $j \in J \setminus \{\infty\}$. Then one has $\infty.g = -d/c$ and $a_j.g = \infty$; also $Q^j$ and $P^j$ are swapped, whereas $B^{ij}$ and $B^{ji}$ are exchanged with $Q^i$ and $P^i$ for all $i \in J \setminus \{\infty,j\}$~-- respectively. Hence apart from the previous identities one must establish that
\begin{equation*}
 c(b - da_j) = -1,
\end{equation*}
which follows from $ca_j = a$.
\end{proof}

Thus the action of $\SL_2(\mathbb{C})$ on the embeddings $\bm{a} \colon J \hookrightarrow \mathbb{C} \cup \{\infty\}$ is lifted to an action on the symplectic vector bundle $\widetilde{\mathbb{M}} \to \mathbf{A}$, whose fibre over $\bm{a}$ is the space $(\mathbb{M},\omega_{\bm{a}})$. This action is explicitly given by the assignment $\varphi \colon \SL_2(\mathbb{C}) \times \mathbf{A} \to \GL(\mathbb{M})$ sending $(g,\bm{a})$ to $\varphi(\bm{a})$, together with the cocycle identities
\begin{equation}\label{eq:cocycle_classical_action}
 \varphi_{gg'}(\bm{a}) = \varphi_{g'}(\bm{a}.g) \circ \varphi(\bm{a}), \qquad \text{for} \quad g,g' \in \SL_2(\mathbb{C}), \  \bm{a} \in \mathbf{A},
\end{equation}
which follow from~\eqref{eq:action_cocycle}. In particular the inverse of $\varphi(\bm{a})$ is $\varphi_{g^{-1}}(\bm{a}.g) \colon (\mathbb{M},\omega_{\bm{a}.g}) \to (\mathbb{M},\omega_{\bm{a}})$.

\subsection{Action on classical algebras}\label{sec:action_on_algebras}

Consider the commutative Poisson algebra of polynomial functions on the complex vector space~$\mathbb{M}$~-- considered as an affine complex space:
\begin{equation*}
 A_0 \coloneqq \mathbb{C}[\mathbb{M}] \cong \Sym(\mathbb{M}^*).
\end{equation*}
The Poisson bracket is $\{\cdot,\cdot\}_{\bm{a}} = \omega_{\bm{a}}^{-1} \in \bigwedge^2 \mathbb{M}$, uniquely determined from its restriction to linear functions which defines a nondegenerate alternating bilinear form on $\mathbb{M}^*$. The commutative algebra structure of $A_0$ only depends on $\mathbb{M}$, whereas the Poisson structure also depends on the embedding $\bm{a}$.

As a corollary of Proposition~\ref{prop:equivariant_symplectic_form} and~\eqref{eq:cocycle_classical_action} one gets the following.

\begin{Proposition}\label{prop:classical_action}
The pull-back of polynomial functions along the $\SL_2(\mathbb{C})$-action defines isomorphisms of Poisson algebras
\begin{equation*}
 \varphi^*_g(\bm{a}) \colon \  (A_0,\{\cdot,\cdot\}_{\bm{a}.g}) \longrightarrow (A_0,\{\cdot,\cdot\}_{\bm{a}}),
\end{equation*}
satifying
\begin{equation*}
 \varphi^*_{gg'}(a) = \varphi^*_g(\bm{a}) \circ \varphi^*_{g'}(\bm{a}.g), \qquad \text{for}\quad g, g' \in \SL_2(\mathbb{C}), \  \bm{a} \in \mathbf{A}.
\end{equation*}
\end{Proposition}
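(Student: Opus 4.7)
The plan is to derive Proposition~\ref{prop:classical_action} as a direct corollary of Proposition~\ref{prop:equivariant_symplectic_form} together with the cocycle identity~\eqref{eq:cocycle_classical_action}, by transporting the linear symplectomorphisms of $\mathbb{M}$ to their transposes on the symmetric algebra $A_0 = \Sym(\mathbb{M}^*)$.

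First I would observe that, since $\varphi_g(\bm{a}) \in \GL(\mathbb{M})$ is a linear automorphism, its transpose $\varphi^*_g(\bm{a})$ is automatically a grading-preserving commutative algebra automorphism of $A_0$, with inverse the pull-back along $\varphi_{g^{-1}}(\bm{a}.g)$ constructed in Section~\ref{sec:symplectic_classical_action}. It then remains to check that this algebra isomorphism intertwines the two Poisson brackets, i.e., that $\{\varphi^*_g(\bm{a}) f, \varphi^*_g(\bm{a}) h\}_{\bm{a}} = \varphi^*_g(\bm{a}) \{f,h\}_{\bm{a}.g}$ holds for all $f,h \in A_0$. Since both $\{\cdot,\cdot\}_{\bm{a}}$ and $\{\cdot,\cdot\}_{\bm{a}.g}$ are biderivations uniquely determined by their restriction to $\mathbb{M}^* \otimes \mathbb{M}^*$, it suffices to verify the identity for linear generators $f,h \in \mathbb{M}^* \subseteq A_0$. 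On such generators the bracket is simply the evaluation of the bivector $\omega_{\bm{a}}^{-1} \in \bigwedge^2 \mathbb{M}$, and the required intertwining is the dual formulation of the symplectic identity $\varphi_g(\bm{a})^* \omega_{\bm{a}.g} = \omega_{\bm{a}}$ furnished by Proposition~\ref{prop:equivariant_symplectic_form}.

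The cocycle identity for the pull-backs then follows formally by applying the contravariant functor $(-)^*$ to~\eqref{eq:cocycle_classical_action}: since transposition reverses the order of composition, the relation $\varphi_{gg'}(\bm{a}) = \varphi_{g'}(\bm{a}.g) \circ \varphi_g(\bm{a})$ immediately yields $\varphi^*_{gg'}(\bm{a}) = \varphi^*_g(\bm{a}) \circ \varphi^*_{g'}(\bm{a}.g)$, matching the stated formula exactly.

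There is no substantial obstacle here: all the geometric content has already been absorbed into Proposition~\ref{prop:equivariant_symplectic_form}, so the argument reduces to bookkeeping. The only point meriting attention is to keep source and target Poisson structures straight, since a symplectomorphism of the form $(\mathbb{M},\omega_{\bm{a}}) \to (\mathbb{M},\omega_{\bm{a}.g})$ induces a Poisson algebra isomorphism in the opposite direction, namely $(A_0,\{\cdot,\cdot\}_{\bm{a}.g}) \to (A_0,\{\cdot,\cdot\}_{\bm{a}})$, in agreement with the statement being proved.
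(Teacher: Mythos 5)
Your proof is correct and takes the same route the paper intends: the paper simply declares Proposition~\ref{prop:classical_action} a corollary of Proposition~\ref{prop:equivariant_symplectic_form} and the cocycle identity~\eqref{eq:cocycle_classical_action} without spelling out the argument, and your write-up supplies exactly the bookkeeping (linearity of $\varphi_g(\bm{a})$, reduction to linear generators via the biderivation property, contravariance of transposition) that makes that declaration precise. The care you take in noting that a symplectomorphism $(\mathbb{M},\omega_{\bm{a}}) \to (\mathbb{M},\omega_{\bm{a}.g})$ yields a Poisson isomorphism in the opposite direction is the one point where the sign conventions could trip someone up, and you handle it correctly.
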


In conclusion, the action of $\SL_2(\mathbb{C})$ on the embeddings $a \colon J \hookrightarrow \mathbb{C} \cup \{\infty\}$ is lifted to an action on the bundle of Poisson algebras $\mathcal{A}_0 \to \mathbf{A}$ whose fibre over $a$ is by definition $(A_0,\{\cdot,\cdot\}_{\bm{a}})$. This is a trivial bundle of (commutative) associative algebras, which is not trivial as bundle of Poisson algebras, and the assignment $(g,\bm{a}) \mapsto \varphi^*_g(\bm{a})$ for $g \in \SL_2(\mathbb{C})$ and $\bm{a} \in \mathbf{A}$ defines an action on the total space which covers the one on the base.

This statement will be provided with a quantum analogue in the next section.

\section{Quantisation of the action}\label{sec:quantum_action}

\subsection{Filtered quantisation}\label{sec:filtered_quantisation}

The following material about the filtered quantisation of the commutative Poisson algebra $A_0 = \Sym(\mathbb{M}^*)$ is standard (see, e.g.,~\cite{schedler_2012_deformations_of_algebras_in_noncommutative_geometry}).

\begin{Definition}\label{def:weyl_algebra}
The Weyl algebra of the vector space $\mathbb{M}^*$ equipped with the alternating bilinear form $\{\cdot,\cdot\}_{\bm{a}} \colon \bigwedge^2 \mathbb{M}^* \to \mathbb{C}$ is the quotient
\begin{equation}\label{eq:weyl_algebra}
 A_{\bm{a}} = W(\mathbb{M}^*,\{\cdot,\cdot\}_{\bm{a}}) \coloneqq \Tens(\mathbb{M}^*) \big\slash I_1(\bm{a}),
\end{equation}
where $\Tens(\mathbb{M}^*)$ is the tensor algebra and $I_1(\bm{a}) \subseteq \Tens(\mathbb{M}^*)$ the two-sided ideal generated by the elements
\begin{equation*}
 f \otimes g - g \otimes f - \{f,g\}_a, \qquad \text{for} \quad f,g \in \mathbb{M}^*.
\end{equation*}
\end{Definition}

Hence $A_{\bm{a}}$ admits a set of algebra generators which do not depend on $\bm{a}$, but the relations among them do. In what follows we will drop the explicit dependence of $A_{\bm{a}}$ from $\bm{a}$ in the notation, but keep it for the noncommutative associative product $\ast_{\bm{a}} \colon A \otimes A \to A$.

\begin{Remark}Definition~\ref{def:weyl_algebra} is the dual description of the Weyl algebra of the symplectic vector space $(\mathbb{M},\omega_{\bm{a}})$, generalising the one-dimensional Weyl algebra $A_1$ of Section~\ref{sec:weyl_modules}. Using \eqref{eq:weyl_algebra} instead of $W(\mathbb{M},\omega_{\bm{a}})$ is preferred as one wants to quantise $\Sym(\mathbb{M}^*)$ rather then $\Sym(\mathbb{M})$, so this saves a canonical symplectic identification $\mathbb{M} \cong \mathbb{M}^*$.
\end{Remark}

To make~\eqref{eq:weyl_algebra} into a filtered quantisation of $(A_0,\{\cdot,\cdot\}_{\bm{a}})$ one must choose a grading on $A_0$ and a filtration on $A$, and provide an isomorphism $\gr(A) \cong A_0$ of graded (commutative) Poisson algebras.

\begin{Definition}The additive/Bernstein grading $\mathcal{B}_0$ on $A_0 = \Sym(\mathbb{M}^*)$ is defined by the family of subspaces
\begin{equation*}
 (\mathcal{B}_0)_k \coloneqq \Sym^k (\mathbb{M}^*).
\end{equation*}
\end{Definition}

This is the grading by the global degree of polynomial functions on $\mathbb{M}$. It corresponds to the quotient grading on $\Sym(\mathbb{M}^*) = \Tens(\mathbb{M}^*) \big\slash I_0$ induced by the additive grading
\begin{equation*}
 \Tens(\mathbb{M}^*) = \bigoplus_{k \geq 0} (\mathbb{M}^*)^{\otimes k},
\end{equation*}
where $I_0 \subseteq \Tens(\mathbb{M}^*)$ is the (homogeneous) two-sided ideal generated by elements
\begin{equation*}
 f \otimes g - g \otimes f, \qquad \text{for} \quad f,g \in \mathbb{M}^*.
\end{equation*}

\begin{Remark}A different grading is obtained by specifying a symplectic identification $\mathbb{M} = T^*\mathbb{L}$, where $\mathbb{L} \subseteq \mathbb{M}$ is a Lagrangian subspace. Then one can give degree zero to the linear coordinates on $\mathbb{L}$ (the positions) and degree one to the linear coordinates on the cotangent fibres $T^*_q \mathbb{L}$, where $q \in \mathbb{L}$ (the momenta).

This yields the geometric grading on $A_0$, which will not be used since the choice of a Lagrangian subspace is noncanonical and breaks the $\SL_2(\mathbb{C})$ symmetries (cf.\ Remark~\ref{remark:lagrangian_splitting}).
\end{Remark}

Similarly there is a quotient filtration on the Weyl algebra~\eqref{eq:weyl_algebra} modulo the nonhomogeneous ideal $I_1(\bm{a})$.

\begin{Definition}The additive/Bernstein filtration $\mathcal{B}$ on $A$ is the quotient of the filtration associated to the additive grading on $\Tens(\mathbb{M}^*)$, modulo the ideal $I_1(\bm{a})$.
\end{Definition}

This means the Bernstein filtration on $A$ is defined by the subspaces
\begin{equation*}
 \mathcal{B}_{\leq k} \coloneqq \pi_1 \Bigg(\bigoplus_{m \leq k} (\mathbb{M}^*)^{\otimes m}\Bigg) \subseteq A,
\end{equation*}
where $\pi_1 \colon \Tens(\mathbb{M}^*) \to A$ is the canonical projection.

Now a standard argument shows that the associated graded of the filtered associative algebra~$(A,\mathcal{B})$ is isomorphic to $(A_0,\mathcal{B}_0)$ as graded Poisson algebra, so that by definition $(A,\ast_{\bm{a}},\mathcal{B})$ is a~filtered quantisation of $(A_0,\{\cdot,\cdot\}_{\bm{a}},\mathcal{B}_0)$.

In more detail, the associated graded of $(A,\mathcal{B})$ is the graded vector space
\begin{equation*}
 \gr(A) \coloneqq \bigoplus_{k \geq 0} \mathcal{B}_{\leq k} \big\slash \mathcal{B}_{\leq k-1},
\end{equation*} with product defined on representatives. There are then canonical projections
\begin{equation*}
 \sigma_k \colon \ \mathcal{B}_{\leq k} \longrightarrow \mathcal{B}_{\leq k} \big\slash \mathcal{B}_{\leq k-1},
\end{equation*}
together with identifications $\mathcal{B}_{\leq k} \big\slash \mathcal{B}_{\leq k-1} \cong (\mathcal{B}_0)_k \subseteq A_0$,
and the compatibility with the Poisson bracket is expressed by the identity
\begin{equation*}
 \big\{\sigma_k(x),\sigma_l(y)\big\}_{\bm{a}} = \sigma_{k+l-2}\big(x \ast_{\bm{a}} y - y \ast_{\bm{a}} x\big), \qquad \text{for} \quad x \in \mathcal{B}_{\leq k}, \  y \in \mathcal{B}_{\leq l}.
\end{equation*}
Moreover there is a universal embedding $\mathbb{M}^* \hookrightarrow A$, provided by the fact that $I_1(\bm{a}) \cap \mathbb{M}^* = (0)$ inside $\Tens(\mathbb{M}^*)$.

An analogous construction can be carried out for a Lie algebra $\mathfrak{g}$. The symmetric algebra $\Sym (\mathfrak{g}) \cong \mathbb{C} [\mathfrak{g}^*]$ carries the Poisson--Lie bracket and the grading by the global degree of polynomial functions on $\mathfrak{g}^*$. Then the universal enveloping algebra $U(\mathfrak{g})$ equipped with the quotient of the filtration defined by the subspaces $\bigoplus_{m \leq k} \mathfrak{g}^{\otimes m} \subseteq \Tens(\mathfrak{g})$~-- for $k \geq 0$~-- is a filtered quantisation of $\Sym(\mathfrak{g})$ (this is one way of stating the Poincar\'e--Birkhoff--Witt theorem).

In what follows the filtration on $\mathfrak{g}$ will be denoted $\mathcal{B}_{\mathfrak{g}}$.

\subsection{Rees construction and deformation quantisation}\label{sec:rees_construction}

Filtered quantisation is a particular instance of deformation quantisation, as there is a universal construction to introduce a formal deformation parameter $\hslash$. This will be used to formalise the semiclassical limit, and it allows for the deformation of nonhomogeneous ideals.

\begin{Definition}\label{def:rees_algebra}The Rees algebra of the filtered associative algebra $(A,\mathcal{B})$ is the $\hslash$-graded ring
\begin{gather}\label{eq:rees_algebra}
 \Rees(A,\mathcal{B}) \coloneqq \bigoplus_{k \geq 0} \mathcal{B}_{\leq k} \cdot \hslash^k \subseteq A[\hslash].
\end{gather}
\end{Definition}

To allow for nonconverging power series one can embed the Rees algebra into
\begin{gather}\label{eq:completion_rees}
 \widehat{A} \coloneqq \Rees(A,\mathcal{B})\llbracket \hslash \rrbracket
 = \Bigg\{ \sum_{k \geq 0} f_k \cdot \hslash^k \bigg| f_k \in \mathcal{B}_{\leq k}, \, \lim_{k \longrightarrow +\infty} \big(k - \lvert f_k \rvert\big) = +\infty\Bigg\} \subseteq A\llbracket \hslash \rrbracket,
\end{gather}
where $\lvert x \rvert = \min \big\{k \in \mathbb{Z}_{\geq 0} \, \big\vert \, x \in \mathcal{B}_{\leq k}\big\}$ is the order of the element $x \in A$ (see~\cite{etingof_schiffmann_1998_lectures_on_quantum_groups}).

Then there is a surjective algebra morphism $\sigma \colon \widehat{A} \to \gr(A) \cong A_0$ defined by
\begin{equation}\label{eq:semiclassical_limit}
 \sigma \colon \ \sum_k f_k \cdot \hslash^k \longmapsto \sum_k \sigma_k(f_k).
\end{equation}
This map is well defined (that is, the sum on the right-hand is finite), vanishes on the ideal $\hslash\widehat{A}$, and is surjective. This yields an identification $\widehat{A} \slash \hslash\widehat{A} \cong \gr(A)$, meaning the topologically free $\mathbb{C}\llbracket \hslash \rrbracket$-algebra $\widehat{A}$ is a (one-parameter) formal deformation of $\gr(A) \cong A_0$. The compatibility with the Poisson bracket makes it into a formal deformation quantisation, and is expressed by the identity
\begin{equation*}
 \sigma\big([f,g] \cdot \hslash^{-2}\big) = \big\{\sigma(f),\sigma(g)\big\}_{\bm{a}} \in A_0, \qquad \text{for} \quad f,g \in \widehat{A}.
\end{equation*}
Indeed $\big[\mathcal{B}_{\leq k}, \mathcal{B}_{\leq l}\big] \subseteq \mathcal{B}_{k+l-2}$ implies that $[f,g] \in \hslash^2 \widehat{A}$ for $f,g \in \widehat{A}$.

Because of this, the morphism $\sigma$ is called the \textit{semiclassical limit}.

\begin{Remark}\label{remark:star_product}Equivalently, there is an isomorphism $\widehat{A} \cong A_0\llbracket \hslash \rrbracket$ of $\mathbb{C}\llbracket \hslash \rrbracket$-modules such that the product of two elements $f,g \in A_0$ expands in $\widehat{A}$ as
\begin{equation*}
 f \ast_{\bm{a}} g = \sum_{k \geq 0} c_{k,\bm{a}}(f,g) \cdot \hslash^k,
\end{equation*}
where the $c_{k,\bm{a}} \colon A_0 \otimes A_0 \to A_0$ are bilinear maps satisfying the conditions imposed by the associativity of the product of $\widehat{A}$.

In this notation the alternating bilinear map $\left.c_{2,\bm{a}}\right\vert_{A_0 \wedge A_0}$ is the Poisson structure $\{\cdot,\cdot\}_{\bm{a}}$ of $A_0$, whereas $\left.c_{1,\bm{a}}\right\vert_{A_0 \wedge A_0} = 0$ for all $\bm{a} \in \mathbf{A}$ (which is reminiscent of the fact that $\{\cdot,\cdot\}_{\bm{a}}$ is a $2$-shifted Poisson structure on $\gr(A) \cong A_0$).
\end{Remark}

\begin{Remark}A different viewpoint on the deformation parameter $\hslash$ is obtaining by tur\-ning~$I_1(\bm{a})$ into a homogeneous ideal inside the $\mathbb{C}\llbracket \hslash \rrbracket$-algebra $\Tens(\mathbb{M}^*)\llbracket \hslash \rrbracket$, which yields the definition of the homogenised Weyl algebra $W_{\hslash}(\mathbb{M}^*,\{\cdot,\cdot\}_{\bm{a}}) \subseteq A\llbracket \hslash \rrbracket$ (see~\cite[Section~2.6]{schedler_2012_deformations_of_algebras_in_noncommutative_geometry}). In what follows only the formulation with the Rees algebra~\eqref{eq:rees_algebra} will be used, for the sake of a uniform notation.
\end{Remark}

This material can be adapted to the universal enveloping algebra $U(\mathfrak{g})$ of a Lie algebra $\mathfrak{g}$: there is a (polynomial) Rees algebra $\Rees(U(\mathfrak{g}),\mathcal{B}_{\mathfrak{g}}) \subseteq U(\mathfrak{g})[\hslash]$ with an extension $\widehat{U}(\mathfrak{g}) \subseteq U(\mathfrak{g})\llbracket \hslash \rrbracket$ to power series following Definition~\ref{def:rees_algebra} and~\eqref{eq:completion_rees}, respectively.

\begin{Remark}
The construction of the universal simply-laced quantum connections of Section~\ref{sec:slqc} only involves polynomials in $\hslash$, but it is sometimes important to allow for power series. For example, a nonhomogeneous ideal deforming an ideal $I \subseteq \Sym(\mathfrak{g})$ may exist in $\widehat{U}(\mathfrak{g})$ and contain power series which do not converge for particular values of $\hslash$ (if such an ideal exists in the uncompleted Rees algebra this is, however, implied; cf.\ Remark~\ref{remark:orbit_quantisation} about the deformation of ideals vanishing on semisimple coadjoint orbits).
\end{Remark}

\subsection{Quantisation of linear symplectic actions}\label{sec:quantisation_of_action}

The following abstract fact will be essential in what follows. Let $(V,\omega_V)$ and $(U,\omega_U)$ be finite-dimensional symplectic vector spaces, denote $\{\cdot,\cdot\}_V$ and $\{\cdot,\cdot\}_U$ the Poisson brackets associated to $\omega_V$ and $\omega_U$~-- respectively~-- and suppose $\varphi \colon V \to U$ is a linear Poisson map.

\begin{Lemma}[Contravariant functoriality]\label{lemma:linear_functoriality}
There exists a unique morphism of associative algebras
\begin{equation*}
 \widehat{\varphi}^* \colon \ W\big(U^*,\{\cdot,\cdot\}_U\big) \longrightarrow W(V^*,\{\cdot,\cdot\}_V),
\end{equation*}
whose associated graded equals the pull-back $\varphi^* \colon \Sym(U^*) \to \Sym(V^*)$ along $\varphi$. It is defined by the equality $\widehat{\varphi}^* \circ \pi_1 = \pi_1 \circ \widetilde{\varphi}^*$, where $\widetilde{\varphi}^* \colon \Tens(U^*) \to \Tens(V^*)$ is the morphism associated to the restriction of the pull-back to linear maps. Moreover the association $\varphi \mapsto \widehat{\varphi}^*$ is compatible with composition.
\end{Lemma}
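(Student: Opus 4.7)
The strategy is to construct $\widehat{\varphi}^*$ by descending the tensor-algebra morphism $\widetilde{\varphi}^*\colon \Tens(U^*)\to\Tens(V^*)$ extending the linear pullback $\varphi^*\colon U^*\to V^*$ to the Weyl-algebra quotients, and then to read off the remaining statements directly from this definition. Uniqueness is immediate: the Weyl algebra $W(U^*,\{\cdot,\cdot\}_U)$ is generated as an associative algebra by $\pi_1(U^*)$, so any morphism out of it is pinned down by its values on linear generators, which in turn are specified by the condition $\widehat{\varphi}^*\circ \pi_1 = \pi_1\circ \widetilde{\varphi}^*$.

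The only genuine verification is that $\widetilde{\varphi}^*$ sends the Weyl ideal on $\Tens(U^*)$ into the one on $\Tens(V^*)$, so that the quotient morphism is well defined. Generators of the former have the form $f\otimes g - g\otimes f - \{f,g\}_U$ with $f,g\in U^*$, and their image under $\widetilde{\varphi}^*$ reads
\begin{equation*}
\varphi^*f\otimes\varphi^*g - \varphi^*g\otimes\varphi^*f - \{f,g\}_U.
\end{equation*}
Since $\{f,g\}_U\in\mathbb{C}$ is a scalar, it is fixed by $\widetilde{\varphi}^*$; and the Poisson hypothesis on $\varphi$ applied to the linear functions $f,g$ yields precisely $\{\varphi^*f,\varphi^*g\}_V = \{f,g\}_U$. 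Hence the displayed expression is a generator of the Weyl ideal on $\Tens(V^*)$, and the required ideal inclusion holds.

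For the associated-graded assertion, observe that $\widetilde{\varphi}^*$ respects the additive grading on tensor algebras and that the Weyl ideals on both sides are compatible with the Bernstein filtration, so $\widehat{\varphi}^*$ is filtered. The induced graded morphism $\Sym(U^*)\to\Sym(V^*)$ coincides with $\varphi^*$ on degree-one generators and, because symmetric algebras are generated in degree one, it equals the full symmetric-algebra pullback of $\varphi$. Compositional functoriality is then inherited from the tensor-algebra level: for another linear Poisson map $\psi\colon U\to T$, the identity $(\psi\circ\varphi)^* = \varphi^*\circ\psi^*$ on linear functions extends to $\widetilde{\psi\circ\varphi}^* = \widetilde{\varphi}^*\circ \widetilde{\psi}^*$ and descends to $\widehat{\psi\circ\varphi}^* = \widehat{\varphi}^*\circ \widehat{\psi}^*$. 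The only nonformal step is the ideal-preservation check, which is exactly where the Poisson hypothesis on $\varphi$ is used; everything else follows from the universal property of the tensor algebra and general properties of filtered algebras.
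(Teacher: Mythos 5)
Your proof is correct and follows essentially the same route as the paper: construct $\widetilde{\varphi}^*$ on the tensor algebra, verify that it sends the Weyl ideal into the Weyl ideal using the Poisson property of $\varphi$ on linear forms, and then read off the filtered/graded compatibility and compositional functoriality. The only cosmetic difference is that you establish the associated-graded identity by the general degree-one-generation argument, where the paper spells out the monomial computation explicitly; both are valid and interchangeable.
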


\begin{proof}The pull-back of linear functions yields a linear map $\varphi^* \colon U^* \to V^*$, since $\varphi$ itself is linear, so $\widetilde{\varphi}^*$ is the unique morphism of associative algebras defined on monomials by
\begin{equation*}
 \widetilde{\varphi}^*\Big(\bigotimes_i f_i\Big) = \bigotimes_i \varphi^* f_i, \qquad \text{where} \quad f_i \in U^* \text{ for } i = 1, \dotsc, n.
\end{equation*}

By the universal property of quotients it is then enough to show that $\widetilde{\varphi}(I_{1,U}) \subseteq I_{1,V}$ inside $\Tens(V^*)$, where $I_{1,U}$ and $I_{1,V}$ are the nonhomogenous ideals defining the Weyl algebras as in~\eqref{eq:weyl_algebra}. But this follows from the fact that $\varphi$ is Poisson, that is from the identity
\begin{equation*}
 \varphi^*\{f,g\}_U = \{\varphi^*f,\varphi^*g\}_V, \qquad \text{for} \quad f,g \in \mathbb{C}[U] \cong \Sym(U^*).
\end{equation*}

Next one must prove that the associated graded map of the induced morphism $\widehat{\varphi}^*$ on the Weyl algebras coincide with the pull-back $\varphi^*$, i.e., that $\sigma_k \circ \widehat{\varphi}^* = \varphi^* \circ \sigma_k$ on all filtration spaces.

By linearity it is enough to show that
\begin{equation*}
 \sigma_k \circ \pi_1 \circ \widetilde{\varphi}^* \bigg(\bigotimes_i X_i\bigg) = \varphi^* \circ \sigma_k \bigg(\prod_i \widehat{X_i}\bigg),
\end{equation*}
for all $k \geq 0$ and all $X_i \in U^*$ such that $\bigotimes_i X_i$ is homogeneous of degree $k$, where we denote $\pi_1(\bigotimes_i X_i) = \prod_i \widehat{X_i}$ the product in the Weyl algebra. The left-hand side equals $\sigma_k\left(\prod_i \widehat{\varphi^*X_i}\right)$, which is the function $\prod_i \varphi^*X_i \in \Sym(U^*)$, in the identification $\Sym(U^*) \cong \gr \big(W(U^*,\{\cdot,\cdot\}_U)\big)$. This is equal to the right-hand side since
\begin{equation*}
 \prod_i \varphi^*X_i = \varphi^*\bigg(\prod_i X_i \bigg) = \varphi^* \circ \sigma_k\bigg(\prod_i \widehat{X}_i \bigg).
\end{equation*}
Finally, the compatibility with the composition follows from uniqueness. If $\psi \colon W \to V$ is another morphism of linear Poisson spaces then $\widehat{\psi^* \circ \varphi^*}$ and $\widehat{\psi}^* \circ \widehat{\varphi}^*$ are both defined, and since their associated graded both coincide with $\psi^* \circ \varphi^*$ then they must be equal.
\end{proof}

The morphisms of Lemma~\ref{lemma:linear_functoriality} also preserve the Bernstein filtration, hence induces a morphism of Rees algebras~-- or the formal power series version~-- by $\mathbb{C} \llbracket \hslash \rrbracket$-linearity:
\begin{equation*}
 \widehat{\varphi}^*_{\hslash} \colon \sum_k f_k \hslash^k \longmapsto \sum_k \widehat{\varphi}^*(f_k) \hslash^k.
\end{equation*}
This is the unique morphism satisfying $\sigma \circ \widehat{\varphi}^* = \varphi^* \circ \sigma$, i.e., such that $\widehat{\varphi}^*_{\hslash} = \varphi^* \pmod \hslash$.

Lemma~\ref{lemma:linear_functoriality} can be used to conclude that there is a quantum $\SL_2(\mathbb{C})$-action lifting the classical action of Section~\ref{sec:action_on_algebras}. Choose an element $g \in \SL_2(\mathbb{C})$ and fix an embedding $\bm{a} \in \mathbf{A}$.

\begin{Theorem}\label{theorem:quantum_action}There exists a unique isomorphism of $\mathbb{C}\llbracket \hslash \rrbracket$-algebras $\widehat{\varphi}^*_{g,\hslash}(\bm{a}) \colon\! \big(\widehat{A},\ast_{\bm{a}.g}\big) \!\to\! \big(\widehat{A},\ast_{\bm{a}}\big)$ which intertwines the pull-back $\varphi^*_g(\bm{a}) \colon (A_0,\{\cdot,\cdot\}_{\bm{a}.g}) \to (A_0,\{\cdot,\cdot\}_{\bm{a}})$ with the semiclassical limits:
\[
\begin{tikzpicture}
 \node (a) at (0,0) {$\big(\widehat{A},\ast_{\bm{a}.g}\big)$};
 \node (b) at (4,0) {$\big(\widehat{A},\ast_{\bm{a}}\big)$};
 \node (c) at (0,-2) {$(A_0,\{\cdot,\cdot\}_{\bm{a}.g})$};
 \node (d) at (4,-2) {$(A_0,\{\cdot,\cdot\}_{\bm{a}})$.};
 \node (e) at (2,-1) {$\circlearrowleft$};
\path
 (a) edge node[above]{$\widehat{\varphi}^*_{g,\hslash}(\bm{a})$} (b)
 (a) edge node[left]{$\sigma$} (c)
 (b) edge node[right]{$\sigma$} (d)
 (c) edge node[below]{$\varphi^*_g(\bm{a})$} (d);
\end{tikzpicture}
\]
Moreover if $g' \in \SL_2(\mathbb{C})$ is another element then $\widehat{\varphi}^*_{gg',\hslash}(\bm{a}) = \widehat{\varphi}^*_{g,\hslash}(\bm{a}) \circ \widehat{\varphi}^*_{g',\hslash}(\bm{a}.g)$.
\end{Theorem}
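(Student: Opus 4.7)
The plan is to reduce the statement directly to the contravariant functoriality Lemma~\ref{lemma:linear_functoriality}, applied to the classical symplectic action of Section~\ref{sec:symplectic_classical_action}, and then extend to the $\hslash$-adic completion. By Proposition~\ref{prop:equivariant_symplectic_form} the map $\varphi(\bm{a}) \colon (\mathbb{M},\omega_{\bm{a}}) \to (\mathbb{M},\omega_{\bm{a}.g})$ is a linear symplectomorphism, hence in particular a linear Poisson map between symplectic vector spaces. Therefore Lemma~\ref{lemma:linear_functoriality} applies and yields a unique morphism of associative algebras $\widehat{\varphi}^*_g(\bm{a}) \colon (A,\ast_{\bm{a}.g}) \to (A,\ast_{\bm{a}})$ whose associated graded is the classical pull-back $\varphi^*_g(\bm{a}) \colon (A_0,\{\cdot,\cdot\}_{\bm{a}.g}) \to (A_0,\{\cdot,\cdot\}_{\bm{a}})$. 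Since this morphism preserves the Bernstein filtration by construction, it extends $\mathbb{C}\llbracket\hslash\rrbracket$-linearly to a morphism on the Rees algebras, and then to the completion $\widehat{A} \subseteq A\llbracket\hslash\rrbracket$ by the formula $\sum_k f_k\hslash^k \mapsto \sum_k \widehat{\varphi}^*_g(\bm{a})(f_k)\hslash^k$, exactly as recorded just after the proof of Lemma~\ref{lemma:linear_functoriality}. The resulting $\widehat{\varphi}^*_{g,\hslash}(\bm{a})$ satisfies $\sigma \circ \widehat{\varphi}^*_{g,\hslash}(\bm{a}) = \varphi^*_g(\bm{a}) \circ \sigma$ by the same computation that gave this identity for $\widehat{\varphi}^*_g(\bm{a})$; this is the commutativity of the stated diagram.

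Next I would establish uniqueness and the isomorphism property in one go. Uniqueness at the level of $\mathbb{C}\llbracket\hslash\rrbracket$-algebras follows because any such lift must restrict modulo $\hslash$ to $\varphi^*_g(\bm{a})$, and hence restrict to $\widehat{\varphi}^*_g(\bm{a})$ on the Rees subalgebra $\Rees(A,\mathcal{B}) \subseteq \widehat{A}$ by the uniqueness clause of Lemma~\ref{lemma:linear_functoriality}; $\mathbb{C}\llbracket\hslash\rrbracket$-linearity then pins it down on all of $\widehat{A}$. To see that $\widehat{\varphi}^*_{g,\hslash}(\bm{a})$ is invertible, I would apply the same construction to $g^{-1}$ to produce $\widehat{\varphi}^*_{g^{-1},\hslash}(\bm{a}.g)$, and compose; both possible compositions have semiclassical limit equal to the identity of $A_0$ by Proposition~\ref{prop:classical_action}, so by the uniqueness just established they must both be the identity of $\widehat{A}$.

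Finally, the cocycle identity $\widehat{\varphi}^*_{gg',\hslash}(\bm{a}) = \widehat{\varphi}^*_{g,\hslash}(\bm{a}) \circ \widehat{\varphi}^*_{g',\hslash}(\bm{a}.g)$ is obtained by exactly the same trick: both sides are $\mathbb{C}\llbracket\hslash\rrbracket$-algebra morphisms $\widehat{A} \to \widehat{A}$ intertwining $\sigma$ with $\varphi^*_{gg'}(\bm{a}) = \varphi^*_g(\bm{a}) \circ \varphi^*_{g'}(\bm{a}.g)$ (Proposition~\ref{prop:classical_action}), so the uniqueness clause forces them to coincide. Strictly speaking, the composition on the right-hand side should be rewritten using uniqueness together with the compatibility of $\widehat{(\cdot)}^*$ with composition recorded in Lemma~\ref{lemma:linear_functoriality}, which is why no nontrivial combinatorial identity among the weights $\eta_i$ is needed here.

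The main potential obstacle is conceptually already removed by Proposition~\ref{prop:equivariant_symplectic_form} and Lemma~\ref{lemma:linear_functoriality}: the only thing one has to verify beyond those is that the filtered morphism extends continuously to the completion $\widehat{A}$, which is immediate because it is filtration-preserving and $\mathbb{C}\llbracket\hslash\rrbracket$-linear extension is automatic. In other words, once the classical map is shown to be symplectic, the entire quantum statement is a formal consequence of contravariant functoriality of the Weyl-algebra construction and of the Rees/completion machinery of Sections~\ref{sec:filtered_quantisation} and~\ref{sec:rees_construction}.
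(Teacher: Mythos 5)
Your proposal takes essentially the same route as the paper: apply Lemma~\ref{lemma:linear_functoriality} to the linear symplectomorphism $\varphi_g(\bm{a})$, extend $\mathbb{C}\llbracket\hslash\rrbracket$-linearly to $\widehat{A}$, verify the intertwining with $\sigma$, and obtain the cocycle identity from uniqueness of the filtered lift together with~\eqref{eq:cocycle_classical_action}. The one thing you spell out that the paper leaves implicit is why $\widehat{\varphi}^*_{g,\hslash}(\bm{a})$ is an isomorphism (compose with the map for $g^{-1}$ and invoke uniqueness); this is a small but genuine improvement in completeness, while the rest matches the paper's argument step for step.
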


\begin{proof}It follows from the discussion of Section~\ref{sec:action_on_algebras} that for all $g \in \SL_2(\mathbb{C})$ there is a linear symplectic map $\varphi_g \colon (\mathbb{M},\omega_{\bm{a}}) \to (\mathbb{M},\omega_{\bm{a}.g})$. Using Lemma~\ref{lemma:linear_functoriality} one concludes that there exists a~morphisms of associative algebras $\widehat{\varphi}^*_{g,\hslash}(\bm{a}) \colon W(\mathbb{M}^*,\{\cdot,\cdot\}_{\bm{a}.g}) \to W(\mathbb{M}^*,\{\cdot,\cdot\}_{\bm{a}})$ which restricts to the pull-back $\varphi^*_g(\bm{a})$ on linear functions, and then an induced morphism on the completion $\widehat{A}$. Finally, by linearity one finds
\begin{gather*}
 \sigma \circ \widehat{\varphi}^*_{g,\hslash}(\bm{a}) \bigg(\sum_k f_k \hslash^k\bigg)  = \sum_k \sigma_k\big(\widehat{\varphi}^*_{g,\hslash}(\bm{a})(f_k)\big) = \varphi^*_g(\bm{a})\bigg(\sum_k \sigma_k(f_k) \bigg)\\
 \hphantom{\sigma \circ \widehat{\varphi}^*_{g,\hslash}(\bm{a}) \bigg(\sum_k f_k \hslash^k\bigg)}{}
 = \varphi^*_g(\bm{a}) \circ \sigma \bigg(\sum_k f_k \hslash^k\bigg),
\end{gather*}
whence indeed $\sigma \circ \widehat{\varphi}^*_{g,\hslash}(\bm{a}) = \varphi^*_g(\bm{a}) \circ \sigma$. The compatibility with the product follows from the uniqueness of Lemma~\ref{lemma:linear_functoriality} and from~\eqref{eq:cocycle_classical_action}, since
\begin{equation*}
 \sigma \circ \widehat{\varphi}^*_{gg',\hslash}(\bm{a}) = \varphi^*_{gg'}(\bm{a}) \circ \sigma = \big(\varphi^*_{g}(\bm{a}) \circ \varphi^*_{g'}(\bm{a}.g)\big) \circ \sigma = \big(\widehat{\varphi}^*_{g,\hslash}(\bm{a}) \circ \widehat{\varphi}^*_{g',\hslash}(\bm{a}.g)\big) \circ \sigma.\tag*{\qed}
\end{equation*}\renewcommand{\qed}{}
\end{proof}

Moreover, there is an explicit formula for the quantum action on monomials. To write it choose bases of $W^j \subseteq V$ for $j \in J$ and denote $B^{ij}_{kl} \in \mathbb{M}^*$ the associated linear coordinates which take the components of $B^{ij} \colon W^j \to W^i$.

\begin{Definition}The Weyl quantisation of a linear function on $\mathbb{M}$ is its image in the universal embedding $\mathbb{M}^* \hookrightarrow A$.
\end{Definition}

Denote the Weyl quantisation by $f \mapsto \widehat{f} \in \mathcal{B}_1 \subseteq A$, for $f \in \mathbb{M}^*$. Then~\eqref{eq:classical_action_decomposition} yields the following formula on monomials:
\begin{equation}\label{eq:quantum_action}
 \widehat{\varphi}^*_{g,\hslash}(\bm{a})\bigg(\prod_p \widehat{B}^{i_p j_p}_{k_pl_p}\bigg) \cdot \hslash^l = \eta \cdot \prod_p \widehat{B}^{i_pj_p}_{k_pl_p} \cdot \hslash^l, \qquad \text{where} \quad \eta \coloneqq \prod_p \eta_{i_p}(g,\bm{a}),
\end{equation}
with the functions $\eta_{i_p} \colon \SL_2(\mathbb{C}) \times \mathbf{A} \to \mathbb{C}^*$ defined in~\eqref{sec:action_on_weyl_modules}.

Hence on the whole the action of $\SL_2(\mathbb{C})$ on the embeddings $\bm{a} \colon J \hookrightarrow \mathbb{C} \cup \{\infty\}$ is lifted to an action on the bundle of noncommutative algebras $\widehat{\mathcal{A}} \to \mathbf{A}$, whose fibre over $a$ is by definition $\big(\widehat{A},\ast_{\bm{a}}\big)$. In contrast with $\mathcal{A}_0$, $\widehat{\mathcal{A}}$ is not given as a trivial bundle of associative algebras, but there is a global trivialisation as a bundle of $\mathbb{C} \llbracket \hslash \rrbracket$-modules provided by the canonical $\bm{a}$-independent identifications $\widehat{A} \cong A_0 \llbracket \hslash \rrbracket$ (see Remark~\ref{remark:star_product}). Further, taking fibrewise semiclassical limit defines a map of bundles $\widehat{\mathcal{A}} \to \mathcal{A}_0$ over the identity.

Then the assignment $g \mapsto \widehat{\varphi}^*_{g,\hslash}$ for $g \in \SL_2(\mathbb{C})$ defines a $\SL_2(\mathbb{C})$-action on $\widehat{\mathcal{A}}$ covering that on the base, and the semiclassical limit intertwines it with the classical action on $\mathcal{A}_0$.

This is a quantisation of the $\SL_2(\mathbb{C})$-action of~\cite{boalch_2012_simply_laced_isomonodromy_systems}.

\section{Classical isomonodromy system}\label{sec:slims}

The definition of the simply-laced isomonodromy systems of~\cite{boalch_2012_simply_laced_isomonodromy_systems} will be recalled in this section.

As in Section~\ref{sec:weyl_modules} let $V$ be a finite-dimensional vector space, and consider a differential operator $M = \alpha \partial + \beta z - \gamma \in \End(V) \otimes A_1$, where $A_1 = \mathbb{C}[\partial,z]$ is the one-dimensional Weyl algebra~\eqref{eq:weyl_algebra_dimension_one}. Suppose $M$ is put in normal form, that is either the degenerate form~\eqref{eq:normal_degenerate_form} or the generic form~\eqref{eq:normal_generic_form}. Then the system $Mv = 0$ for a holomorphic $V$-valued function $v$ can be written
\begin{equation*}
 \partial_z u = \big(Az + B + T + Q(z - T^{\infty})^{-1}P\big)u
\end{equation*}
in the case of a degenerate normal form, and
\begin{equation*}
 \partial_z u =  (Az + B + T )u
\end{equation*}
in the case of a generic form, where $u$ is the component of $v$ taking values in $U^{\infty} \subseteq V$ (whence $u = v$ in the generic case). This is a first-order system of linear differential equations with rational coefficients for the function $u$, which can be extended to a meromorphic connection on the trivial holomorphic vector bundle $U^{\infty} \times \mathbb{C}P^1 \to \mathbb{C}P^1$. Then~$u$ becomes a local section of the vector bundle, and the above differential equations express the fact that it is covariantly constant.

The next step is to introduce isomonodromic deformations of such meromorphic connections, as follows. Recall from Section~\ref{sec:weyl_modules} that $U^{\infty} = \bigoplus_{j \in J \setminus \{\infty\}} W^j$ is graded by a finite set~$J$, and that the diagonal part $\widehat{T} = (T^{\infty},T) \in \End(V)$ of $\gamma$ consists of semisimple endomorphisms
\begin{equation*}
 T^{\infty} \in \End\big(W^{\infty}\big), \qquad T = \bigoplus_{j \in J \setminus \{\infty\}} T^j \in \bigoplus_{j \in J \setminus \{\infty\}} \End\big(W^j\big) \subseteq \End\big(U^{\infty}\big).
\end{equation*}
Introduce accordingly the finer decomposition of $V$ by splitting $W^j$ into eigenspaces for $T^j$:
\begin{equation*}
 W^j = \bigoplus_{i \in I^j} V_i,
\end{equation*}
where $I^j$ is a finite set indexing the spectrum of $T^j$. Hence $V = \bigoplus_{i \in I} V_i$, with $I \coloneqq \coprod_{j \in J} I^j$, and one has
\begin{equation}\label{eq:isomonodromy_times}
 T^j = \bigoplus_{i \in I^j} t_i \Id_{V_i},
\end{equation}
where $\{t_i\}_{i \in I^j}$ is the spectrum of $T$. The admissible variations of these spectra give the isomonodromy times.

Define then the space of isomonodromy times $\mathbf{B} \subseteq \mathbb{C}^I$ as the open set corresponding to variations of the spectral types of $T$ and $T^{\infty}$ such that the eigenspace decomposition of each $W^j$ does not change. This means one is allowed to vary the eigenvalues so that $t_i \neq t_k$ for $i \neq k \in I^j$, whence
\begin{equation*}
 \mathbf{B} \coloneqq \prod_{j \in J} \mathbb{C}^{I^j} \setminus \big\{\{t_i\}_{i \in I^J} \,|\, t_i \neq t_k \text{ for } i \neq k\big\} \subseteq \mathbb{C}^I.
\end{equation*}
If the $I$-grading $V = \bigoplus_{i \in I} V_i$ is fixed then the data of the semisimple endomorphisms~\eqref{eq:isomonodromy_times} corresponds to giving a~point of~$\mathbf{B}$.

\begin{Theorem}[\cite{boalch_2012_simply_laced_isomonodromy_systems}]\label{theorem:slims}
There exists a time-dependent Hamiltonian system $H \colon \mathbb{M} \times \mathbf{B} \to \mathbb{C}^I$ whose flow controls the isomonodromic deformations of meromorphic connections of the form
\begin{gather}\label{eq:meromorphic_connections}
 \nabla = {\rm d} - \big(Az + B + T + Q(z - T^{\infty})^{-1}P\big){\rm d}z = {\rm d} - \left(Az + B + T + \sum_{i \in I^{\infty}} \frac{R_i}{z - t^{\infty}_i}\right){\rm d}z,\!\!\!
\end{gather}
defined on the trivial holomorphic vector bundle $U^{\infty} \times \mathbb{C}P^1 \to \mathbb{C}P^1$, where $R_i = Q^iP^i \in \End(U^{\infty})$. Moreover, the system is strongly flat, which means that
\begin{equation*}
 \frac{\partial H_i}{\partial t_j} - \frac{\partial H_j}{\partial t_i} = 0 = \{H_i,H_j\}_{\bm{a}},
\end{equation*}
for all $i, j \in I$, where $H_i$ is the $i$-th component of~$H$.
\end{Theorem}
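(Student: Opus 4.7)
My plan is to construct the Hamiltonians $H_i$ explicitly as trace polynomials in the matrix entries of the universal element $(B,P,Q) \in \mathbb{M}$, then verify separately that Hamilton's equations reproduce the isomonodromy equations for~\eqref{eq:meromorphic_connections} and that the resulting system is strongly flat. For a regular time $t_i$ with $i \in I^{\infty}$ (a pole position), the natural guess, modelled on Jimbo--Miwa--M\^ori--Sato and corrected for the presence of an irregular part at infinity, is
\begin{equation*}
 H_i = \Tr\bigl(R_i(At_i + B + T)\bigr) + \sum_{k \in I^{\infty},\, k \neq i} \frac{\Tr(R_iR_k)}{t_i - t_k}, \qquad R_i = Q^iP^i.
\end{equation*}
For an irregular time $t_i$ with $i \in I^j$, $j \neq \infty$, I would write a Harnad-dual expression involving the leading term $A$ and the off-diagonal blocks $B^{ij}$, chosen so that the assignment $i \mapsto H_i$ is manifestly equivariant under the $\SL_2(\mathbb{C})$-action of Section~\ref{sec:classical_symmetries}.

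The first verification is that $\partial \Gamma/\partial t_i = \{H_i,\Gamma\}_{\bm{a}}$, applied to each coordinate $\Gamma$ on $\mathbb{M}$, reproduces the known nonlinear differential equations controlling isomonodromic deformations of~\eqref{eq:meromorphic_connections}. This is essentially a direct computation once the inverse of $\omega_{\bm{a}}$~\eqref{eq:symplectic_form} is unpacked: the canonical duality between $P^i$ and $Q^i$ gives the Schlesinger-type residue equations, while the pairing of $B^{ij}$ with $B^{ji}$ weighted by $(a_i-a_j)^{-1}$ produces the cross-terms coupling the block structure and the irregular deformation. Strong flatness splits into the symmetry $\partial_i H_j = \partial_j H_i$, which reads off from the antisymmetric combination $\Tr(R_iR_k)/(t_i-t_k)$ together with the linear dependence of the remaining terms, and the Poisson commutativity $\{H_i,H_j\}_{\bm{a}} = 0$, which is the core of the statement.

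The Poisson commutativity is the main obstacle. A direct approach uses the cyclic symmetry of the trace and the residue-calculus identities underlying the classical proof of flatness of the Schlesinger system, extended by Harnad duality to the irregular blocks. A more structural route, which I would ultimately favour, is to exploit the already-established $\SL_2(\mathbb{C})$-symmetry: by Proposition~\ref{prop:equivariant_symplectic_form} the classical action is symplectic and permutes the Hamiltonians up to the multiplicative cocycle $\eta_i(g,\bm{a})$, so once $\{H_i,H_j\}_{\bm{a}} = 0$ is established on one orbit of the $\SL_2(\mathbb{C})$-action (for example the Schlesinger case $A = 0$, all $a_j$ finite, where flatness is classical), the identity propagates to every other orbit, and in particular to the mixed cases coupling regular and irregular times where a direct calculation is combinatorially heaviest.
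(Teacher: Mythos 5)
This theorem is cited verbatim from Boalch's paper~\cite{boalch_2012_simply_laced_isomonodromy_systems}; the present paper does not reprove it, and the original proof there is a direct computation with explicit formulas for the Hamiltonians at both regular and irregular times, together with residue and trace identities for the flatness. Your proposal agrees with the cited proof on the first two verifications (reproducing the isomonodromy equations, and the closedness $\partial_i H_j - \partial_j H_i = 0$), but the ``structural route'' you ultimately favour for the Poisson commutativity has genuine gaps.

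First, the $\SL_2(\mathbb{C})$-action only moves the embedding $\bm{a} \in \mathbf{A}$; it preserves the partitioned set $I = \coprod_J I^j$ and the graph $\mathcal{G}$. The Schlesinger case $A = 0$ corresponds to $|J| = 2$ with $\bm{a}(J) = \{\infty, 0\}$, so it is simply not in the $\SL_2(\mathbb{C})$-orbit of a configuration with $|J| \geq 3$, and the reduction to a known base case fails for precisely the new systems that make the theorem interesting. Even at fixed $|J| > 3$, the orbits are at most $3$-dimensional while $\mathbf{A}$ has dimension $|J|$, so a single orbit cannot cover the parameter space. Second, the transformation of the Hamiltonians under the action is misstated: the $\SL_2(\mathbb{C})$-action multiplies each trace of a cycle $C$ by $\prod_{e \in C_1}\eta_{t(e)}(g,\bm{a})$ (cf.\ the proof of Lemma~\ref{lemma:quantisation_and_action_commute}), a product depending on the arrows of the cycle, and $H_i = \Tr(W_i)$ is a sum of traces of several cycles with different such factors; it is therefore not simply rescaled by $\eta_i$. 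The precise statement that the Hamiltonians are invariant \emph{up to additive constants} holds only \emph{after} Hamiltonian reduction, and is exactly Theorem~\ref{theorem:classical_invariance} (Corollary~9.4 of Boalch), which in Boalch's development is established after~-- and in part using~-- the flatness you are trying to prove. So the structural route, as stated, is both circular and insufficient in coverage, and one is pushed back to the direct residue/trace computation you correctly identify as the alternative.
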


In the statement one takes the Poisson bracket of the fibrewise restriction of the time-dependent Hamiltonians to the symplectic phase-space $\mathbb{M}$. Note~\eqref{eq:meromorphic_connections} simplifies to $\nabla = {\rm d} - (Az + B + T){\rm d}z$ in the case of a generic form.

\begin{Definition}\label{def:slims}The time-dependent Hamiltonian system $H$ is the simply-laced isomonodromy system attached to the data of the partitioned set $I = \coprod_{j \in J} I^j$, the $I$-graded space $V = \bigoplus_{i \in I} V_i$, and the embedding $\bm{a} \colon J \hookrightarrow \mathbb{C} \cup \{\infty\}$ of the set of parts of $I$. The components of $H$ are the simply-laced Hamiltonians.
\end{Definition}

Setting $\mathbb{F}_{\bm{a}} \coloneqq (\mathbb{M},\omega_{\bm{a}}) \times \mathbf{B}$ the canonical projection $\pi_{\bm{a}} \colon \mathbb{F}_{\bm{a}} \to \mathbf{B}$ defines a trivial symplectic fibration which constitutes the total space of the nonautonomous Hamiltonian system. The system itself is encoded in the horizontal $1$-form
\begin{equation}\label{eq:slims}
 \varpi = \sum_{i \in I} H_i \, {\rm d}t_i \in \Omega^0(\mathbb{F}_{\bm{a}},\pi_{\bm{a}}^*T^*\mathbf{B}),
\end{equation}
which also admits an intrinsic description (see~\cite[equation~(5.3)]{boalch_2012_simply_laced_isomonodromy_systems} and below for a coordinate independent formula, which is not needed here).

\subsection{Moduli space of meromorphic connections on the sphere}\label{sec:moduli_spaces}

To understand the relation of this Hamiltonian system with the moduli space of meromorphic connections one must first set the stage for the Hamiltonian reduction of $\mathbb{M}$.

The group $\widehat{H} \coloneqq \prod_{i \in I} \GL(V_i)$ acts on $(\mathbb{M},\omega_{\bm{a}})$ by simultaneous conjugation in Hamiltonian fashion, and one can take the symplectic reduction $\mathbb{M} \sslash_{\breve{\mathcal{O}}} \widehat{H}$ at an adjoint orbit
\begin{equation*}
 \breve{\mathcal{O}} \subseteq \mathfrak{h} \coloneqq \Lie(\widehat{H}) = \prod_{i \in I} \mathfrak{gl}(V_i),
\end{equation*}
identifying adjoint orbits with coadjoint ones via the symmetric nondegenerate $\widehat{H}$-invariant pairing $\mathfrak{h} \otimes \mathfrak{h} \to \mathbb{C}$ provided by the trace.

This in turn gives a finite-dimensional presentation for a moduli space of meromorphic connections with irregular singularities on the sphere, as follows. Let $H \coloneqq \prod_{i \in I \setminus I^{\infty}} \GL(V_i)$, subgroup of $\widehat{H}$, and choose adjoint orbits $\breve{\mathcal{O}}_H \in \Lie(H)$ and $\mathcal{O}_i \in \End(U^{\infty})$ for $i \in I^{\infty}$. Set also $\mathcal{O} \coloneqq \{\mathcal{O}_i\}_{i \in I^{\infty}}$, and fix a semisimple endomorphism $\widehat{T}$ corresponding to a point of $\mathbf{B}$ as in~\eqref{eq:isomonodromy_times}.

\begin{Definition}\label{def:wild_moduli_spaces}The space $\mathcal{M}_{\dR}^* = \mathcal{M}^*_{\dR}\big(\widehat{T},\breve{\mathcal{O}}_H,\mathcal{O}\big)$ is the moduli space of isomorphism classes of connections~\eqref{eq:meromorphic_connections} admitting local normal forms
\begin{equation*}
 {\rm d} - \frac{\Lambda_i}{z_i}{\rm d}z_i + \text{holomorphic terms}
\end{equation*}
around $t^{\infty}_i \in \mathbb{C}$, for some $\Lambda_i \in \mathcal{O}_i$ and some local coordinate $z_i$ vanishing at $t^{\infty}_i$, and
\begin{equation*}
 {\rm d} - \left(\frac{A}{w^3} + \frac{T}{w^2} + \frac{\Lambda}{w}\right){\rm d}w + \text{holomorphic terms}
\end{equation*}
around infinity, for some $\Lambda \in \breve{\mathcal{O}}_H$.
\end{Definition}

Conceptually the point of the base space $\mathbf{B}$ singles out a \textit{wild Riemann surface} structure
\begin{equation}\label{eq:wild_riemann_surface_structure}
 \bm{\Sigma} = \bm{\Sigma}\big(\widehat{T}\big) = \big(\mathbb{C}P^1,\{\infty,t^{\infty}_i\}_{i \in I^{\infty}},\{Q_{\infty},Q_i\}_{i \in I^{\infty}}\big)
\end{equation}
on the Riemann sphere, with $Q_{\infty} = \frac{Az^2}{2} + Tz$ and $Q_i = 0$ for $i \in I^{\infty}$, and where~$z$ is a~holo\-morphic coordinate identifying $\mathbb{C}P^1 \cong \mathbb{C} \cup \{\infty\}$. Recall that a~wild Riemann surface is the data of a~Riemann surface with marked points and irregular types at those points. To define them in the case at hand let~$m$ be the dimension of $U^{\infty}$ and choose a basis so that $\GL(U^{\infty}) \cong \GL_m(\mathbb{C})$.

\begin{Definition}
\label{def:irregular_type}

An unramified irregular type at the point $p \in \mathbb{C}P^1$ for the group $\GL_m(\mathbb{C})$ is an element $Q \in \mathfrak{t} (\!( z_p )\!) \big\slash \mathfrak{t} \llbracket z_p \rrbracket$, where $z_p$ is a local holomorphic coordinate vanishing at $p$ and $\mathfrak{t} \subseteq \mathfrak{gl}_m(\mathbb{C})$ the standard Cartan subalgebra of diagonal matrices.
\end{Definition}
This means $Q$ is the germ of a $\mathfrak{t}$-valued meromorphic function around $p$, defined up to holomorphic terms:
\begin{equation*}
 Q = \sum_{j = 1}^{k_p} T^p_j z_p^{-j},
\end{equation*}
where $T^p_j \in \mathfrak{t}$ for all $j \in \{1, \dotsc, k_p\}$. Then for all choice of $\Lambda \in \mathfrak{gl}_m(\mathbb{C})$ the differential operator ${\rm d} - \frac{\Lambda}{z_p}{\rm d}z_p + dQ$ is the germ of a meromorphic connection defined on the trivial holomorphic vector bundle $\mathbb{C}^m \times \mathbb{C}P^1 \to \mathbb{C}P^1$, having a pole of order $k_p + 1$ at~$p$ with residue $\Lambda$.

\begin{Remark}Definition~\eqref{def:irregular_type} is given in the case of a compact Riemann surface of genus zero since this is what is needed in this paper; this definition however extends verbatim to higher genera. See instead~\cite{boalch_2011_riemann_hilbert_for_tame_complex_parahoric_connections, boalch_yamakawa_2015_twisted_wild_character_varieties} for a coordinate-free generalisation of this notion to other complex reductive groups $G$, and to \textit{ramified} irregular types which are not conjugated to elements in the standard Cartan subalgebra $\mathfrak{t}(\!( z_p )\!) \subseteq \Lie(G) (\!( z_p )\!)$.
\end{Remark}

Hence one considers here the Riemann sphere with marked points at $z = \infty$ and $z = t^{\infty}_i$, and the only nonvanishing irregular type is put at $\infty$. Then the spectrum of the semisimple endomorphism $T^{\infty}$ selects the position of the simple poles in the finite part~-- the regular isomonodromy times~--, whereas that of $T$ selects the irregular type at infinity~-- the irregular isomonodromy times. Thus $\mathbf{B}$ is a space of admissible deformations of wild Riemann surface structures~\eqref{eq:wild_riemann_surface_structure} on~$\mathbb{C}P^1$, generalising the space of variations of pointed Riemann surface structures on the sphere, and even further the moduli space of deformation of ordinary Riemann surface structures, which is trivial in genus zero.

There now exists a symplectic fibration $\widetilde{\mathcal{M}}^*_{\dR} = \mathcal{M}^*_{\dR}\big(\bullet,\breve{\mathcal{O}}_H,\mathcal{O}\big) \to \mathbf{B}$ whose fibre over the wild Riemann surface $\bm{\Sigma}$ of~\eqref{eq:wild_riemann_surface_structure} is the moduli space of Definition~\ref{def:wild_moduli_spaces}. Isomonodromic families of meromorphic connections inside this fibration define the leaves of an integrable nonlinear/Ehresmann symplectic connection: the \textit{isomonodromy connection}.

The symplectic geometry of these isomonodromic deformations admits in this case a Hamiltonian interpretation, because of the existence of a preferred global trivialisation of the bundle $\widetilde{\mathcal{M}}^*_{\dR} \to \mathbf{B}$.

\begin{Theorem}[\cite{boalch_2012_simply_laced_isomonodromy_systems}]\label{theorem:isomorphism_weyl_meromorphic}Choose a wild Riemann surface structure $\bm{\Sigma}$ on $\mathbb{C}P^1$, as in~\eqref{eq:wild_riemann_surface_structure}. One can match up the choice of $\breve{\mathcal{O}} \in \mathfrak{h}$ with a choice of $\mathcal{O} \in \End(U^{\infty})^{I^{\infty}}$ and $\breve{\mathcal{O}}_H \in \Lie(H)$ so that there is an identification
\begin{equation*}
 \mathbb{M} \sslash_{\breve{\mathcal{O}}} \widehat{H} \cong \mathcal{M}^*_{\dR}\big(\bm{\Sigma},\breve{\mathcal{O}}_H,\mathcal{O}\big),
\end{equation*}
of symplectic algebraic varieties.

Moreover, stable points of $\mathbb{M}$ for the base-changing $\widehat{H}$-action correspond to stable connections, i.e., connections with no proper subconnections living on trivial holomorphic vector bundles. Restricting to the stable locus, the above identification becomes an isomorphism of holomorphic symplectic manifolds.
\end{Theorem}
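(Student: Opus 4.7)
The plan is to construct the identification explicitly and then verify compatibility of symplectic structures, moment map level sets, and stability conditions. Since $V = \bigoplus_{i \in I} V_i$ is fixed and $\bm{\Sigma}$ determines $A$, $T$, and the positions $\{t^\infty_i\}_{i \in I^\infty}$ of the finite poles (hence also $T^\infty$), the only remaining data is the point $m = (B, P, Q) \in \mathbb{M}$. First, I would send such a point to the meromorphic connection
\begin{equation*}
\nabla(m) = {\rm d} - \left(Az + B + T + Q(z - T^\infty)^{-1}P\right){\rm d}z
\end{equation*}
on $U^\infty \times \mathbb{C}P^1 \to \mathbb{C}P^1$, and check that this connection has the local normal forms prescribed in Definition~\ref{def:wild_moduli_spaces}: at a simple pole $t^\infty_i$ the residue equals $R_i = Q_i P_i \in \End(U^\infty)$, while at infinity the change of coordinate $w = 1/z$ yields the claimed third-order pole with principal part $A/w^3 + T/w^2$ and with a residue $\Lambda \in \End(U^\infty)$ that is a linear expression in $B$, $P$, $Q$, and the fixed data.

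Second, I would identify the $\widehat{H} = \prod_{i \in I} \GL(V_i)$-action by simultaneous conjugation with the action of constant gauge transformations preserving the $I$-grading, and compute the moment map $\mu \colon \mathbb{M} \to \mathfrak{h}^*$. Using the symplectic form \eqref{eq:symplectic_form} and the trace pairing to identify $\mathfrak{h} \cong \mathfrak{h}^*$, the component $\mu_i$ at $i \in I^\infty$ reduces to $\mu_i = (P_i Q_i)^\circ$ up to the fixed additive constant coming from $T$, whereas at $i \notin I^\infty$ the moment map value is a sum of commutators of the $B^{jk}$-components projected onto $V_i$. Fixing an orbit $\breve{\mathcal{O}} \subseteq \mathfrak{h}$ therefore fixes: (i)~the $\GL(V_i)$-conjugacy class of each $P_i Q_i$ at $i \in I^\infty$, which by the standard relation between the spectra of $P_iQ_i$ and $Q_iP_i$ determines the orbit $\mathcal{O}_i \subseteq \End(U^\infty)$ of the residue $R_i$; (ii)~the conjugacy class under $H = \prod_{i \notin I^\infty} \GL(V_i)$ of the residue $\Lambda$ at infinity, giving $\breve{\mathcal{O}}_H$. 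The map $\breve{\mathcal{O}} \longleftrightarrow (\mathcal{O}, \breve{\mathcal{O}}_H)$ thus obtained is the matching asserted in the statement.

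Third, and most delicately, I would show that the map $\nabla$ descends to a symplectomorphism from $\mathbb{M} \sslash_{\breve{\mathcal{O}}} \widehat{H}$ onto $\mathcal{M}^*_{\dR}(\bm{\Sigma},\breve{\mathcal{O}}_H,\mathcal{O})$. Surjectivity and injectivity follow from the fact that two connections of the form~\eqref{eq:meromorphic_connections} with matching irregular data at infinity and matching positions of simple poles differ by a constant gauge transformation preserving the grading, together with the freedom to rescale the off-diagonal factorisations $R_i = Q_iP_i$ by the $\GL(V_i)$-action. Matching the symplectic forms is the main obstacle: the de Rham symplectic form on $\mathcal{M}^*_{\dR}$ is defined by residues of a meromorphic pairing between tangent vectors (deformations of $\nabla$) and, pulled back to $\mathbb{M}$, must recover $\omega_{\bm{a}}$. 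I would establish this by computing the pullback directly on the linear coordinates $B^{ij}_{kl}$, $P^i_{kl}$, $Q^i_{kl}$, extracting the relevant residues from the partial fractions $Q(z - T^\infty)^{-1}P$ and the polynomial part $Az + B + T$, and checking that the cross-pairings reproduce the weights $\varepsilon_{ij}(\bm{a})$ of \eqref{eq:symplectic_form_weights}.

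Finally, for the stable-locus statement, I would argue that a proper $\widehat{H}$-invariant subspace $V' \subseteq V$ yields a proper subconnection of $\nabla(m)$ on the trivial subbundle $(V' \cap U^\infty) \times \mathbb{C}P^1$, and conversely that a stable proper subconnection living on a trivial bundle pulls back to a $\widehat{H}$-invariant subspace violating the stability of $m$. The resulting bijection on stable loci is algebraic, hence by the previous symplectic matching an isomorphism of holomorphic symplectic manifolds.
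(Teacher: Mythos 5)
The paper does not actually prove this statement: it is cited directly from Boalch's simply-laced isomonodromy systems paper (see the attribution in the theorem header), and no proof appears in the present article. So there is no internal proof to compare against. What one can say is that your sketch tracks the structure of Boalch's argument faithfully: sending $(B,P,Q)$ to the connection $\mathrm{d}-\big(Az+B+T+Q(z-T^\infty)^{-1}P\big)\mathrm{d}z$, reading the $I^\infty$-components of the moment map as the $P_iQ_i$ (up to the sign fixed by $\varepsilon_\alpha$) and using the classical $PQ$ versus $QP$ spectral comparison to convert a level orbit into the residue orbits $\mathcal{O}_i$, reading the remaining components as fixing the conjugacy class at infinity under $H=\prod_{i\notin I^\infty}\GL(V_i)$, and then checking that the Atiyah--Bott/de~Rham symplectic pairing (a residue pairing of variations of the connection) pulls back to $\omega_{\bm a}$ via the $\varepsilon_{ij}(\bm a)$ weights. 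The two sources of quotienting you mention for injectivity --- constant gauge transformations compatible with the irregular type at infinity (the $H$-factor) and the rescaling freedom in the factorisations $R_i=Q_iP_i$ (the $\GL(V_i)$-factors at $i\in I^\infty$) --- are exactly the two ingredients Boalch uses, so the bijectivity argument is in the right place.

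Two small cautions on precision, neither fatal. First, when matching symplectic forms you write ``I would check,'' which is fair for a sketch, but you should be aware this is the genuinely long computation and is also where the specific normalisation of $\varepsilon_{ij}(\bm a)$ in~\eqref{eq:symplectic_form_weights} enters; it is not a matter of formal bookkeeping. Second, the stability discussion is worded loosely: a subspace $V'\subseteq V$ being ``$\widehat{H}$-invariant'' is essentially automatic once it is $I$-graded, since $\widehat H=\prod_{i\in I}\GL(V_i)$; what is actually relevant to GIT/quiver stability is a proper $I$-graded subspace preserved by all the linear maps $B_\alpha$, i.e.\ a proper sub-representation of the quiver. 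The correct statement, which you should substitute, is that a proper graded sub-representation of $m$ supported on $U^\infty$ yields a proper subconnection of $\nabla(m)$ on a trivial subbundle, and conversely; the bijection then restricts to the stable loci and gives the holomorphic symplectic isomorphism.
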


The isomorphism of Theorem~\ref{theorem:isomorphism_weyl_meromorphic} yields a global trivialisation of the bundle $\widetilde{\mathcal{M}}^*_{\dR} \to \mathbf{B}$, since the symplectic reduction of the space $\mathbb{M}$ is independent of the base point in $\mathbf{B}$. Equivalently, associating to each point of $\big(\mathbb{M} \sslash_{\breve{\mathcal{O}}} \widehat{H}\big) \times \mathbf{B}$ the isomorphism class of the meromorphic connection~\eqref{eq:meromorphic_connections} yields an identification $\varphi \colon \big(\mathbb{M} \sslash_{\breve{\mathcal{O}}} \widehat{H}\big) \times \mathbf{B} \to \widetilde{\mathcal{M}}^*_{\dR}$.

Furthermore, the bundle $\big(\mathbb{M} \sslash_{\breve{\mathcal{O}}} \widehat{H}\big) \times \mathbf{B} \to \mathbf{B}$ carries an integrable nonautonomous Hamiltonian system: the Hamiltonian reduction of the simply-laced isomonodromy system, which is shown to be $\widehat{H}$-invariant (cf.\ Section~\ref{sec:slqc}). The integral manifolds of this system define a flat symplectic Ehresmann connection in the trivial symplectic bundle, and Theorem~\ref{theorem:slims} states that $\varphi$ is a flat isomorphism: the push-forward of the reduced simply-laced isomonodromy system along~$\varphi$ yields a nonautonomous Hamiltonian system which integrates the isomonodromy connection on $\widetilde{\mathcal{M}}^*_{\dR} \to \mathbf{B}$.

\section{Classical Hamiltonian reduction}\label{sec:classical_hamiltonian_reduction}

To state the invariance of the reduced simply-laced Hamiltonians the algebraic formalism of (classical) Hamiltonian reduction will be recalled in this section (see~\cite{etingof_2007_calogero_moser_systems_and_representation_theory}).

\subsection{General theory}\label{sec:general_theory_classical_reduction}

Consider a commutative Poisson algebra $(B_0,\{\cdot,\cdot\})$ equipped with an Hamiltonian action of a~Lie algebra $\mathfrak{g}$. There is then a morphism $\rho \colon \mathfrak{g} \to \Der(B_0)$ of Lie algebras (the action) together with a~lift $\mu^* \colon \Sym(\mathfrak{g}) \to B_0$ (the comoment map) through the adjoint action of $B_0$ on itself:
\begin{equation*}
 \rho(x).b = \{\mu^*(x),b\}, \qquad \text{for} \quad x \in \mathfrak{g}, \ b \in B_0.
\end{equation*}

\begin{Remark}The action is uniquely determined by the comoment map. Moreover the action is equivalently given by a Poisson morphism $\rho \colon \Sym(\mathfrak{g}) \to \Der(B_0)$, where $\Sym(\mathfrak{g}) \cong \mathbb{C}[\mathfrak{g}^*]$ is equipped with the Poisson--Lie bracket.
\end{Remark}

Choose now an ideal $\mathcal{I} \subseteq \Sym(\mathfrak{g})$.

\begin{Definition}\label{def:classical_reduction}The classical Hamiltonian reduction of $B_0$ with respect to the comoment map~$\mu^*$ and the ideal $\mathcal{I}$ is the quotient ring:
\begin{equation*}
 R(B_0,\mathfrak{g},\mathcal{I}) \coloneqq B_0^{\mathfrak{g}} \big\slash \mathcal{J}^{\mathfrak{g}},
\end{equation*}
where $B_0^{\mathfrak{g}} \subseteq B_0$ is the ring $\mathfrak{g}$-invariants, $\mathcal{J} \coloneqq B_0 \mu^*(\mathcal{I}) \subseteq B_0$ is the ideal generated by $\mu^*(\mathcal{I})$, and $\mathcal{J}^{\mathfrak{g}} \coloneqq \mathcal{J} \cap B_0^{\mathfrak{g}}$.
\end{Definition}

Note one can show that $\mathcal{J}^{\mathfrak{g}} \subseteq B_0^{\mathfrak{g}}$ is a Poisson ideal, and thus the reduction in Definition~\ref{def:classical_reduction} is canonically a Poisson algebra.

\begin{Remark}[geometric viewpoint]\label{remark:geometric_symplectic_reduction}
This is the algebraic counterpart of the usual Marsden--Weinstein reduction of a symplectic manifold.

Suppose indeed that $B_0$ is the Poisson algebra of functions on a symplectic manifold $M$, and that the Lie group $G$ acts on $M$ with moment map $\mu \colon M \to \mathfrak{g}^*$, where $\mathfrak{g} \coloneqq \Lie(G)$. Then there is a comoment map $\mu^* \colon \mathfrak{g} \to B_0$ as above, and if $\mathcal{O} \subseteq \mathfrak{g}^*$ is a coadjoint orbit then one considers the ideal of regular functions on $\mathfrak{g}^*$ vanishing on the coadjoint orbit:
\begin{equation*}
 \mathcal{I}_{\mathcal{O}} \coloneqq \{x \in \Sym(\mathfrak{g}) \,|\, x|_{\mathcal{O}} = 0 \}.
\end{equation*}

If $\mathcal{J}_{\mathcal{O}} \coloneqq B_0\mu^*(\mathcal{I}_{\mathcal{O}})$ then the quotient $B_0 \big\slash \mathcal{J}_{\mathcal{O}}$ is the ring of functions on the level set $\mu^{-1}(\mathcal{O})$. Taking $G$-invariant parts is the same as taking $\mathfrak{g}$-invariant parts: a function is fixed under the pull-back along the $G$-action if and only if it is annihilated by the infinitesimal $\mathfrak{g}$-action by vector fields. Hence the invariant ring $(B_0 \big\slash \mathcal{J}_{\mathcal{O}})^{\mathfrak{g}}$ is canonically the ring of functions on the quotient
\begin{equation*}
 \mu^{-1}(\mathcal{O}) \big\slash G = M \sslash_{\mathcal{O}} G.
\end{equation*}
Finally, if $\mathfrak{g}$ is reductive then $(B_0 \big\slash \mathcal{J}_{\mathcal{O}})^{\mathfrak{g}} \cong R(B_0,\mathfrak{g},\mathcal{I}_{\mathcal{O}})$.
\end{Remark}

Suppose now to have two sets of data $(B_0,\mathfrak{g},\mathcal{I})$ and $(B'_0,\mathfrak{g}',\mathcal{I}')$ defining classical Hamiltonian reductions, and let $\varphi \colon B_0 \to B'_0$ be a ring morphism such that $\varphi(B_0^{\mathfrak{g}}) \subseteq B_0'^{\mathfrak{g}'}$ and $\varphi(\mathcal{J}) \subseteq \mathcal{J}'$. Then by the universal property of quotients there exists a unique reduced morphism closing the following diagram:
\[
\begin{tikzpicture}
 \node (a) at (0,0) {$B_0^{\mathfrak{g}}$};
 \node (b) at (3.6,0) {$B_0'^{\mathfrak{g'}}$};
 \node (c) at (0,-2) {$R(B_0,\mathfrak{g},\mathcal{I})$};
 \node (d) at (3.6,-2) {$R(B_0',\mathfrak{g}',\mathcal{I}')$,};
 \node (e) at (1.8,-1) {$\circlearrowleft$};
\path
 (a) edge node[above]{$\varphi$} (b)
 (a) edge node[left]{$\pi_{\mathcal{I}}$} (c)
 (b) edge node[right]{$\pi_{\mathcal{I}'}$} (d)
 (c) edge [dashed] node[below]{$R\varphi$} (d);
\end{tikzpicture}
\]
where $\pi_{\mathcal{I}}$ and are the canonical projections. Further, if $\varphi$ is Poisson then so is $R\varphi$, since by definition the Poisson bracket of the Hamiltonian reduction is the quotient one.

This situation applies in particular to the case of Remark~\ref{remark:geometric_symplectic_reduction}. Assume that $B_0$, $B_0'$ are the algebras of functions on symplectic manifolds $M$, $M'$ equipped with moment maps $\mu \colon M \to \mathfrak{g}^*$, $\mu' \colon M' \to (\mathfrak{g}')^*$ for Hamiltonian actions of reductive Lie groups $G$, $G'$ with Lie algebras $\mathfrak{g}$, $\mathfrak{g}'$, respectively. Choose coadjoint orbits $\mathcal{O} \subseteq \mathfrak{g}^*$, $\mathcal{O}' \subseteq (\mathfrak{g}')^*$ and define the ideals $\mathcal{I}_{\mathcal{O}}$, $\mathcal{I}_{\mathcal{O}'}$, $\mathcal{J}_{\mathcal{O}}$ and~$\mathcal{J}_{\mathcal{O}'}$ as in Remark~\ref{remark:geometric_symplectic_reduction}.

\begin{Lemma}\label{lemma:reduction_classical_morphisms}If $\varphi = f^*$ is the pull-back along a smooth symplectic map $f \colon M' \to M$ sending $G'$-orbits inside $G$-orbits and such that $f\big((\mu')^{-1}(\mathcal{O}')\big) \subseteq \mu^{-1}(\mathcal{O})$ then the reduced morphism $R\varphi$ is well defined. Moreover, $R\varphi$ coincides the pull-back along the smooth symplectic map
\begin{equation*}
 Rf \colon \ M' \sslash_{\mathcal{O}'} G' \longrightarrow M \sslash_{\mathcal{O}} G
\end{equation*}
induced on the symplectic reductions in the identifications
\begin{equation*}
 R(B_0,\mathfrak{g},\mathcal{I}_{\mathcal{O}}) \cong \mathbb{C}\big[M \sslash_{\mathcal{O}} G\big], \qquad R(B_0',\mathfrak{g}',\mathcal{I}_{\mathcal{O}'}) \cong \mathbb{C}\big[M' \sslash_{\mathcal{O}'} G' \big].
\end{equation*}
Hence in brief $R(f^*) = (Rf)^*$.
\end{Lemma}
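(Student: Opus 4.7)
The plan is to split the statement into three sub-claims: (i)~the well-definedness of the algebraic reduced morphism $R\varphi$, (ii)~the construction of the map $Rf$ between the symplectic quotients as a smooth symplectic morphism, and (iii)~the identification $R\varphi = (Rf)^*$ under the isomorphisms of rings of functions recorded in Remark~\ref{remark:geometric_symplectic_reduction}.

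For (i) the two conditions required by the discussion preceding the lemma have to be verified for $\varphi = f^*$. The hypothesis that $f$ carries $G'$-orbits into $G$-orbits translates by pull-back into $f^*(B_0^{\mathfrak{g}}) \subseteq B_0'^{\mathfrak{g}'}$: a function constant on $G$-orbits composes with $f$ to a function constant on $G'$-orbits, and since $G$, $G'$ are reductive the Lie algebra invariants coincide with the group invariants. The inclusion $f^*(\mathcal{J}_{\mathcal{O}}) \subseteq \mathcal{J}_{\mathcal{O}'}$ is equally direct: under the identification recalled in Remark~\ref{remark:geometric_symplectic_reduction} the ideal $\mathcal{J}_{\mathcal{O}}$ cuts out the level set $\mu^{-1}(\mathcal{O})$, so the hypothesis $f((\mu')^{-1}(\mathcal{O}')) \subseteq \mu^{-1}(\mathcal{O})$ forces $f^*h$ to vanish on $(\mu')^{-1}(\mathcal{O}')$ whenever $h \in \mathcal{J}_{\mathcal{O}}$. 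The universal property of the quotients then yields $R\varphi$, and its Poisson nature is inherited from that of $\varphi = f^*$.

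For (ii) the two hypotheses on $f$ exactly say that its restriction to $(\mu')^{-1}(\mathcal{O}')$ lands in $\mu^{-1}(\mathcal{O})$ and intertwines $G'$- and $G$-orbits, so composition with the quotient map descends to a set-theoretic map $Rf \colon M' \sslash_{\mathcal{O}'} G' \to M \sslash_{\mathcal{O}} G$. Smoothness of $Rf$ is standard in the reductive case: the quotient maps are submersions, so locally $Rf$ factors through a smooth section and is a composition of smooth maps. The symplectic property follows from the uniqueness of the Marsden--Weinstein form: both reduced forms are characterised by pulling back to the restriction of the ambient symplectic form on the corresponding level set, and $f^*\omega_M = \omega_{M'}$ propagates this characterisation.

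Finally for (iii), under the isomorphism $R(B_0,\mathfrak{g},\mathcal{I}_{\mathcal{O}}) \cong \mathbb{C}\bigl[M \sslash_{\mathcal{O}} G\bigr]$ a class $[h]$ with representative $h \in B_0^{\mathfrak{g}}$ corresponds to the function $[p] \mapsto h(p)$ for $p \in \mu^{-1}(\mathcal{O})$, well-defined by $G$-invariance. Then $R\varphi[h] = [f^*h]$ represents the function $[p'] \mapsto (f^*h)(p') = h(f(p'))$, while $(Rf)^*$ applied to $[p] \mapsto h(p)$ gives $[p'] \mapsto h(Rf([p']))$, which equals $h(f(p'))$ upon taking $f(p') \in \mu^{-1}(\mathcal{O})$ as representative. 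The two coincide, proving $R\varphi = (Rf)^*$. The only mildly delicate step in the plan is the check that $Rf$ is genuinely smooth (rather than merely continuous) as a map between the scheme-theoretic symplectic quotients, which is a routine consequence of reductivity but deserves to be flagged explicitly; the remaining steps are straightforward verifications of the hypotheses already present in the lemma.
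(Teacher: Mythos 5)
Your proposal is correct and follows essentially the same route as the paper: you verify $f^*(B_0^{\mathfrak{g}}) \subseteq B_0'^{\mathfrak{g}'}$ by the orbit hypothesis, verify $f^*(\mathcal{J}_{\mathcal{O}}) \subseteq \mathcal{J}_{\mathcal{O}'}$ from the level-set hypothesis via the identifications of Remark~\ref{remark:geometric_symplectic_reduction}, invoke the universal property, and identify $R\varphi$ with $(Rf)^*$ on representatives. The only genuine addition in your write-up is the explicit discussion of smoothness and the symplectic property of $Rf$, which the paper treats as standard facts about Marsden--Weinstein reduction rather than something to be argued within the proof; the paper also phrases the check $f^*(\mathcal{J}_{\mathcal{O}}) \subseteq \mathcal{J}_{\mathcal{O}'}$ on the generators $\mu^*(x)$ for $x \in \mathcal{I}_{\mathcal{O}}$ rather than on arbitrary elements of $\mathcal{J}_{\mathcal{O}}$, but since both steps pass through the same vanishing-ideal characterisation the two formulations are equivalent.
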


\begin{proof}
To see that $\varphi = f^*$ sends $B_0^{\mathfrak{g}}$ to $B_0'^{\mathfrak{g}'}$, let $b \in B_0^{\mathfrak{g}}$ be a function which is constant on $G$-orbits. Then for all $x \in M'$ and $g \in G'$ one has
\begin{equation*}
 \varphi (b) (g.x) = b\big(f(g.x)\big) = b\big(f(x)\big) = \varphi (b)(x),
\end{equation*}
using that $f(G'.x) \subseteq G.f(x)$ and that $b$ is constant on $G.f(x) \subseteq M$. This proves that $\varphi(b)$ lies in the ring of $G'$-invariants.

Let then $x \in \mathcal{I}_{\mathcal{O}}$, so that $\left.x\right\vert_{\mathcal{O}} = 0$. Then $\mu^*x$ vanishes on $\mu^{-1}(\mathcal{O})$, and $\varphi\mu^*(x) = f^*\mu^*(x)$ vanishes on $f^{-1}\big(\mu^{-1}(\mathcal{O})\big)$. By hypothesis $(\mu')^{-1}(\mathcal{O}') \subseteq f^{-1}\big(\mu^{-1}(\mathcal{O})\big)$, and thus $\varphi(\mu^*(x)) \in \mathcal{J}_{\mathcal{O'}}$. Since $x$ was arbitrary, and since by definition the set $\{\mu^*(x)\}_{x \in \mathcal{I}_{\mathcal{O}}}$ generates $\mathcal{J}_{\mathcal{O}}$, one sees that $\varphi(\mathcal{J}_{\mathcal{O}}) \subseteq \mathcal{J}_{\mathcal{O}'}$.

Finally, the reduced map $Rf$ is (well) defined by $Rf(G'.x) = G.f(x)$ for all $x \in (\mu')^{-1}(\mathcal{O}')$, and by construction the reduced morphism $R\varphi$ acts on the class of a function by pulling back a~representative along $f$. Thus indeed $R(f^*) = (Rf)^*$.
\end{proof}

\subsection{Classical reduction of symplectic quiver varieties}

We now apply the material of Section~\ref{sec:general_theory_classical_reduction} to the $\SL_2(\mathbb{C})$-action of Section~\ref{sec:action_on_algebras}, after introducing some insightful graph-theoretic notation.

Consider as in Definition~\ref{def:slims} the data of a partitioned set $I = \coprod_{j \in J} I^j$, an $I$-graded finite-dimensional vector space $V = \bigoplus_{i \in I} V_i$ and an embedding $\bm{a} \colon J \hookrightarrow \mathbb{C} \cup \{\infty\}$. Let then $\mathcal{G}$ be the complete $k$-partite graph on nodes $I$, where $k \coloneqq \lvert J \rvert$. This means $\mathcal{G}$ has exactly one edge between every pair of nodes lying in different parts of $I$, so that in particular it is simply-laced (without double edges or loop edges). Replacing each edge with a pair of opposite arrows yields a~quiver, also denoted by $\mathcal{G}$, which plays a~central role in what follows.

\begin{Remark}The results of this section could be extended to an arbitrary simply-laced quiver, not necessarily $k$-parted. We will consider this type of generality only for quantum Hamiltonian reduction in Section~\ref{sec:quantum_reduction_quiver_varieties}.
\end{Remark}

Now by definition $\mathbb{M} = \bigoplus_{i \neq j \in I} \Hom(V_i,V_j)$ is the space of representations of $\mathcal{G}$ in $V$:
\begin{equation*}
 \mathbb{M} \cong \Rep(\mathcal{G},V) \coloneqq \bigoplus_{\alpha \in \mathcal{G}_1} \Hom(V_{s(\alpha)},V_{t(\alpha)}),
\end{equation*}
where $\mathcal{G}_1$ is the set of arrows of $\mathcal{G}$ and $s,t \colon \mathcal{G}_1 \to I$ the source and target maps with values in the set of nodes, respectively.

The embedding $\bm{a} \in \mathbf{A}$ yields a symplectic form~\eqref{eq:symplectic_form_alternative} on $\mathbb{M}$ which can be written in terms of the adjacency of the quiver $\mathcal{G}$ after introducing some further notation. For $\alpha \in \mathcal{G}_1$ let $\alpha^*$ be the arrow opposite to $\alpha$ in $\mathcal{G}$, and denote $B_{\alpha} \colon V_{s(\alpha)} \to V_{t(\alpha)}$ the linear map defined by a~representation. The embedding $\bm{a}$ extends to a map $\bm{a} \colon I \to \mathbb{C} \cup \{\infty\}$ by $a_i \coloneqq a_j$ for $j \in J$ and $i \in I^j$, and we define functions $\varepsilon_{\alpha} \colon \mathbf{A} \to \mathbb{C}^{\times}$ by $\varepsilon_{\alpha} + \varepsilon_{\alpha^*} = 0$, and following~\eqref{eq:symplectic_form_weights}:
\begin{equation*}
 \varepsilon_{\alpha}(\bm{a}) \coloneqq
\begin{cases}
 \dfrac{1}{a_i - a_j},   & i = s(\alpha), \  j = t(\alpha) \not\in I^{\infty}, \\
 1,   & s(\alpha) \in I^{\infty}.
\end{cases}
\end{equation*}
With this notation introduced one has
\begin{equation}\label{eq:symplectic_form_quiver}
 \omega_{\bm{a}} = \sum_{\alpha \in \mathcal{G}_1} \frac{\varepsilon_{\alpha}(\bm{a})}{2} \Tr \big({\rm d}B_{\alpha} \wedge {\rm d}B_{\alpha^*}\big).
\end{equation}

Now there is a moment map $\mu_{\bm{a}} \colon \mathbb{M} \to \mathfrak{h}^* \cong \mathfrak{h}$ for the $\widehat{H}$-action by simultaneous base changing~-- using the componentwise trace duality to identify $\mathfrak{h}^*$ with $\mathfrak{h} = \bigoplus_{i \in I} \mathfrak{gl}(V_i)$. Writing the symplectic form as in~\eqref{eq:symplectic_form_quiver} the $\bm{a}$-dependent moment map admits the following formula (see, e.g.,~\cite[Theorem~10.10]{kirillov_2016_quiver_representations_and_quiver_varieties}):
\begin{equation*}
 \mu_{\bm{a}}(B) = \bigoplus_{i \in I} \left(\sum_{\alpha \in t^{-1}(i)} \varepsilon_{\alpha}(\bm{a}) B_{\alpha}B_{\alpha^*}\right), \qquad \text{where} \quad B = (B_{\alpha})_{\alpha \in \mathcal{G}_1}.
\end{equation*}
This defines a map of Poisson bundles $\mu \colon \widetilde{\mathbb{M}} \to \mathfrak{h} \times \mathbf{A}$, as $\mu_{\bm{a}}$ is a Poisson map for $\bm{a} \in \mathbf{A}$ where~$\mathfrak{h}$ is given the Poisson structure coming from $\mathfrak{h}^*$ under the trace duality.

This bundle-theoretic moment map is compatible with the $\SL_2(\mathbb{C})$-actions on $\widetilde{\mathbb{M}}$ and $\mathbf{A}$.

\begin{Lemma}\label{lemma:compatibility_moment_classical_action}
One has $\mu_{\bm{a}.g} \circ \varphi_g(\bm{a}) = \mu_{\bm{a}}$ for $\bm{a} \in \mathbf{A}$ and $g \in \SL_2(\mathbb{C})$.
\end{Lemma}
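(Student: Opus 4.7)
The plan is to verify the identity arrow-by-arrow on the quiver $\mathcal{G}$, reducing it to the cocycle relation of Lemma~\ref{lemma:action_cocycle_symplectic}.

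First I would unravel the action of $\varphi_g(\bm{a})$ on the representation variables. By~\eqref{eq:action_on_fibration} together with~\eqref{eq:classical_action_decomposition}, the map $\varphi_g(\bm{a}) \colon \mathbb{M} \to \mathbb{M}$ scales each $B_\alpha \in \Hom(V_{s(\alpha)}, V_{t(\alpha)})$ by the factor $\eta_{t(\alpha)}(g,\bm{a})$, where the scalars $\eta_i$ for $i \in J$ from Section~\ref{sec:action_on_weyl_modules} are extended from $J$ to $I$ by setting $\eta_i \coloneqq \eta_j$ whenever $i \in I^j$. This extension is well defined because the classical action is diagonal in the coarse $J$-grading and does not see its refinement by $I$. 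Substituting into the explicit formula for $\mu_{\bm{a}.g}$, the $i$-th component of $(\mu_{\bm{a}.g} \circ \varphi_g(\bm{a}))(B)$ reads
\begin{equation*}
 \sum_{\alpha \in t^{-1}(i)} \varepsilon_\alpha(\bm{a}.g) \, \eta_{t(\alpha)}(g,\bm{a}) \, \eta_{t(\alpha^*)}(g,\bm{a}) \, B_\alpha B_{\alpha^*} = \sum_{\alpha \in t^{-1}(i)} \varepsilon_\alpha(\bm{a}.g) \, \eta_{i}(g,\bm{a}) \, \eta_{s(\alpha)}(g,\bm{a}) \, B_\alpha B_{\alpha^*},
\end{equation*}
using $t(\alpha^*) = s(\alpha)$ and $t(\alpha) = i$.

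Second, I would apply Lemma~\ref{lemma:action_cocycle_symplectic} to the pair of indices $(t(\alpha), s(\alpha))$ — originally stated for distinct elements of $J$, but extending verbatim to distinct elements of $I$ lying in distinct $J$-parts, by the same diagonality observation. This collapses each coefficient $\varepsilon_\alpha(\bm{a}.g) \, \eta_{t(\alpha)}(g,\bm{a}) \, \eta_{s(\alpha)}(g,\bm{a})$ to $\varepsilon_\alpha(\bm{a})$, so that the $i$-th component becomes $\sum_{\alpha \in t^{-1}(i)} \varepsilon_\alpha(\bm{a}) B_\alpha B_{\alpha^*}$, which is by definition the $i$-th component of $\mu_{\bm{a}}(B)$. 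Taking the direct sum over $i \in I$ yields the claimed identity.

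There is no genuine obstacle here: the lemma amounts to repackaging the compatibility between the cocycles $\eta_i$ and the $\bm{a}$-dependent symplectic weights $\varepsilon_{ij}$ already established in Lemma~\ref{lemma:action_cocycle_symplectic}, precisely as anticipated in the remark following that lemma. The one mild technical point is the unambiguous extension of the $\eta_i$ from $J$-indices to $I$-indices, which is forced by the fact that the classical $\SL_2(\mathbb{C})$-action on $\mathbb{M}$ acts by the same scalar on every $\Hom(V_i, V_{i'})$ with $(i,i')$ ranging through a fixed pair of $J$-parts.
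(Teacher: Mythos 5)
Your argument is correct and is essentially the same as the paper's: expand $\varphi_g(\bm{a})$ on each $B_\alpha$ as multiplication by $\eta_{t(\alpha)}(g,\bm{a})$, substitute into the moment-map formula for $\mu_{\bm{a}.g}$, and collapse the coefficient $\varepsilon_\alpha(\bm{a}.g)\,\eta_{t(\alpha)}(g,\bm{a})\,\eta_{s(\alpha)}(g,\bm{a})$ to $\varepsilon_\alpha(\bm{a})$ via Lemma~\ref{lemma:action_cocycle_symplectic}, applied to the nodes $s(\alpha), t(\alpha) \in I$ rather than to elements of $J$ (which is legitimate because, as you note, the cocycles only depend on the $J$-parts). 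One small observation: your factor $\eta_{s(\alpha)}(g,\bm{a})$ is the correct one, whereas the paper's displayed formula contains an apparent typo, writing $\eta_{s(\alpha)}(\bm{a}.g)$ with a single argument.
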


\begin{proof}Looking at~\eqref{eq:action_on_fibration} one sees that
\begin{equation*}
 \varphi_g(\bm{a}) \colon \ B_{\alpha} \longmapsto \eta_{t(\alpha)}(g,\bm{a}) \cdot B_{\alpha},
\end{equation*}
for $\alpha \in \mathcal{G}_1$, where $\eta_{t(\alpha)} \coloneqq \eta_j$ for $j \in J$ and $t(\alpha) \in I^j$.\footnote{Indeed by~\eqref{eq:action_on_fibration} the action of $\varphi_g(\bm{a})$ only depends on the coarser decomposition $V = \bigoplus_{j \in J} W^j$.} Hence
\begin{equation*}
 \mu_{\bm{a}.g} \circ \varphi_g(\bm{a}) \colon \ B \longmapsto \bigoplus_{i \in I} \left(\sum_{\alpha \in t^{-1}(i)} \varepsilon_{\alpha}(\bm{a}.g) \cdot \eta_{t(\alpha)}(g,\bm{a}) \eta_{s(\alpha)}(\bm{a}.g) \cdot B_{\alpha}B_{\alpha^*}\right).
\end{equation*}
Then the conclusion follows from Lemma~\ref{lemma:action_cocycle_symplectic}, replacing the elements $i,j \in J$ with the nodes $s(\alpha), t(\alpha) \in I$.
\end{proof}

Lemma~\ref{lemma:compatibility_moment_classical_action} can be summarised by stating that the $\SL_2(\mathbb{C})$-action ``preserves'' the values of the moment map (cf.~\cite[Proposition~7.5]{boalch_2012_simply_laced_isomonodromy_systems}).

The last ingredient needed for the reduction of the classical $\SL_2(\mathbb{C})$-action is its compatibility with the fibrewise $\widehat{H}$-action on $\widetilde{\mathbb{M}}$: since the $\SL_2(\mathbb{C})$-action does not depend on a choice of basis for $V$ the map $\varphi_g(\bm{a})$ commutes with the $\widehat{H}$-action. This implies that the symplectomorphism $\varphi_g(\bm{a}) \colon (\mathbb{M},\omega_{\bm{a}}) \to (\mathbb{M},\omega_{\bm{a}.g})$ sends $\widehat{H}$-orbits to $\widehat{H}$-orbits.

If further an orbit $\breve{\mathcal{O}} \subseteq \mathfrak{h}$ is chosen then Lemma~\ref{lemma:compatibility_moment_classical_action} yields $\varphi_g(\bm{a})\big(\mu_{\bm{a}}^{-1}(\breve{\mathcal{O}})\big) \subseteq \mu_{\bm{a}.g}^{-1}(\breve{\mathcal{O}})$, since $\mu_{\bm{a}}^{-1} = \varphi_g(\bm{a})^{-1} \circ \mu_{\bm{a}.g}^{-1}$. Hence by Lemma~\ref{lemma:reduction_classical_morphisms} there exists a reduced Poisson morphism
\begin{equation*}
 R\varphi^*_g(\bm{a}) \colon \ R(A_0,\{\cdot,\cdot\}_{\bm{a}.g}) \longrightarrow R(A_0,\{\cdot,\cdot\}_{\bm{a}}),
\end{equation*}
keeping track of the Poisson bracket on the classical algebras in the notation~-- but omitting the Lie algebra and the ideal $\mathcal{I}_{\breve{\mathcal{O}}} \subseteq \Sym(\mathfrak{h})$. This morphism coincide with the pull-back along the symplectomorphism
\begin{equation*}
 R\varphi_g(\bm{a}) \colon \ (\mathbb{M},\omega_{\bm{a}}) \sslash_{\breve{\mathcal{O}}} \widehat{H} \longrightarrow (\mathbb{M},\omega_{\bm{a}.g}) \sslash_{\breve{\mathcal{O}}} \widehat{H}
\end{equation*}
induced on the symplectic reductions, keeping track of the symplectic forms in the notation.

Moreover, since $g \in \SL_2(\mathbb{C})$ and $\bm{a} \in \mathbf{A}$ are arbitrary it follows that the morphisms $R\varphi^*_{gg'}(\bm{a})$ and $R\varphi^*_g(\bm{a}) \circ R\varphi^*_{g'}(\bm{a}.g)$ are defined, and by Proposition~\ref{prop:classical_action} they coincide since they close the same commutative diagram. In particular the morphism $R\varphi^*_g(\bm{a})$ is an isomorphism with inverse $R\varphi^*_{g^{-1}}(\bm{a}.g)$, as seen by taking $g' = g^{-1}$.

Hence we have defined the classical Hamiltonian reduction of the Poisson action of Proposition~\ref{prop:classical_action}. One may consider the bundle of commutative Poisson algebras $R\big(\mathcal{A}_0,\breve{\mathcal{O}}\big) \to \mathbf{A}$ whose fibre over the embedding $\bm{a}$ is the classical Hamiltonian reduction of $(A_0,\{\cdot,\cdot\}_{\bm{a}})$ at the orbit $\breve{\mathcal{O}} \subseteq \Sym(\mathfrak{h})$. It is given as a trivial bundle of graded commutative algebras with a Poisson structure that depends on the point on the base, and the assignment $(g,\bm{a}) \mapsto R\varphi^*_g(\bm{a})$ lifts the action on the base to an $\SL_2(\mathbb{C})$-action on the total space.

\subsection{Classical invariance}
\label{sec:classical_invariance}

Since the simply-laced Hamiltonians $H_i \colon \mathbb{M} \times \mathbf{B} \to \mathbb{C}$ are $\widehat{H}$-invariant, their fibrewise restrictions to $\mathbb{M}$ live in $A_0^{\mathfrak{h}}$, and their canonical projection $\pi_{\breve{\mathcal{O}}} \colon A_0^{\mathfrak{h}} \to R\big(A_0,\mathfrak{h},\mathcal{I}_{\breve{\mathcal{O}}}\big)$ can be taken.

\begin{Definition}
The element $RH_i \coloneqq \pi_{\breve{\mathcal{O}}}(H_i)$ is the reduced simply-laced Hamiltonian at the node $i \in I$.
\end{Definition}

The invariance under the classical $\SL_2(\mathbb{C})$-action is stated as follows.

\begin{Theorem}[{\cite[Corollary~9.4]{boalch_2012_simply_laced_isomonodromy_systems}}]\label{theorem:classical_invariance}
For all $i \in I$ and all $g \in \SL_2(\mathbb{C})$ there exists a constant $c_i \in \mathbb{C}$ such that the reduced Hamiltonian at the node $i$ transforms as
\begin{equation}\label{eq:classical_shift}
 R\varphi^*_g(\bm{a}) RH_i = RH_i + c_i.
\end{equation}
In particular the reduced isomonodromy equations are invariant under the $\SL_2(\mathbb{C})$-action.
\end{Theorem}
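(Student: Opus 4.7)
The plan is to combine the explicit form of the simply-laced Hamiltonians with the scaling behaviour derived in Sections~\ref{sec:action_on_weyl_modules} and~\ref{sec:symplectic_classical_action}, and then use the isomonodromy interpretation to argue that the remaining discrepancy is phase-space-independent. First I would recall the closed formulas for the Hamiltonians $H_i$ on $\mathbb{M} \times \mathbf{B}$: each $H_i$ is a sum of traces of products of the arrows $B_{\alpha}$ weighted by rational functions of the quantities $a_i - a_j$ and $t_i - t_k$, together with terms linear in the entries of $T$ and $T^{\infty}$. Under the map $\varphi_g(\bm{a})$ each $B_{\alpha}$ scales as $B_{\alpha} \mapsto \eta_{t(\alpha)}(g,\bm{a}) B_{\alpha}$, while the denominators transform according to the cocycle identity of Lemma~\ref{lemma:action_cocycle_symplectic}. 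Thus for any pair-of-opposite-arrows block $\varepsilon_{\alpha}(\bm{a}) \Tr(B_{\alpha}B_{\alpha^*})$ the rescaling is exactly absorbed by the change of $\varepsilon_{\alpha}$, while mixed terms of higher valence produce extra factors that can be controlled in a similar way.

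The key step is then to analyse the difference $\Delta_i \coloneqq \varphi_g^*(\bm{a}) H_i - H_i$ on $\mathbb{M} \times \mathbf{B}$ (where on the right-hand side the times are also transformed by the induced action on $\mathbf{B}$, which can be read off Proposition~\ref{prop:equivariant_symplectic_form}). By the previous paragraph, $\Delta_i$ consists of terms in which the phase-space dependence enters only through $\widehat{H}$-invariant combinations proportional to the entries of the moment map $\mu_{\bm{a}}$, plus purely base-depending terms. More precisely, using Lemma~\ref{lemma:compatibility_moment_classical_action} the residual terms collect into polynomials in the components $\mu_{\bm{a}}(B)_i \in \mathfrak{gl}(V_i)$; on the locus $\mu_{\bm{a}}^{-1}(\breve{\mathcal{O}})$ these are replaced by the corresponding entries of a fixed $\Lambda \in \breve{\mathcal{O}}$, and therefore become functions of the isomonodromy times alone after passing to $\widehat{H}$-invariants. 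Hence $\Delta_i$ descends to an element of $\mathbb{C}[\mathbf{B}]$ on $R(A_0,\{\cdot,\cdot\}_{\bm{a}})$, i.e.\ a constant from the point of view of the fibre.

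To upgrade \emph{function of the times} to \emph{true constant}, I would invoke the strong flatness of the simply-laced system (Theorem~\ref{theorem:slims}) together with the fact that $\varphi_g(\bm{a})$ intertwines the isomonodromy connections: since both $RH_i$ and $R\varphi_g^*(\bm{a}) RH_i$ generate the same reduced isomonodromy flow at node~$i$, their difference must be central for the reduced Poisson structure and have vanishing $t_j$-derivatives for every $j \in I$, which forces $\Delta_i$ to be a constant $c_i \in \mathbb{C}$. The invariance of the reduced isomonodromy equations then follows because the shift by $c_i$ is killed by the Poisson bracket and by $\partial/\partial t_j$.

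The step I expect to be most delicate is the bookkeeping of the residual terms in~$\Delta_i$: the precise form of the simply-laced Hamiltonians mixes pieces living on the ``finite'' and ``infinite'' parts of the grading (cf.~\eqref{eq:normal_degenerate_form} versus~\eqref{eq:normal_generic_form}), so one must perform a case-by-case check for the four types of transformation classified in Section~\ref{sec:action_on_weyl_modules} (generic-to-generic, generic-to-degenerate, degenerate-to-generic, degenerate-to-degenerate) to confirm that in every case the apparent phase-space dependence collapses to a function of~$\mathbf{B}$ alone once the moment-map constraint is imposed. Once this case analysis is in place, the final shift-by-constant conclusion is essentially automatic from flatness.
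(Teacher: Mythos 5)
The paper does not actually prove the constant-shift identity~\eqref{eq:classical_shift}: it is imported wholesale from~\cite[Corollary~9.4]{boalch_2012_simply_laced_isomonodromy_systems}. The only argument the paper gives is the Remark immediately following the statement, which derives the \emph{second} sentence (invariance of the reduced isomonodromy equations) from the first by the two-line computation~\eqref{eq:classical_invariance}, using nothing more than the centrality of a constant under the reduced Poisson bracket and the $\mathbf{B}$-independence of the $\SL_2(\mathbb{C})$-action. So your proposal is addressing a claim the paper treats as a black box.

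As a from-scratch attempt, your outline is plausible but has a genuine gap at the "upgrade to a true constant" step. After steps (1)--(5) you have argued (modulo the bookkeeping you defer) that $\Delta_i \coloneqq R\varphi^*_g(\bm{a}) RH_i - RH_i$ lies in $\mathbb{C}[\mathbf{B}]$, i.e.\ is time-dependent but phase-space-independent. Such an element is automatically central for the reduced Poisson bracket, so the claim "both generate the same reduced flow" is an immediate consequence of what you already have, not an independent input. From that you then assert that $\Delta_i$ must have vanishing $t_j$-derivatives, but no argument is offered: strong flatness of the system gives $\partial_{t_j}H_i = \partial_{t_i}H_j$ and $\{H_i,H_j\}=0$, which for a difference that is already central and already a $\mathbf{B}$-function yields only $\partial_{t_j}\Delta_i=\partial_{t_i}\Delta_j$~--~a closedness condition, not vanishing. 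So step (6) as written does not close the argument, and if the theorem really asserts $c_i\in\mathbb{C}$ (as it does), the honest route would be to carry the bookkeeping of step (4) far enough to see that the moment-map terms appearing in $\varphi^*_g(\bm{a})H_i - H_i$ have $\mathbf{B}$-independent coefficients, in which case they evaluate on the orbit $\breve{\mathcal{O}}$ to genuine numbers and your flatness argument is unnecessary. Two smaller points: (a)~an $\widehat{H}$-invariant polynomial in the entries of $\mu_{\bm{a}}$, restricted to $\mu_{\bm{a}}^{-1}(\breve{\mathcal{O}})$, is already constant on the orbit, so the phrase "become functions of the isomonodromy times alone" is slightly off~--~the residual $\mathbf{B}$-dependence, if any, must come from coefficients, not from the moment-map evaluation; (b)~for the "in particular" conclusion the weaker statement $c_i\in\mathbb{C}[\mathbf{B}]$ would already suffice, since the Remark's computation only uses $\{c_i,\cdot\}_{\bm{a}}=0$, and your final sentence that the shift is also "killed by $\partial/\partial t_j$" is not used and not in general true for a $\mathbf{B}$-dependent Casimir.
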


\begin{Remark}The second statement follows from the fact that the reduced isomonodromy equations are the dynamical equations for the time-evolution of classical observables with respect to the reduced simply-laced Hamiltonians~-- in the Hamiltonian picture of motion in classical mechanics, where the classical state is fixed. These equations do not change under constant shifts.

More precisely, if $H$ is a local section of the bundle $R\big(A_0,\mathfrak{h},\mathcal{I}_{\breve{\mathcal{O}}}\big) \times \mathbf{B} \to \mathbf{B}$ at the point $\bm{a} \in \mathbf{A}$, then the reduced equations read
\begin{equation}\label{eq:reduced_classical_imd_equations}
 \partial_{t_i} H = \{RH_i,H\}_{\bm{a}},
\end{equation}
where one uses the reduced Poisson bracket on the Hamiltonian reduction. Consider then the extended bundle
\begin{equation*}
 R\big(\mathcal{A}_0,\breve{\mathcal{O}}\big) \times \mathbf{B} \longrightarrow \mathbf{A} \times \mathbf{B},
\end{equation*}
which is by construction trivial along the variations in~$\mathbf{B}$. If $H$ is a section of this bundle defined in a neighbourhood of the $\SL_2(\mathbb{C})$-orbit of $\bm{a} \in \mathbf{A}$, then we want to relate the isomonodromy equations~-- along the variations in $\mathbf{B}$~-- of $H$ and $\varphi^*_g(\bm{a}) H$ for $g \in \SL_2(\mathbb{C})$. But if $H$ is a solution of~\eqref{eq:reduced_classical_imd_equations} at $\bm{a}.g$ then Theorem~\ref{theorem:classical_invariance} yields
\begin{align}
 \partial_{t_i} (R\varphi^*_g(\bm{a})H)& = R\varphi^*_g(\bm{a}) (\partial_{t_i} H) = R\varphi^*_g(\bm{a}) \big\{RH_i,H\big\}_{\bm{a}.g} \nonumber\\
 &= \big\{R\varphi^*_g RH_i,R\varphi^*_g(\bm{a}) H \big\}_{\bm{a}} = \big\{RH_i,R\varphi^*_g(\bm{a}) H \big\}_{\bm{a}},\label{eq:classical_invariance}
\end{align}
because the $\SL_2(\mathbb{C})$-action does not depend on the point in the base space $\mathbf{B}$, using~\eqref{eq:classical_shift} for the last equality, and since $R\varphi^*_g(\bm{a})$ is a Poisson morphism. Hence $\varphi^*_g(\bm{a}) H$ is a solution of~\eqref{eq:reduced_classical_imd_equations} at $\bm{a} \in \mathbf{A}$, so that a single set of isomonodromy equation at one point controls the time evolution along the whole of the $\SL_2(\mathbb{C})$-orbit of $\bm{a}$.

Conversely, if $H$ is a solution of~\eqref{eq:reduced_classical_imd_equations} in a neighbourhood of $\bm{a} \in \mathbf{A}$ then dragging it along the $\SL_2(\mathbb{C})$-action extends it to a solution in a neighbourhood of the orbit of $\bm{a}$.
\end{Remark}

Geometrically this means that $R\varphi^*_g(\bm{a})$ is a flat isomorphism of vector bundles, as follows. For a fixed embedding $\bm{a} \in \mathbf{A}$ consider the trivial bundle $(A_0,\{\cdot,\cdot\}_{\bm{a}}) \times \mathbf{B} \to \mathbf{B}$ of Poisson algebras. It carries the (simply-laced) isomonodromy connection
\begin{equation*}
 \nabla_{\bm{a}} = {\rm d} - \varpi,
\end{equation*}
with $\varpi$ is as in~\eqref{eq:slims} and the simply-laced Hamiltonians act via their $\bm{a}$-dependent adjoint action,
i.e., via their Hamiltonian vector field $\{H_i,\cdot\}_{\bm{a}}$. The strong flatness of this connection~-- that is the identities ${\rm d}\varpi = 0 = [\varpi,\varpi]$~-- is equivalent to that of the simply-laced isomonodromy system.

Now take fibrewise classical Hamiltonian reduction at an orbit $\breve{\mathcal{O}} \subseteq \mathfrak{h}$ to get a new trivial bundle $R(A_0,\{\cdot,\cdot\}_{\bm{a}}) \times \mathbf{B} \to \mathbf{B}$ of Poisson algebras, keeping the notation for the Poisson bracket and dropping the Lie algebra and the ideal. It carries the reduced (simply-laced) isomonodromy connection $R\nabla_{\bm{a}}$, that is the flat connection defined by
\begin{equation*}
 R\nabla_{\bm{a}} \coloneqq {\rm d} - R\varpi, \qquad \text{where} \quad R\varpi \coloneqq \sum_{i \in I} RH_i \,{\rm d}t_i,
\end{equation*}
where the reduced simply-laced Hamiltonians act via their (reduced) Poisson bracket. Then~\eqref{eq:classical_invariance} shows that the map
\begin{equation*}
 R\varphi^*_g(\bm{a}) \colon \ R\big(A_0,\{\cdot,\cdot\}_{\bm{a}.g}\big) \times \mathbf{B} \longrightarrow R\big(A_0,\{\cdot,\cdot\}_{\bm{a}}\big) \times \mathbf{B}
\end{equation*}
is a flat isomorphism of vector bundles equipped with (flat) connections.

In Section~\ref{sec:quantum_invariance} we will provide a quantum analogue of this statement.

\section{Universal simply-laced quantum connection}
\label{sec:slqc}

In this section we recall the definition of the universal simply-laced quantum connection, for which the first step is recognising the classical Hamiltonians as traces of potentials on the quiver~$\mathcal{G}$ (see~\cite{rembado_2019_simply_laced_quantum_connections_generalising_kz}).

Let then $\mathbb{C}\mathcal{G}_{\cycl}$ be the vector space generated by oriented cycles in $\mathcal{G}$, defined up to cyclic permutations of their arrows. Elements of $\mathbb{C}\mathcal{G}_{\cycl}$ are called potentials, and one can take trace of them to define $\widehat{H}$-invariant functions on~$\mathbb{M}$. Namely, if $C = \alpha_n \cdots \alpha_1$ is a cycle with arrows $\alpha_i \in \mathcal{G}_1$ then a representation of~$\mathcal{G}$ defines an endomorphism $B_{\alpha_n} \cdots B_{\alpha_1} \in \End(V_{s(\alpha_1)})$, and one sets
\begin{equation*}
 \Tr(C) \coloneqq \Tr(B_{\alpha_n} \cdots B_{\alpha_1}) \colon \ \mathbb{M} \longrightarrow \mathbb{C},
\end{equation*}
which is extended by $\mathbb{C}$-linearity to a map $\Tr \colon \mathbb{C}\mathcal{G}_{\cycl} \to A_0$. The fact that $\Tr(C)$ is $\widehat{H}$-invariant follows from the fact that the trace is a~class function.

Now fix a node $i \in I$, and recall $H_i$ denotes the associated component of the simply-laced isomonodromy system of Definition~\ref{def:slims}.

\begin{Proposition}[{\cite[Section~4]{rembado_2019_simply_laced_quantum_connections_generalising_kz}}]
There exist a time-dependent potential $W_i \colon \mathbf{B} \to \mathbb{C}\mathcal{G}_{\cycl}$ such that $H_i = \Tr(W_i) \colon \mathbf{B} \to A_0$. Moreover, the potential $W_i$ is a $\mathbb{C}$-linear combination of the following four types of cycles in the quiver $\mathcal{G}$:
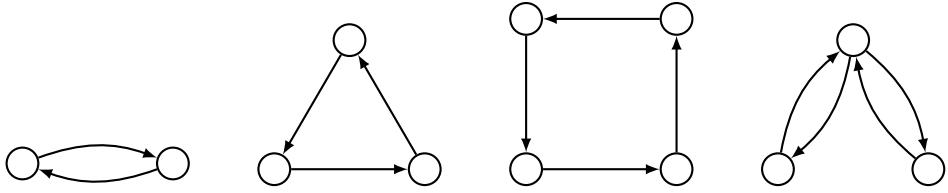
\begin{figure}[h]\centering
\begin{tikzpicture}
 \vertex (a) at (0,0) {};
 \vertex (b) [right of = a] {};
\path
 (a) edge [bend left = 20] (b)
 (b) edge [bend left = 20] (a);
\end{tikzpicture} \qquad \begin{tikzpicture}
			 \vertex (a) at (0,0) {};
			 \vertex (b) [right of = a] {};
			 \vertex (c) at (1,1.7172) {};
			 \path
			 (a) edge (b)
			 (b) edge (c)
			 (c) edge (a);
			 \end{tikzpicture} \qquad \begin{tikzpicture}
						 \vertex (a) at (0,0) {};
						 \vertex (b) [right of = a] {};
						 \vertex (c) [above of = b] {};
						 \vertex (d) [above of = a] {};
						 \path
						 (a) edge (b)
						 (b) edge (c)
						 (c) edge (d)
						 (d) edge (a);
						 \end{tikzpicture} \qquad
		\begin{tikzpicture}
		 \vertex (a) at (0,0) {};
		 \vertex (b) [right of = a] {};
		 \vertex (c) at (1,1.7172) {};
		 \path
		 (a) edge [bend left = 20] (c)
		 (c) edge [bend left = 20] (a)
		 (b) edge [bend left = 20] (c)
		 (c) edge [bend left = 20] (b);
		\end{tikzpicture}
\caption{Isomonodromy cycles. \label{fig:imd_cycles}}
\end{figure}
\end{Proposition}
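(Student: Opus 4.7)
The plan is to exhibit $W_i$ explicitly by revisiting the formulas for the simply-laced Hamiltonians derived in~\cite{boalch_2012_simply_laced_isomonodromy_systems}. First I will recall that each component $H_i$ is obtained as a residue/coefficient extracted from the trace of a polynomial expression in the principal parts of the meromorphic connection~\eqref{eq:meromorphic_connections}. Expanding the connection matrix $Az + B + T + \sum_{j \in I^{\infty}} R_j/(z - t^{\infty}_j)$ with $R_j = Q^jP^j$, and separating the contributions at each singular point, yields a finite sum of trace monomials whose coefficients are algebraic functions on $\mathbf{B}$ (Schlesinger-type kernels $1/(t^{\infty}_i - t^{\infty}_j)$ and the spectra of $A$ and $T$).

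Next I will translate each such summand into a trace of a cycle in $\mathcal{G}$. Using the $I$-grading $V = \bigoplus_{i \in I} V_i$, each $B = \sum B^{ij}$ and each $R_j = Q^jP^j$ decomposes into the linear coordinates $B_{\alpha}$ associated to the arrows of $\mathcal{G}$: $B^{ij}$ is $B_{\alpha}$ for an arrow $\alpha\colon j \to i$ between nodes in distinct parts of the finite part of $I$, whereas $P^j$ and $Q^j$ are $B_{\alpha}$ for arrows joining a node in $I^{\infty}$ with one in $I^j$, $j \neq \infty$. The diagonal operators $A$ and $T$ act by scalars $a_i$ resp.\ $t_i$ on each summand $V_i$, so their presence only contributes scalar factors to (shorter) cycle traces without adding new arrows. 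Composing matrices along the monomial and reading off sources and targets produces an oriented cycle $C$ in $\mathcal{G}$, and by definition its trace in a representation equals the original trace monomial. Summing over these terms produces the desired $W_i \colon \mathbf{B} \to \mathbb{C}\mathcal{G}_{\cycl}$ with $H_i = \Tr(W_i)$.

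Finally I will verify that only cycles of the four shapes in Figure~\ref{fig:imd_cycles} can occur. Once $A$ and $T$ have been absorbed into the scalar coefficients, every monomial in Boalch's explicit formulas has total degree at most four in the $B_{\alpha}$: the linear contribution in $B$ enters at most once at each pole, and the residues $R_i = Q^iP^i$ are quadratic, so cross-terms such as $\Tr(R_iR_j)$ are quartic while terms such as $\Tr(B^{ij}B^{ji})$ or $\Tr(B R_i)$ are of degree two or three. In the simply-laced quiver $\mathcal{G}$ (no loops, no multiple edges) the only oriented cycles of length $\le 4$ up to cyclic rotation are: the $2$-cycle across a single edge, the oriented triangle, the oriented square, and the ``bowtie'' formed by two $2$-cycles sharing a vertex. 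One then matches each summand with one of these: $\Tr(R_iR_j)$ is a square through the $\infty$-vertex, $\Tr(B^{kl}R_i)$ is a triangle, $\Tr(B^{ij}B^{ji})$ and the terms $\Tr(\widehat T R_i)$ contribute $2$-cycles, and the terms of the form $\Tr(R_iBR_i)$ (and the analogous ones with a $B$ crossing twice through a single node) give bowties.

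The main obstacle is the combinatorial bookkeeping of case~(i) the degenerate normal form and case~(ii) the generic normal form of~\eqref{eq:normal_degenerate_form}--\eqref{eq:normal_generic_form}, as well as the regular versus irregular isomonodromy times, and verifying that in every case the cycles that actually arise lie within the four prescribed shapes. This reduces to inspecting a finite list of explicit monomials in Boalch's Hamiltonians; the $k$-partite structure (absence of loops and of double edges) then immediately rules out any other cycle type of length $\le 4$.
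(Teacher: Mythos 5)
The paper does not actually prove this proposition; it is imported by citation from~\cite[Section~4]{rembado_2019_simply_laced_quantum_connections_generalising_kz}, so there is no in-text argument to compare against. Taken on its own terms, your plan captures the correct mechanism: expand the explicit simply-laced Hamiltonians of~\cite{boalch_2012_simply_laced_isomonodromy_systems} into a sum of trace monomials, translate each monomial into a trace of an oriented cycle in the complete $k$-partite quiver $\mathcal{G}$ (absorbing the diagonal operators $A$, $T$, $T^\infty$ into the time-dependent scalar coefficients), and then bound the cycle length by~$4$. That is precisely the strategy of the cited proof, so the structure is right.

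However the execution has gaps that would need to be closed before this counts as a proof. First, your enumeration of oriented cycles of length $\leq 4$ in a simply-laced quiver is incomplete: besides the $2$-cycle, triangle, square and ``bowtie'' there is also the repeated $2$-cycle $\alpha\alpha^*\alpha\alpha^*$, which visits only two nodes and one edge; you must either rule it out (for instance by showing that no $\Tr\big((B^{ij}B^{ji})^2\big)$ term appears in Boalch's formulas) or recognise it as a degenerate case of the fourth shape. Second, the example $\Tr(R_i B R_i)$ you offer as a degree-$4$ bowtie term is in fact degree~$5$ (each $R_i = Q^iP^i$ is already quadratic in the edge coordinates), so it should not appear at all; the actual bowties arise from terms of the form $\Tr(R_i R_j)$ or $\Tr(B^{ij}B^{ji}B^{ik}B^{ki})$, and whether $\Tr(R_iR_j)$ contributes a square or a bowtie depends on whether the two intermediate nodes through $U^\infty$ coincide, a case distinction you elide. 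Third, and most importantly, the degree-$\leq 4$ bound on $H_i$ and the precise matching of each monomial with a cycle shape are stated but never derived; your final paragraph acknowledges that ``this reduces to inspecting a finite list of explicit monomials in Boalch's Hamiltonians,'' which is exactly where the substance of the proof lies. Until that inspection is carried out, the argument is a plausible outline rather than a proof.
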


In particular, since the invariant part $A_0^{\widehat{H}} = A_0^{\mathfrak{h}}$ coincides with the algebra generated by traces of cycles \cite[Theorem~1]{lebruyn_procesi_1990_semisimple_representations_of_quivers}, one sees that the simply-laced Hamiltonian descend to a time-dependent system on the symplectic fibration of moduli spaces of meromorphic connections, as stated in Section~\ref{sec:slims}.

\begin{Definition}The potential $W_i$ is the isomonodromy potential at the node $i \in I$. The cycles of Fig.~\ref{fig:imd_cycles} are the isomonodromy cycles; the rightmost cycle of Fig.~\ref{fig:imd_cycles} is a degenerate 4-cycle.
\end{Definition}

\subsection{Traces of quantum potentials}\label{sec:quantum_trace}

The quantisation of the simply-laced Hamiltonians is constructed as follows. Let $\mathcal{G}$ be the complete $k$-partite quiver of the previous section.

\begin{Definition}An \textit{anchored cycle} in $\mathcal{G}$ is as an oriented cycle in $\mathcal{G}$ with the choice of a starting arrow.
\end{Definition}

The goal is to define the trace of such a cycle in a given representation of $\mathcal{G}$ in $V = \bigoplus_{i \in I} V_i$, so that this trace lives in the Weyl algebra $A$ of Section~\ref{sec:filtered_quantisation}. Further, this will be upgraded to an $\hslash$-deformed taking values in the Rees algebra~\eqref{eq:rees_algebra}.

Choose then bases of $V_i$ for $i \in I$, or equivalently take a dimension vector $d = (d_i)_{i \in I} \in \mathbb{Z}_{\geq 0}^I$ for the quiver and let $V_i \coloneqq \mathbb{C}^{d_i}$.
Then the linear coordinates $B \longmapsto B_{\alpha}^{ij} \in \mathbb{C}$ of a representation
\begin{equation*}
 B = (B_{\alpha})_{\alpha \in \mathcal{G}_1} \in \bigoplus_{\alpha \in \mathcal{G}_1} \Hom\big(V_{s(\alpha)},V_{t(\alpha)}\big) = \mathbb{M}
\end{equation*}
are defined. Putting this linear functions together in a matrix yields elements inside
\begin{equation*}
 \Hom\big(V_{s(\alpha)},V_{t(\alpha)}\big) \otimes \mathbb{M}^* \subseteq \Hom(V_{s(\alpha)},V_{t(\alpha)}) \otimes A_0,
\end{equation*} and the coefficient-wise Weyl quantisation $\mathbb{M}^* \hookrightarrow A$ yields matrices with coefficients in the Weyl algebra, that is elements
\begin{equation*}
 \widehat{B}_{\alpha} \in \Hom\big(V_{s(\alpha)},V_{t(\alpha)}\big) \otimes A.
\end{equation*}

Now suppose $\widehat{C} = \alpha_n \cdots \underline{\alpha_1}$ is a cycle anchored at $\alpha_1$, where the anchor is underlined~-- composing arrows as linear maps, from right to left. Then the matrix product
\begin{equation*}
 \widehat{B}_{\alpha_n} \dotsm \widehat{B}_{\alpha_1} \in \End\big(V_{s(\alpha_1)}\big) \otimes A,
\end{equation*}
is well defined by
\begin{equation*}
 \big(\widehat{B}_{\alpha} \cdot \widehat{B}_{\beta}\big)_{kl} \coloneqq \sum_m \widehat{B}_{\alpha}^{km} \ast_{\bm{a}} \widehat{B}_{\beta}^{ml} \in A,
\end{equation*}
using the coefficient-wise product of $A$ for $\alpha, \beta \in \mathcal{G}_1$. Then one can take a trace:
\begin{equation}\label{eq:quantum_trace}
 \Tr_a(\widehat{C}) \coloneqq \Tr\big(\widehat{B}^{\alpha_n} \dotsm \widehat{B}^{\alpha_1}\big) \in A.
\end{equation}

\begin{Remark}It would be equivalent to do the following. Consider matrices with coefficients in the ring $\Tens(\mathbb{M}^*)$, define their product, take their trace (now an element of $\Tens(\mathbb{M}^*)$) and then compose with the canonical projection $\pi_1 \colon \Tens(\mathbb{M}^*) \to A$.
\end{Remark}

The trace~\eqref{eq:quantum_trace} is not invariant under all cyclic permutations, since the coefficients of~$\widehat{B}_{\alpha}$ and~$\widehat{B}_{\beta}$ need not commute inside~$A$. Indeed they commute if and only if~$\alpha$ is not the arrow opposite to $\beta$, which leads to the following definition.

\begin{Definition}An admissible permutation of the arrows of an anchored cycle $\widehat{C} = \alpha_n \cdots \underline{\alpha_1}$ consists of splitting the cycle in two paths $A_1 = \alpha_n \cdots \alpha_i$, $A_2 = \alpha_{i-1} \cdots \alpha_1$ such that no arrow of $A_1$ has its opposite in $A_2$, and swap them to obtain the new anchored cycle $C' = \alpha_{i-1} \cdots \alpha_1 \cdot \alpha_n \cdots \underline{\alpha_i}$ after concatenation.
\end{Definition}

\begin{Example}[admissible permutation for degenerate 4-cycles]\label{example:anchor_degenerate_4_cycles}If $\alpha$ and $\beta$ are arrows based at the same node, with opposite arrows $\alpha^*$ and $\beta^*$ respectively, then $C = \beta^*\beta\alpha^*\alpha$ is a degenerate $4$-cycle (as the rightmost cycle of Fig.~\ref{fig:imd_cycles}). Then $C' = \alpha^*\alpha\beta^*\beta$ is an admissible permutation, whereas $C'' = \alpha\beta^*\beta\alpha^*$ is not.
\end{Example}

Let $\widehat{\mathbb{C}\mathcal{G}}_{\cycl}$ be the complex vector space generated by anchored cycles, defined up to admissible permutations of their arrows. The elements of $\widehat{\mathbb{C}\mathcal{G}}_{\cycl}$ are called quantum potentials, with terminology suggested by the fact that~\eqref{eq:quantum_trace} defines a map $\Tr_{\bm{a}} \colon \widehat{\mathbb{C}\mathcal{G}}_{\cycl} \to A$, which is the analogue to $\Tr \colon \mathbb{C}\mathcal{G}_{\cycl} \to A_0$.

Moreover this notion of quantum trace admits an $\hslash$-deformed version given by
\begin{equation*}
 \Tr_{\bm{a},\hslash} \colon \ \widehat{\mathbb{C}\mathcal{G}}_{\cycl} \longrightarrow \Rees(A,\mathcal{B}) \subseteq \widehat{A}, \qquad \Tr_{\bm{a},\hslash}\big(\widehat{C}\big) \coloneqq \Tr_a\big(\widehat{C}\big) \cdot \hslash^{l(C)},
\end{equation*}
where $l(C) \geq 0$ is the length of the oriented cycle underlying $\widehat{C}$. Then forgetting the anchor provides a surjection $\sigma_{\mathcal{G}} \colon \widehat{\mathbb{C}\mathcal{G}}_{\cycl} \to \mathbb{C}\mathcal{G}_{\cycl}$, and the semiclassical limit $\sigma \colon \widehat{A} \to A_0$ of equation~\eqref{eq:semiclassical_limit} intertwines it with taking traces.

The upshot is that one can quantise linear combinations of traces of oriented cycles in $\mathcal{G}$ by taking suitable anchors for each cycle (and this construction generalises to any quiver).

To explain the choice which produces an integrable quantum system out of~\eqref{eq:slims} consider again an oriented cycle $C$ in $\mathcal{G}$. The cycle defines a subquiver $C =(C_0,C_1)$ of $\mathcal{G}$, and the (even) degree of each node $i \in C_0$ is defined as the number of arrows $e \in C_1$ adjacent to it:
\begin{equation*}
 \deg(i) \coloneqq \Card \{e \in C_1 \,|\, i \in \{s(e),t(e)\} \} \geq 2.
\end{equation*}
Let then
\begin{equation*}
 \max(C) \coloneqq \{ i \in C_0 \,|\, \deg(i) \text{ is maximal} \} \subseteq C_0
\end{equation*}
be the subset of nodes of maximal degree, set
\begin{equation*}
 m(C) \coloneqq \Card  (\max(C) )
\end{equation*}
to be the number of nodes of maximal degree, and finally
\begin{equation*}
 \deg(C) \coloneqq \max_{i \in C_0} \deg(i) = \max_{i \in \max(C)} \deg(i)
\end{equation*}
the maximal degree of a node in $C$.

\begin{Definition}\label{def:quantisation_of_potentials}The quantisation of the cycle $C$ is the quantum potential $\widehat{C} \in \widehat{\mathbb{C}\mathcal{G}}_{\cycl}$ defined by
\begin{equation}\label{eq:quantisation_of_potentials}
 \widehat{C} \coloneqq w_{C} \sum_{i \in \max(C)}\sum_{e \in s^{-1}(i)} \widehat{C}_e,
\end{equation}
up to admissible permutations, where $\widehat{C}_e$ denotes the cycle obtained by anchoring $C$ at $e$ and setting
\begin{equation*}
 w_{C} \coloneqq \frac{2}{\deg(C)m(C)} \in \mathbb{Q}_{> 0}.
\end{equation*}
\end{Definition}

\begin{Remark}In words one anchors $C$ at all possible arrows whose source is a node of maximal degree, takes the sum of that, and multiplies by the weight $w_{C}$. The number of arrows starting at a node of maximal degree is $\frac{\deg(C)}{2}$, so that $w_{C}^{-1} = \frac{\deg(C)m(C)}{2}$ is the overall number of arrows starting at nodes of maximal degree; thus dividing by it assures that $\sigma_{\mathcal{G}}\big(\widehat{C}\big) = C$.
\end{Remark}

Then Definition~\ref{def:quantisation_of_potentials} extends to the whole of $\mathbb{C}\mathcal{G}_{\cycl}$ by $\mathbb{C}$-linearity, so that there is a preferred section $\mathcal{Q}_{\mathcal{G}} \colon \mathbb{C}\mathcal{G}_{\cycl} \to \widehat{\mathbb{C}\mathcal{G}}_{\cycl}$ of the anchor forgetting map.

If in particular a cycle does not contain a pairs of opposite arrows then Definition~\ref{def:quantisation_of_potentials} reduces to take an anchor at any nodes, since all possible anchors are related by admissible permutations. This is the case of the middle cycles of Fig.~\ref{fig:imd_cycles}, whereas the 2-cycles and degenerate 4-cycles are explicitly quantised as follows.

\begin{Definition}[from Definition~\ref{def:quantisation_of_potentials}]\label{def:quantisation_imd_potentials}
The quantisation of a degenerate 4-cycles is the quantum cycle having the same underlying classical cycle, anchored at either of the two arrows coming out of its centre.\footnote{This follows from Example~\ref{example:anchor_degenerate_4_cycles}.} The quantisation of a two cycle $C = $~\begin{tikzpicture}[scale=0.8]
 \vertex (a) at (0,0) {};
 \vertex (b) at (1,0) {};
 \path
 (a) edge [bend left = 20] (b)
 (b) edge [bend left = 20] (a);
 \end{tikzpicture}
 is by definition the quantum potential
\[
 \widehat{C} = \frac{1}{2}\bigg($\begin{tikzpicture}
 \vertex (a) at (0,0) [fill = black] {};
 \vertex (b) at (1,0) {};
 \path
 (a) edge [bend left = 20] (b)
 (b) edge [bend left = 20] (a);
 \end{tikzpicture} $+$ \begin{tikzpicture}
 \vertex (a) at (0,0) {};
 \vertex (b) at (1,0)[fill = black] {};
 \path
 (a) edge [bend left = 20] (b)
 (b) edge [bend left = 20] (a);
 \end{tikzpicture}$\bigg),
\]
where the black nodes correspond to the source of the anchors.
\end{Definition}

Now if $C$ is an oriented cycle one can quantise $\Tr(C) \in A_0$ by
\begin{equation*}
 \mathcal{Q}_{\bm{a},\hslash} \Tr(C) \coloneqq \Tr_{\bm{a},\hslash} \big(\mathcal{Q}_{\mathcal{G}} (C)\big) \in \Rees(A,\mathcal{B}) \subseteq \widehat{A},
\end{equation*}
since indeed
\begin{equation*}
 \sigma \circ \mathcal{Q}_{\bm{a},\hslash} \big(\Tr(C)\big) = \Tr \big(\sigma_{\mathcal{G}} \circ \mathcal{Q}_{\mathcal{G}} (C)\big) = \Tr(C) \in A_0.
\end{equation*}
Hence $\mathcal{Q}_{\bm{a},\hslash}$ is extended by $\mathbb{C}$-linearity to a quantisation map defined on $\Tr(\mathbb{C}\mathcal{G}_{\cycl}) \subseteq A_0$, and a quantisation of the simply-laced isomonodromy Hamiltonian $H_i = \Tr(W_i) \in \mathbb{C}[\mathbf{B}] \otimes A_0$ is provided by
\begin{equation*}
 \widehat{H}_i \coloneqq \mathcal{Q}_{\bm{a},\hslash} \Tr(W_i) = \Tr_{\bm{a},\hslash} (\mathcal{Q}_{\mathcal{G}} \circ W_i) \in \mathbb{C}[\mathbf{B}] \otimes \Rees(A,\mathcal{B}) \subseteq \mathbb{C}[\mathbf{B}] \otimes \widehat{A}.
\end{equation*}

\begin{Theorem}[\cite{rembado_2019_simply_laced_quantum_connections_generalising_kz}]\label{theorem:slqc}
The time-dependent quantum Hamiltonian system $\widehat{H} \in \mathbb{C}[\mathbf{B}] \otimes \widehat{A}^I$ with components $\widehat{H}_i$ is strongly integrable, which means that
\begin{equation}\label{eq:quantum_flatness}
 \frac{\partial \widehat{H}_i}{\partial t_j} - \frac{\partial \widehat{H}_j}{\partial t_i} = 0 = \big[\widehat{H}_i,\widehat{H}_j\big], \qquad \text{for all } i,j \in I.
\end{equation}
\end{Theorem}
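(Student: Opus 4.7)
The strategy is to reduce the quantum flatness~\eqref{eq:quantum_flatness} to a combinatorial identity in the space of anchored cycles, which then propagates to $\widehat{A}$ via the quantum trace $\Tr_{\bm{a},\hslash}$.

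For the derivative identity, the time-dependence of $\widehat{H}_i$ is entirely confined to the classical isomonodromy potential $W_i \colon \mathbf{B} \to \mathbb{C}\mathcal{G}_{\cycl}$, since $\mathcal{Q}_{\mathcal{G}}$ and $\Tr_{\bm{a},\hslash}$ are $\mathbb{C}$-linear and do not depend on $t_i$. Hence
\begin{equation*}
\frac{\partial \widehat{H}_i}{\partial t_j} - \frac{\partial \widehat{H}_j}{\partial t_i} = \Tr_{\bm{a},\hslash} \bigl( \mathcal{Q}_{\mathcal{G}}(\partial_{t_j} W_i - \partial_{t_i} W_j) \bigr).
\end{equation*}
I would verify the stronger identity $\partial_{t_j} W_i = \partial_{t_i} W_j$ already in $\mathbb{C}\mathcal{G}_{\cycl}$ by direct computation on the explicit formulas for $W_i$ as linear combinations of the four types of isomonodromy cycles of Fig.~\ref{fig:imd_cycles}; the classical flatness of Theorem~\ref{theorem:slims} guarantees this holds after applying $\Tr$, and an inspection shows no trace-kernel ambiguities obstruct the lift.

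For the commutator identity, I would expand $[\Tr_{\bm{a},\hslash}(\widehat{C}),\Tr_{\bm{a},\hslash}(\widehat{C}')]$ for two anchored cycles $\widehat{C},\widehat{C}'$ by applying the Weyl commutation relations termwise along each cycle. These relations are nontrivial only between the coordinates $\widehat{B}^{ij}_{kl}$ and $\widehat{B}^{ji}_{lk}$ attached to an arrow $\alpha$ and its opposite $\alpha^*$, with bracket proportional to $\varepsilon_{\alpha}(\bm{a})$. Consequently the commutator decomposes as a finite sum of quantum traces over new cycles obtained by contracting a pair $(\alpha,\alpha^*)$ with $\alpha$ in one factor and $\alpha^*$ in the other, and then concatenating the remaining paths. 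Semiclassically this sum reproduces $\{H_i,H_j\}_{\bm{a}}$, which vanishes by Theorem~\ref{theorem:slims}.

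The main obstacle is to promote this semiclassical cancellation to an exact identity in $\widehat{A}$, rather than merely modulo $\hslash$. This is where the precise anchoring and weighting prescription of Definition~\ref{def:quantisation_of_potentials} becomes essential: the uniform averaging over arrows emanating from nodes of maximal degree is designed so that each quantum correction arising from re-anchoring an admissibly permuted cycle is matched by an equal and opposite correction from the symmetric pairing. I would then carry out a case-by-case combinatorial analysis over the four types of isomonodromy cycles, checking that the arrow-opposite-arrow contractions between $\widehat{W}_i$ and $\widehat{W}_j$ cancel pairwise. The full verification was performed in~\cite{rembado_2019_simply_laced_quantum_connections_generalising_kz}, yielding the strong integrability of the system $\widehat{H}$.
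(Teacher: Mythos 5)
The paper does not actually prove Theorem~\ref{theorem:slqc}: the statement is quoted verbatim from~\cite{rembado_2019_simply_laced_quantum_connections_generalising_kz}, and no argument is given here. Your attempt is therefore not being compared against a proof in this paper but against the expectation that you reproduce the argument from the cited reference. Your sketch correctly identifies the two mechanisms at play (linearity of $\Tr_{\bm{a},\hslash} \circ \mathcal{Q}_{\mathcal{G}}$ for the de~Rham part, contraction of opposite-arrow pairs via the Weyl relations~\eqref{eq:commutation_relations_quantum} for the commutator part), and it correctly locates the difficulty in promoting semiclassical vanishing to exact vanishing in $\widehat{A}$. But it is a strategy description, not a proof: the paragraph that is supposed to contain the content (``carry out a case-by-case combinatorial analysis over the four types of isomonodromy cycles'') stops short and defers back to the very reference you are meant to be reproving, which is circular.

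Two further points would need to be addressed before the sketch could be upgraded. First, you assert $\partial_{t_j} W_i = \partial_{t_i} W_j$ already in $\mathbb{C}\mathcal{G}_{\cycl}$, strengthening Theorem~\ref{theorem:slims} (which only gives the identity after applying $\Tr$). This is plausible but is a claim, not a consequence of anything in the paper; relatedly, the map $\mathcal{Q}_{\bm{a},\hslash}$ is only defined on $\Tr(\mathbb{C}\mathcal{G}_{\cycl})$ ``by $\mathbb{C}$-linearity,'' which tacitly requires that the anchoring prescription respect linear relations among traces of cycles, a well-definedness issue you do not address. Second, the claim that ``each quantum correction arising from re-anchoring an admissibly permuted cycle is matched by an equal and opposite correction from the symmetric pairing'' is the entire content of the theorem, and it is asserted rather than argued; the weights $w_C = 2/(\deg(C)\,m(C))$ of Definition~\ref{def:quantisation_of_potentials} are precisely tuned for this cancellation, so a proof must exhibit the pairing explicitly for each of the cycle types in Fig.~\ref{fig:imd_cycles} and for each relevant time derivative. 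Without that computation the argument remains a citation.
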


The final output is thus a bundle $\widehat{A} \times \mathbf{B} \to \mathbf{B}$ of noncommutative $\mathbb{C} \llbracket \hslash \rrbracket$-algebras equipped with a strongly flat connection $\widehat{\nabla} = {\rm d} - \widehat{\varpi}$, where $\widehat{\varpi} = \sum_{i \in I} \widehat{H}_i \,{\rm d}t_i$ and the quantum Hamiltonians act on $\widehat{A}$ by their adjoint action, i.e., their commutator. The strong flatness of the connection is expressed by the identities ${\rm d}\widehat{\varpi} = 0 = [\widehat{\varpi},\widehat{\varpi}]$, which are a compact version of~\eqref{eq:quantum_flatness}.

\begin{Definition}\label{def:slqc}The connection $\widehat{\nabla}$ is the universal simply-laced quantum connection attached to the data of the partitioned set $I = \coprod_{j \in J} I^j$, the $I$-graded space $V = \bigoplus_{i \in I} V_i$, and the embedding $\bm{a} \colon J \hookrightarrow \mathbb{C}^2 \cup \{\infty\}$ of the set of parts of $I$. The Hamiltonians $\widehat{H}_i$ are the universal simply-laced quantum Hamiltonians.
\end{Definition}

The quantum connection defines linear differential equations for $\widehat{A}$-valued functions on $\mathbf{B}$, which are by definition the \textit{quantum isomonodromy equations}.

\begin{Remark}The connection of Definition~\ref{def:slqc} is ``universal'' since one can replace the quantum algebra $\widehat{A}$ with any left $\widehat{A}$-module $\rho \colon \widehat{A} \to \End(\mathcal{V})$, and let $\widehat{H}_i$ act on $\mathcal{V}$ in the given representation. The resulting connection in the vector bundle $\mathcal{V} \times \mathbf{B} \to \mathbf{B}$ is strongly flat, and computing its monodromy provides representations of $\pi_1(\mathbf{B})$ on $\mathcal{V}$~-- i.e., representations of arbitrary finite products of pure braid groups.

A particular important example is obtained from the quotient modulo the ideal generated by $\hslash - 1$, which yields a surjective morphism $\widehat{A} \to A$. In this case one obtains a strongly flat connection in the bundle $A \times \mathbf{B} \to \mathbf{B}$: it is the simply-laced quantum connection, genera\-li\-sing the Knizhnik--Zamolodchikov connection~\cite{knizhnik_zamolodchikov_1984_current_algebra_and_wess_zumino_model_in_two_dimensions}, the Casimir connection~\cite{millson_toledanolaredo_2005_casimir_operators_and_monodromy_representations_of_generalised_braid_groups} and the FMTV connection~\cite{felder_markov_tarasov_varchenko_2000_differential_equations_compatible_with_kz_equations}, in the sense explained in~\cite{rembado_2019_simply_laced_quantum_connections_generalising_kz}.

\end{Remark}

\subsection{Compatibility of action and quantisation}
\label{sec:action_quantisation_compatibility}

We concluce this section by showing that $\mathcal{Q}_{\bm{a},\hslash}$ is compatible with the classical and quantum actions of $\SL_2(\mathbb{C})$-action of Sections~\ref{sec:classical_symmetries} and~\ref{sec:quantum_action}~-- respectively.

\begin{Lemma}
\label{lemma:quantisation_and_action_commute}

The quantisation map intertwines the $\SL_2(\mathbb{C})$-actions on traces of cycles:
\begin{equation*}
 \widehat{\varphi}^*_{g,\hslash}(\bm{a}) \circ \mathcal{Q}_{\bm{a},\hslash} = \mathcal{Q}_{\bm{a},\hslash} \circ \varphi^*_g(\bm{a}) \quad \text{on} \ \Tr(\mathbb{C}\mathcal{G}_{\cycl}), \qquad \text{for} \quad g \in \SL_2(\mathbb{C}), \  \bm{a} \in \mathbf{A}.
\end{equation*}
\end{Lemma}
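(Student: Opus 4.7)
The plan is to reduce to a single cycle, compute both sides explicitly, and observe that each side rescales by the same scalar, because the $\SL_2(\mathbb{C})$-action on the linear coordinates $B_\alpha$ (respectively, on their Weyl quantisations $\widehat{B}_\alpha$) amounts to multiplication by a single scalar $\eta_{t(\alpha)}(g,\bm{a})$ depending only on the target of $\alpha$.

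First, by $\mathbb{C}$-linearity of all four maps involved, it suffices to check
\begin{equation*}
\widehat{\varphi}^*_{g,\hslash}(\bm{a}) \circ \mathcal{Q}_{\bm{a},\hslash}\bigl(\Tr(C)\bigr) = \mathcal{Q}_{\bm{a},\hslash} \circ \varphi^*_g(\bm{a}) \bigl(\Tr(C)\bigr),
\end{equation*}
for a single oriented cycle $C = \alpha_n \cdots \alpha_1$ in $\mathcal{G}$. Set $\eta_C \coloneqq \prod_{p=1}^n \eta_{t(\alpha_p)}(g,\bm{a}) \in \mathbb{C}^\times$. The point is that $\eta_C$ depends only on the \emph{multiset} of arrows of $C$, hence in particular it is independent of the cyclic ordering and of the anchor.

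Second, compute the right-hand side. By Lemma~\ref{lemma:compatibility_moment_classical_action} (or equivalently~\eqref{eq:action_on_fibration}) the classical pull-back acts on linear coordinates by $\varphi^*_g(\bm{a})\, B_\alpha^{kl} = \eta_{t(\alpha)}(g,\bm{a})\, B_\alpha^{kl}$. Since $\Tr(C) = \Tr(B_{\alpha_n}\cdots B_{\alpha_1})$ is a sum of degree-$n$ monomials with one factor per arrow of $C$, and since $\varphi^*_g(\bm{a})$ is an algebra morphism, one obtains $\varphi^*_g(\bm{a})\Tr(C) = \eta_C \cdot \Tr(C)$, and then by $\mathbb{C}$-linearity $\mathcal{Q}_{\bm{a},\hslash}\bigl(\varphi^*_g(\bm{a})\Tr(C)\bigr) = \eta_C \cdot \mathcal{Q}_{\bm{a},\hslash}\Tr(C)$.

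Third, compute the left-hand side. By Definition~\ref{def:quantisation_of_potentials} one has
\begin{equation*}
\mathcal{Q}_{\bm{a},\hslash}\Tr(C) = w_C \sum_{i \in \max(C)} \sum_{e \in s^{-1}(i)} \Tr_{\bm{a},\hslash}\bigl(\widehat{C}_e\bigr),
\end{equation*}
and each $\Tr_{\bm{a},\hslash}(\widehat{C}_e) = \hslash^n \sum \widehat{B}^{i_n j_n}_{k_n k_{n-1}} \ast_{\bm{a}} \cdots \ast_{\bm{a}} \widehat{B}^{i_1 j_1}_{k_1 k_n}$ is a $\mathbb{C}$-linear combination of length-$n$ monomials in the generators $\widehat{B}^{ij}_{kl}$, the set of arrows appearing being precisely the arrows of $C$. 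The explicit formula~\eqref{eq:quantum_action} then multiplies every such monomial by the same factor $\eta_C$, so $\widehat{\varphi}^*_{g,\hslash}(\bm{a})\Tr_{\bm{a},\hslash}(\widehat{C}_e) = \eta_C \cdot \Tr_{\bm{a},\hslash}(\widehat{C}_e)$ for each anchor $e$, and summing gives $\widehat{\varphi}^*_{g,\hslash}(\bm{a})\mathcal{Q}_{\bm{a},\hslash}\Tr(C) = \eta_C \cdot \mathcal{Q}_{\bm{a},\hslash}\Tr(C)$. Comparing with step two concludes the proof.

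There is no serious obstacle here; the whole argument hinges on two facts already recorded in the paper, namely that the classical and quantum actions both act \emph{scalarly} on each $B_\alpha$ (respectively $\widehat{B}_\alpha$) with the same scalar $\eta_{t(\alpha)}(g,\bm{a})$, and that formula~\eqref{eq:quantum_action} holds for arbitrary monomials. The mildly subtle step is to notice that, because $\eta_C$ is a product of commuting scalars associated to the arrows, it is insensitive to the order of factors in the noncommutative monomials appearing in each $\Tr_{\bm{a},\hslash}(\widehat{C}_e)$, and in particular the identity holds before summing over anchors and weighting by $w_C$.
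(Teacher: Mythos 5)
Your proof is correct and follows essentially the same route as the paper's: reduce by linearity to a single cycle $C$, observe that both $\varphi^*_g(\bm{a})$ and $\widehat{\varphi}^*_{g,\hslash}(\bm{a})$ act on (Weyl-quantised) linear coordinates by the same scalar $\eta_{t(\alpha)}(g,\bm{a})$ per arrow, and hence both sides equal $\eta_C$ times the quantised trace, with $\eta_C$ independent of the anchor. The only cosmetic difference is that you write $\eta_C$ as a product over the positions $p=1,\dots,n$ of the cycle word rather than over the arrow set $C_1$ as in the paper; yours is the more careful formulation if one allows repeated arrows, though the two coincide for the isomonodromy cycles.
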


\begin{proof}Let $C$ be an oriented cycle in $\mathcal{G}$, and $C_1 \subseteq \mathcal{G}_1$ its set of arrows as subquiver. Then~\eqref{eq:classical_action_decomposition} implies that
\begin{equation*}
 \varphi^*_g(\bm{a}) \Tr(C) = \eta \Tr(C), \qquad \text{where} \quad \eta \coloneqq \prod_{e \in C_1} \eta_{t(e)}(g,\bm{a}) \in \mathbb{C}^*.
\end{equation*}
Hence the subspace $\Tr(\mathbb{C}\mathcal{G}_{\cycl}) \subseteq A_0$ is preserved by the action, and the composition $\mathcal{Q}_{\bm{a},\hslash} \circ \varphi^*_g(\bm{a})$ is defined. Then the linearity of the quantisation map yields
\begin{equation*}
 \mathcal{Q}_{\bm{a},\hslash} \circ \varphi^*_g(\bm{a}) \big(\Tr(C)\big) = \eta \mathcal{Q}_{\bm{a},\hslash} \Tr(C) = \eta \Tr_{\bm{a},\hslash}\big(\mathcal{Q}_{\mathcal{G}}(C)\big).
\end{equation*}

Similarly, if $e \in C_1$ is fixed and $\widehat{C}_e$ is the quantum cycle anchored at $e$ then~\eqref{eq:quantum_action} yields
\begin{equation*}
 \widehat{\varphi}^*_{g,\hslash}(\bm{a}) \Tr_{\bm{a},\hslash} \big(\widehat{C}_e\big) = \eta \Tr_{\bm{a},\hslash} \big(\widehat{C}_e\big),
\end{equation*}
with the same number $\eta$ independently of the choice of the anchor. Hence by $\mathbb{C}$-linearity~\eqref{eq:quantisation_of_potentials} gives
\begin{equation*}
 \widehat{\varphi}^*_{g,\hslash}(\bm{a}) \circ \mathcal{Q}_{\bm{a},\hslash} \big(\Tr(C)\big) = \widehat{\varphi}^*_{g,\hslash}(\bm{a}) \Tr_{\bm{a},\hslash} \big(\mathcal{Q}_{\mathcal{G}}(C)\big) = \eta \Tr_{\bm{a},\hslash}\big(\mathcal{Q}_{\mathcal{G}}(C)\big).\tag*{\qed}
\end{equation*}\renewcommand{\qed}{}
\end{proof}

\section{Quantum Hamiltonian reduction}
\label{sec:quantum_hamiltonian_reduction}

We introduce in this section the algebraic formalism of quantum Hamiltonian reduction, first in filtered quantisation and then in formal deformation quantisation (see~\cite{etingof_2007_calogero_moser_systems_and_representation_theory}).

\subsection{Reduction for filtered quantisation}\label{sec:quantum_hamiltonian_reduction_filtered}

Let $B$ be an associative algebra equipped with an action of a Lie algebra $\mathfrak{g}$, i.e., a morphism $\rho \colon \mathfrak{g} \to \Der(B)$ of Lie algebras.

\begin{Definition}\label{def:quantum_comoment}A quantum comoment map for the action is a morphism $\widehat{\mu}^* \colon U(\mathfrak{g}) \to B$ of associative algebras lifting $\rho$ through the adjoint action of $B$ on itself:
\begin{equation*}
 \rho(x).b = \big[\widehat{\mu}^*(x),b\big], \qquad \text{for} \quad x \in \mathfrak{g}, \  b \in B.
\end{equation*}
\end{Definition}

Let now $\widehat{\mathcal{I}} \subseteq U(\mathfrak{g})$ be a two-sided ideal.

\begin{Definition}\label{def:filtered_quantum_hamiltonian_reduction}The quantum Hamiltonian reduction of $B$ with respect to the quantum comoment $\widehat{\mu}^*$ and the ideal $\widehat{\mathcal{I}}$ is the quotient
\begin{equation*}
 R\big(B,\mathfrak{g},\widehat{\mathcal{I}}\big) \coloneqq B^{\mathfrak{g}} \big\slash \widehat{\mathcal{J}}^{\mathfrak{g}},
\end{equation*}
where $B^{\mathfrak{g}}$ is ring of $\mathfrak{g}$-invariants, $\widehat{\mathcal{J}} \subseteq B$ is the two-sided ideal generated by $\widehat{\mu}^*\big(\widehat{\mathcal{I}}\big)$, and $\widehat{\mathcal{J}}^{\mathfrak{g}} \coloneqq \widehat{\mathcal{J}} \cap B^{\mathfrak{g}}$.
\end{Definition}

One can show that $\widehat{\mathcal{J}}^{\mathfrak{g}} \subseteq B^{\mathfrak{g}}$ is a two-sided ideal, and thus the reduction is canonically an associative algebra.

This construction can then be related to filtered quantisation, as follows. Assume there exist a~grading $\mathcal{B}_0$ on $B_0$ and a~filtration $\mathcal{B}$ on $B$ such that the associative filtered algebra $(B,\mathcal{B})$ is a~filtered quantisation of the graded commutative Poisson algebra $(B_0,\mathcal{B}_0)$, and that there is a~classical comoment map $\mu^* \colon \Sym(\mathfrak{g}) \to B_0$.

\begin{Definition}\label{def:filtered_quantisation_comoment}A filtered quantisation of $\mu^*$ is a quantum comoment $\widehat{\mu}^* \colon U(\mathfrak{g}) \to B$ whose associated graded equals $\mu^*$ in the identifications $\gr(B) \cong B_0$ and $\gr \big( U(\mathfrak{g})\big) \cong \Sym(\mathfrak{g})$.
\end{Definition}

This implies that the action $\rho = \{\mu^*,\cdot\}$ on $B_0$ is quantised by the $\mathfrak{g}$-action $\widehat{\rho} \coloneqq \big[\widehat{\mu}^*,\cdot\big]$ on $B$, i.e., $\gr \widehat{\rho}(x) = \rho(x) \in \Der(B_0)$ for $x \in \mathfrak{g}$.

\subsection{Reduction for formal deformation quantisation}\label{sec:quantum_hamiltonian_reduction_formal}

Suppose $\widehat{B}$ is a deformation algebra, that is a topologically free $\mathbb{C} \llbracket \hslash \rrbracket$ algebra (a formal deformation of $\widehat{B} \big\slash \hslash\widehat{B}$), equipped with an action $\rho \colon \mathfrak{g} \to \Der \big(\widehat{B}\big)$ of the Lie algebra $\mathfrak{g}$.

\begin{Definition}\label{def:quantum_comoment_formal}A quantum comoment map for the action is a morphism
\begin{equation*}
 \widehat{\mu}^* \colon U(\mathfrak{g}) \longrightarrow \widehat{B}\big[\hslash^{-1}\big]
\end{equation*}
of associative algebras lifting $\rho$ through the adjoint action of $\widehat{B}$ on itself.
\end{Definition}

One allows for negative powers of $\hslash$ since $\big[\widehat{B},\widehat{B}\big] \subseteq \hslash^k \widehat{B}$ for some minimal integer $k > 0$ when $B_0 \coloneqq \widehat{B} \big\slash \hslash \widehat{B}$ is commutative. This is precisely the relevant case for deformation quantisation (for example with the algebra~\eqref{eq:completion_rees} one has $k = 2$).

Assume then further that $\widehat{B}$ is a deformation quantisation of the commutative Poisson algebra $(B_0,\{\cdot,\cdot\})$, which means that
\begin{equation*}
 \sigma\big([f,g] \cdot \hslash^{-k}\big) = \big\{\sigma(f),\sigma(g)\big\} \in B_0, \qquad \text{for}\quad f,g \in \widehat{B},
\end{equation*}
where $\sigma \colon \widehat{B} \to B_0$ is the semiclassical limit (the projection modulo the ideal generated by $\hslash$), and let $\mu^* \colon \Sym(\mathfrak{g}) \to B_0$ a classical comoment map.

\begin{Definition}A quantum comoment map $\widehat{\mu}^* \colon U(\mathfrak{h}) \to \widehat{B}\big[\hslash^{-1}\big]$ is said to be a quantisation of the classical comoment map $\mu^*$ if
\begin{equation*}
 \sigma\big(\hslash^k\widehat{\mu}^*(x)\big) = \mu^*(x), \qquad \text{for} \quad x \in \mathfrak{g}.
\end{equation*}
\end{Definition}

This implies that the action $\rho = \{\mu^*,\cdot\}$ on $B_0$ is quantised by the $\mathfrak{g}$-action $\widehat{\rho}_{\hslash} \coloneqq \big[\widehat{\mu}^*,\cdot\big]$, since for $x \in \mathfrak{g}$ and $b \in \widehat{B}$ one has
\begin{equation*}
 \sigma \big(\widehat{\rho}_{\hslash}(x)\big).b = \sigma \big(\big[\hslash^k\widehat{\mu}^*(x),b\big] \hslash^{-k}\big) = \big\{\sigma\big(\hslash^k \widehat{\mu}^*(x)\big),\sigma(b)\big\} = \big\{\mu^*(x),\sigma(b)\big\}.
\end{equation*}

In this case the ring of $\mathfrak{g}$-invariants $\widehat{B}^{\mathfrak{g}} \subseteq \widehat{B}$ is the centraliser of the image of $\widehat{\mu}^*$, and an analogue to Definition~\ref{def:filtered_quantum_hamiltonian_reduction} is obtained by a $\hslash$-deformation of the quantum comoment. To introduce it consider the algebras $\Rees(B,\mathcal{B}) \subseteq \widehat{B}$ as in Section~\ref{sec:rees_construction}, and define $\widehat{\mu}^*_{\hslash}$ on $\widehat{U}(\mathfrak{g})$ by imposing
\begin{equation}\label{eq:deformed_quantum_comoment}
 \widehat{\mu}^*_{\hslash}(\hslash) \coloneqq \hslash, \qquad \widehat{\mu}^*_{\hslash}(x \hslash) \coloneqq \widehat{\mu}^*(x) \hslash^k, \qquad \text{for} \quad x \in \mathfrak{g}.
\end{equation}

Choose now a two-sided ideal $\widehat{\mathcal{I}}_{\hslash} \subseteq \widehat{U}(\mathfrak{g})$

\begin{Definition}The quantum Hamiltonian reduction of $\widehat{B}$ with respect to the $\hslash$-deformed quantum comoment $\widehat{\mu}^*_{\hslash}$ and the ideal $\widehat{\mathcal{I}}_{\hslash}$ is the quotient
\begin{equation*}
 R_{\hslash}\big(\widehat{B},\mathfrak{g},\widehat{\mathcal{I}}_{\hslash}\big) \coloneqq \widehat{B}^{\mathfrak{g}} \big\slash \widehat{\mathcal{J}}_{\hslash}^{\mathfrak{g}},
\end{equation*}
where $\widehat{\mathcal{J}}_{\hslash} \coloneqq \widehat{B} \widehat{\mu}^*_{\hslash}\big(\widehat{\mathcal{I}}_{\hslash}\big)$ is the ideal generated by the image of $\widehat{\mathcal{I}}_{\hslash}$ for the quantum comoment, and $\widehat{\mathcal{J}}_{\hslash}^{\mathfrak{g}} \coloneqq \widehat{\mathcal{J}}_{\hslash} \cap \widehat{B}^{\mathfrak{g}}$.
\end{Definition}

Finally, the material of Sections~\ref{sec:filtered_quantisation} and~\ref{sec:rees_construction} can be adapted to pass from the filtered setting to the deformation one in quantum Hamiltonian reduction.

Suppose then the formal deformation quantisation $\widehat{B}$ of $B_0$ is obtained from the filtered quantisation $(B,\mathcal{B})$ via the Rees construction, and let $\widehat{\mu}^* \colon U(\mathfrak{g}) \to B$ be a quantisation of the classical comoment $\mu^* \colon \Sym(\mathfrak{g}) \to B_0$ as in Definition~\ref{def:filtered_quantisation_comoment}. If the $\mathfrak{g}$-action $\widehat{\rho} = \big[\widehat{\mu}^*,\cdot\big]$ on $B$ preserves the filtration $\mathcal{B}$ then there is a natural induced action on $\widehat{B}$ by $\mathbb{C} \llbracket \hslash \rrbracket$-linearity:
\begin{equation*}
 \widehat{\rho}_{\hslash} \colon \ \sum_{k \geq 0} f_k \cdot \hslash^k \longmapsto \sum_k \widehat{\rho}(f_k) \cdot \hslash^k.
\end{equation*}
Moreover there is a natural inclusion $B[\hslash] \hookrightarrow \widehat{B}\big[\hslash^{-1}\big]$, writing
\begin{equation*}
 f \cdot \hslash^k = f \cdot \hslash^{\lvert f \rvert} \hslash^{k - \lvert f \rvert}, \qquad \text{for} \quad f \in B, \ k \geq 0,
\end{equation*}
and restricting it to $B$ turns the quantum comoment into a map $\widehat{\mu}^* \colon U(\mathfrak{g}) \to \widehat{B}\big[\hslash^{-1}\big]$ which generates $\widehat{\rho}_{\hslash}$ via the adjoint action. Hence this is a quantum comoment in deformation quantisation as in Definition~\ref{def:quantum_comoment_formal}, which can be $\hslash$-deformed to $\widehat{\mu}^*_{\hslash}$ as in~\eqref{eq:deformed_quantum_comoment}. This deformed quantum comoment then takes values in $\widehat{B}$, because the minimal integer $k \geq 1$ such that $\big[\widehat{B},\widehat{B}\big] \subseteq \hslash^k \widehat{B}$ coincides with the minimal integer $k$ such that $\widehat{\mu}^*(\mathfrak{g}) \subseteq \mathcal{B}_{\leq k}$~-- else $\widehat{\rho} = \big[\widehat{\mu}^*,\cdot\big]$ would not preserve the filtration.

Hence on the whole $\widehat{\mu}^*_{\hslash} \colon \widehat{U}(\mathfrak{g}) \to \widehat{B}$ is defined by
\begin{equation*}
 \widehat{\mu}^*_{\hslash} (\hslash) = \hslash, \qquad \widehat{\mu}^*_{\hslash} (x \hslash) = \widehat{\mu}^*(x) \cdot \hslash^{\lvert \widehat{\mu}^*(x)\rvert}, \qquad \text{for} \quad x \in \mathfrak{g},
\end{equation*}
and it intertwines the semiclassical limits $\sigma_{\mathfrak{g}} \colon \widehat{U}(\mathfrak{g}) \to \Sym(\mathfrak{g})$ and $\sigma \colon \widehat{B} \to B_0$ because
\begin{equation*}
 \sigma \circ \widehat{\mu}^*_{\hslash} (x\hslash) = \sigma_{\lvert \widehat{\mu}^*(x)\rvert} \circ \widehat{\mu}^*(x) = \mu^*(x) = \mu^*\big(\sigma_{\mathfrak{g}}(x \hslash)\big), \qquad \text{for} \quad x \in \mathfrak{g},
\end{equation*}
using~\eqref{eq:deformed_quantum_comoment}. So this is also a deformation quantisation of the classical comoment $\mu^*$.

Suppose finally to have two sets of data $\big(B,\mathfrak{g},\widehat{\mathcal{I}}\big)$ and $\big(B',\mathfrak{g}',\widehat{\mathcal{I}}'\big)$ defining quantum Hamiltonian reduction in the filtered setting, let $\widehat{B}$ and $\widehat{B}'$ be the $\mathbb{C} \llbracket \hslash \rrbracket$-algebras given by the Rees construction and choose further ideals $\widehat{\mathcal{I}}_{\hslash} \subseteq \widehat{U}(\mathfrak{g})$ and $\widehat{\mathcal{I}}'_{\hslash} \in \widehat{U}(\mathfrak{g'})$.

\begin{Lemma}\label{lemma:reduction_quantum_morphisms}Let $\widehat{\varphi} \colon B \to B'$ be a ring morphism such that $\widehat{\varphi}\big(B^{\mathfrak{g}}\big) \subseteq B'^{\mathfrak{g}'}$ and $\widehat{\varphi}\big(\widehat{\mathcal{J}}\big) \subseteq \widehat{\mathcal{J}}'$. Then:
\begin{enumerate}\itemsep=0pt
 \item[$1.$] There exists a unique ring morphism $R\widehat{\varphi} \colon R\big(B,\mathfrak{g},\widehat{\mathcal{I}}\big) \to R\big(B',\mathfrak{g}',\widehat{\mathcal{I}}'\big)$ induced by the universal property of quotients.

 \item[$2.$] If $\widehat{\varphi}$ is a filtered map then the morphism $\widehat{\varphi}_{\hslash} \colon \widehat{B} \to \widehat{B}'$ extended from $\widehat{\varphi}$ by $\mathbb{C}\llbracket \hslash \rrbracket$-linearity satisfies $\widehat{\varphi}_{\hslash}\big(\widehat{B}^{\mathfrak{g}}\big) \subseteq \widehat{B}'^{\mathfrak{g}'}$. If further $\widehat{\varphi}_{\hslash} \big(\widehat{\mathcal{J}}_{\hslash}\big) \subseteq \widehat{\mathcal{J}}'_{\hslash}$ then there exists a unique ring morphism $R\widehat{\varphi}_{\hslash} \colon R_{\hslash}\big(\widehat{B},\mathfrak{g},\widehat{\mathcal{I}}_{\hslash}\big) \to R_{\hslash}\big(\widehat{B}',\mathfrak{g}',\widehat{\mathcal{I}}'_{\hslash}\big)$ induced by the universal property of quotients.
\end{enumerate}
\end{Lemma}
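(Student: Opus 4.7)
The plan is to view both statements as straightforward applications of the universal property of quotients, combined in Part 2 with the construction of Section~\ref{sec:rees_construction}. I will first handle the filtered case and then bootstrap it to the deformation setting.

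For Part 1, I would start with the restriction $\left.\widehat{\varphi}\right\vert_{B^{\mathfrak{g}}} \colon B^{\mathfrak{g}} \to B'^{\mathfrak{g}'}$ guaranteed by hypothesis, compose with the canonical projection $\pi' \colon B'^{\mathfrak{g}'} \to R\big(B',\mathfrak{g}',\widehat{\mathcal{I}}'\big)$, and check that the resulting map $\pi' \circ \widehat{\varphi}$ vanishes on the two-sided ideal $\widehat{\mathcal{J}}^{\mathfrak{g}} = \widehat{\mathcal{J}} \cap B^{\mathfrak{g}}$. This is immediate: the hypothesis $\widehat{\varphi}\big(\widehat{\mathcal{J}}\big) \subseteq \widehat{\mathcal{J}}'$ together with $\widehat{\varphi}\big(B^{\mathfrak{g}}\big) \subseteq B'^{\mathfrak{g}'}$ gives $\widehat{\varphi}\big(\widehat{\mathcal{J}}^{\mathfrak{g}}\big) \subseteq \widehat{\mathcal{J}}' \cap B'^{\mathfrak{g}'} = \widehat{\mathcal{J}}'^{\mathfrak{g}'}$, which is precisely the kernel of $\pi'$. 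By the universal property of the quotient $B^{\mathfrak{g}} \big\slash \widehat{\mathcal{J}}^{\mathfrak{g}}$ there exists a unique morphism of associative algebras $R\widehat{\varphi}$ closing the diagram, and uniqueness is forced by the surjectivity of the projection $\pi \colon B^{\mathfrak{g}} \to R\big(B,\mathfrak{g},\widehat{\mathcal{I}}\big)$.

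For Part 2, the first step is to show that the $\mathbb{C}\llbracket \hslash \rrbracket$-linear extension $\widehat{\varphi}_{\hslash}(\sum_k f_k \cdot \hslash^k) \coloneqq \sum_k \widehat{\varphi}(f_k) \cdot \hslash^k$ is a well-defined morphism $\widehat{B} \to \widehat{B}'$, which is where the filtered hypothesis on $\widehat{\varphi}$ is used: since $\widehat{\varphi}$ sends $\mathcal{B}_{\leq k}$ into $\mathcal{B}'_{\leq k}$ the sums on the right-hand side belong to the topologically completed Rees algebra~\eqref{eq:completion_rees}, so that the formula makes sense termwise. Then the inclusion $\widehat{\varphi}_{\hslash}\big(\widehat{B}^{\mathfrak{g}}\big) \subseteq \widehat{B}'^{\mathfrak{g}'}$ follows from $\widehat{\varphi}\big(B^{\mathfrak{g}}\big) \subseteq B'^{\mathfrak{g}'}$ together with the fact that the $\mathfrak{g}$-actions on $\widehat{B}$, $\widehat{B}'$ are themselves defined by $\mathbb{C}\llbracket \hslash \rrbracket$-linear extension of the actions on $B$, $B'$ (so that a power series is invariant if and only if each of its coefficients is).

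The remaining statement is then a verbatim repetition of Part~1 in the deformation setting: assuming $\widehat{\varphi}_{\hslash}\big(\widehat{\mathcal{J}}_{\hslash}\big) \subseteq \widehat{\mathcal{J}}'_{\hslash}$, one has $\widehat{\varphi}_{\hslash}\big(\widehat{\mathcal{J}}^{\mathfrak{g}}_{\hslash}\big) \subseteq \widehat{\mathcal{J}}'_{\hslash} \cap \widehat{B}'^{\mathfrak{g}'} = \widehat{\mathcal{J}}'^{\mathfrak{g}'}_{\hslash}$, and the universal property of quotients yields the unique induced morphism $R\widehat{\varphi}_{\hslash}$ on $R_{\hslash}\big(\widehat{B},\mathfrak{g},\widehat{\mathcal{I}}_{\hslash}\big) \to R_{\hslash}\big(\widehat{B}',\mathfrak{g}',\widehat{\mathcal{I}}'_{\hslash}\big)$. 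There is no real obstacle here; the only delicate point worth writing out carefully is the well-definedness of $\widehat{\varphi}_{\hslash}$ on the completed Rees algebra~\eqref{eq:completion_rees}, as this is the step that genuinely uses the filtered hypothesis and is not an automatic consequence of $\mathbb{C}\llbracket\hslash\rrbracket$-linearity.
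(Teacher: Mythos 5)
Your proof is correct and takes essentially the same approach as the paper, which dispatches Part 1 and the second half of Part 2 as immediate from the hypotheses and justifies the invariance inclusion in Part 2 by noting that both the $\mathfrak{g}$-actions and $\widehat{\varphi}_{\hslash}$ are defined by $\mathbb{C}\llbracket\hslash\rrbracket$-linear extension, so invariance is checked coefficientwise. You add a useful remark the paper leaves implicit, namely that the filtered hypothesis (together with the order condition $\lvert\widehat{\varphi}(f_k)\rvert \leq \lvert f_k\rvert$) is what makes $\widehat{\varphi}_{\hslash}$ land in the completed Rees algebra~\eqref{eq:completion_rees} in the first place.
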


\begin{proof}The first item follows directly from the hypotheses.

For the former statement of the second item, note the Lie algebra actions on $\widehat{B}$ and $\widehat{B}'$ are obtained by imposing $\mathbb{C} \llbracket \hslash \rrbracket$-linearity, and the morphism $\widehat{\varphi}_{\hslash}$ preserves power series with invariant coefficients; but these are precisely the invariant elements inside the deformation quantisation. The latter statement of the second item follows directly from the hypotheses.
\end{proof}

This is as far as one can go with general constructions, since there is no global correspondence between ideals of $B$ and $\widehat{B}$. A partial correspondence is given by replacing elements $b \in B$ with their homogeneous version $b \cdot \hslash^{\lvert b \rvert} \in \widehat{B}$, which yield ideals in~$\widehat{B}$ whose semiclassical limit is homogeneous. In particular the quantisation of nonhomogeneous ideals in $B_0$ in power series in~$\hslash$ will not correspond to any of those (and such ideals do not admit filtered quantisations, cf.\ Remark~\ref{remark:orbit_quantisation}).

This is another motivation for introducing deformation quantisation instead of just working with filtered quantisation~-- apart from formalising the semiclassical limit.

\subsection{Quantum Hamiltonian reduction of symplectic quiver varieties}\label{sec:quantum_reduction_quiver_varieties}

The material of the previous Sections~\ref{sec:quantum_hamiltonian_reduction_filtered} and~\ref{sec:quantum_hamiltonian_reduction_formal} will now be applied to the case of the representation variety of a simply-laced graph/quiver. We will keep the notation that was used in the particular case of a complete $k$-partite quiver, since all this material will specialise to it.

Let then $\mathcal{G} = (\mathcal{G}_0 = I,\mathcal{G}_1)$ be a (finite) simply-laced graph, that is a graph having at most one edge between any two distinct nodes and no loop edges. Consider it equivalently as the quiver obtained by replacing every edge by a pair of opposite arrows, also denoted~$\mathcal{G}$.

The we attach finite-dimensional vector spaces $V_i$ to the nodes $i \in I$ and let $\mathbb{M} = \Rep(\mathcal{G},V)$ be the representation space in the $I$-graded vector space $V = \bigoplus_{i \in I} V_i$. To make it into a~symplectic vector space choose a skew-symmetric function $\alpha \mapsto \varepsilon_{\alpha} \colon \mathcal{G}_1 \to \mathbb{C}^*$, that is a function satisfying $\varepsilon_{\alpha} + \varepsilon_{\alpha^*} = 0$ for all $\alpha \in \mathcal{G}_1$ where $\alpha^*$ is the (unique) opposite arrow to $\alpha$, and define
\begin{equation*}
 \omega_{\varepsilon} \coloneqq \sum_{\alpha \in \mathcal{G}_1} \frac{\varepsilon_{\alpha}}{2} \Tr\big({\rm d}B_{\alpha} \wedge {\rm d}B_{\alpha^*}\big) \in \Omega^2(\mathbb{M},\mathbb{C}),
\end{equation*}
where $B_{\alpha} \in \Hom(V_{s(\alpha)},V_{t(\alpha)})$ is the linear map defined by a representation and $s,t \colon \mathcal{G}_1 \to I$ are the source and target maps.

\begin{Remark}\label{remark:lagrangian_splitting}Choosing an orientation for the arrows of $\mathcal{G}$, i.e., defining a partition $\mathcal{G}_1 = \mathcal{G}_1^+ \coprod \mathcal{G}_1^-$ with the two parts swapped by the involution $\alpha \mapsto \alpha^*$, yields a symplectic identification
\begin{equation*}
 \mathbb{M} \cong T^*\mathbb{L}, \qquad \text{where} \quad \mathbb{L} \coloneqq \bigoplus_{\alpha \in \mathcal{G}^+_1} \Hom(V_{s(\alpha)},V_{t(\alpha)}).
\end{equation*}
The subspace $\mathbb{L}$ is Lagrangian for the canonical symplectic structure $\omega = \sum_{\alpha \in \mathcal{G}^+_1} \Tr\big({\rm d}B^{\alpha} \wedge {\rm d}B^{\alpha^*}\big)$ of the cotangent bundle. This form corresponds to defining the skew-symmetric function $\varepsilon$ by imposing $\varepsilon = 1$ on $\mathcal{G}_1^+$.

A suitable change of Darboux coordinates on $\mathbb{M}$ turns the starting situation into this one. However this is non canonical (and breaks the $\SL_2(\mathbb{C})$-symmetries in the case of a complete $k$-partite quiver).
\end{Remark}

The group $\widehat{H} = \prod_{i \in I} \GL(V_i)$ acts on $(\mathbb{M},\omega_{\varepsilon})$ with a moment map $\mu_{\varepsilon} \colon \mathbb{M} \to \mathfrak{h}^* \cong \mathfrak{h}$, where $\mathfrak{h} = \Lie\big(\widehat{H}\big)$ is the Lie algebra and the duality is provided by the nondegenerate trace pairings $\mathfrak{gl}(V_i) \otimes \mathfrak{gl}(V_i) \to \mathbb{C}$. The moment map admits the following formula:
\begin{equation}\label{eq:classical_moment}
 \mu_{\varepsilon} \colon B \longmapsto \bigoplus_{i \in I} \left( \sum_{\alpha \in t^{-1}(i)}\varepsilon_{\alpha} B_{\alpha}B_{\alpha^*}\right), \qquad \text{where} \quad B = (B_{\alpha})_{\alpha \in \mathcal{G}_1}.
\end{equation}

Now we split~\eqref{eq:classical_moment} into components after introducing some notation. For $i \in I$ denote $\mu_{\varepsilon,i}$ the component of $\mu$ taking values in $\mathfrak{h}_i \coloneqq \mathfrak{gl}(V_i)$ and write $U^i \coloneqq \bigoplus_{\alpha \in t^{-1}(i)} V_{s(\alpha)}$ the direct sum of spaces on nodes adjacent to $i$. Then $\mu_i$ only depends on the linear maps inside the $\GL(V_i)$-Hamiltonian subspace
\begin{equation*}
 \mathbb{M}_i \coloneqq \bigoplus_{\alpha \in t^{-1}(i)} \Hom\big(V_{s(\alpha)},V_{t(\alpha)}\big) \oplus \Hom\big(V_{t(\alpha)},V_{s(\alpha)}\big) = \Hom\big(U^i,V_i\big) \oplus \Hom\big(V_i,U^i\big),
\end{equation*}
which is the space of representation of the full subquiver $\mathcal{G}_i \subseteq \mathcal{G}$ on nodes
\begin{equation*}
 (\mathcal{G}_i)_0 \coloneqq \{i\} \cup \big\{s(\alpha) \,|\, \alpha \in t^{-1}(i) \big\} \subseteq I.
\end{equation*}
This is the (full) star-shaped subquiver centred at $i$, e.g., Fig.~\ref{fig:star} where an edge stands for a pair of opposite arrows.
\begin{figure}[h]\centering
\begin{tikzpicture}
 \vertex [label=above right:{$i$}] (a) at (0,0) {};
 \vertex (b) [above of = a] {};
 \vertex (c) [right of = a] {};
 \vertex [label=left:{$\mathcal{G}_i =$}] (d) [left of = a] {};
 \vertex (e) [below of = a] {};
\path
 (a) edge [-] (b)
 (a) edge [-] (c)
 (a) edge [-] (d)
 (a) edge [-] (e);
\end{tikzpicture}
\caption{A simply-laced star-shaped graph centred at $i$ (here with 4 peripheral nodes). \label{fig:star}}
\end{figure}
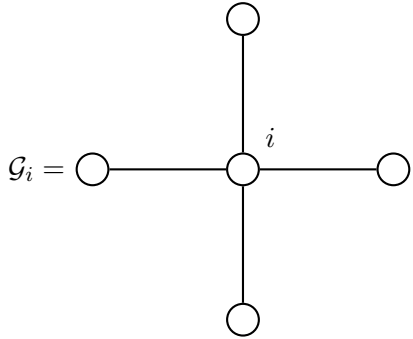

Now consider the pull-back $\mu^*_{\varepsilon,i} \colon \Sym\big(\mathfrak{h}_i^*\big) \to \Sym(\mathbb{M}_i^*) \subseteq A_0$ along $\mu_{\varepsilon,i}$, where $A_0 = \mathbb{C}[\mathbb{M}]$. This map is the restriction of the global comoment $\mu^* \colon \Sym(\mathfrak{h}^*) \to A_0$ along the inclusion $\mathfrak{h}_i^* \hookrightarrow \mathfrak{h}^*$ induced by the canonical projection $\mathfrak{h} \to \mathfrak{h}_i$, but it also the classical comoment for the Hamiltonian $\GL(V_i)$ action on $\mathbb{M}_i$ equipped with the restriction of $\omega_{\varepsilon}$. To give a formula for it choose bases for the spaces $V_i$~-- or equivalently take a dimension vector $d = (d_i)_{i \in I} \in \mathbb{Z}_{\geq 0}^I$ and let $V_i \coloneqq \mathbb{C}^{d_i}$~-- and denote $\big(\Lambda^i_{kl}\big)_{kl}$ the set of linear coordinates on $\mathfrak{h}_i$ in the basis. Similarly define the linear coordinates $\big(B_{\alpha}^{kl}\big)_{kl}$ on the subspace $\Hom\big(V_{s(\alpha)},V_{t(\alpha)}\big)$ for $\alpha \in \mathcal{G}_1$.

Then~\eqref{eq:classical_moment} yields the following formula:
\begin{equation}\label{eq:classical_local_comoment}
 \mu^*_{\varepsilon,i}\big(\Lambda^i_{kl}\big) = \sum_{\alpha \in t^{-1}(i)} \varepsilon_{\alpha} \sum_m B_{\alpha}^{km}B_{\alpha^*}^{ml},
\end{equation}
and we want to quantise this comoment map. To this end denote $(A,\ast_{\varepsilon})$ the Weyl algebra of the vector space $\mathbb{M}^*$ equipped with the alternating bilinear form defined by the Poisson bracket $\{\cdot,\cdot\}_{\varepsilon} = \omega_{\varepsilon}^{-1}$, and abusively keep the same notation for the restriction of the bracket to $\mathbb{M}^*_i$. Then there is a canonical embedding $W(\mathbb{M}^*_i,\{\cdot,\cdot\}_{\varepsilon}) \hookrightarrow A$ induced on the quotients from the ring morphism $\Tens(\iota)$, where $\iota \colon \mathbb{M}_i^* \hookrightarrow \mathbb{M}^*$ is the dual of the canonical projection $\mathbb{M} \to \mathbb{M}_i$.

\begin{Lemma}\label{lemma:local_quantum_comoment}
The following formula defines a morphism $\widehat{\mu}^*_{\varepsilon,i} \colon U(\mathfrak{h}_i^*) \to W(\mathbb{M}^*_i,\{\cdot,\cdot\}_{\varepsilon}) \subseteq A$ of associative algebras whose associated graded coincides with~\eqref{eq:classical_local_comoment}:
\begin{equation}\label{eq:quantum_comoment_local}
 \widehat{\mu}^*_{\varepsilon,i}\big(\widehat{\Lambda}^i_{kl}\big) \coloneqq \frac{1}{2}\sum_{\alpha \in t^{-1}(i)} \varepsilon_{\alpha} \sum_m \big(\widehat{B}_{\alpha}^{km} \ast_{\varepsilon} \widehat{B}_{\alpha^*}^{ml} + \widehat{B}_{\alpha^*}^{ml} \ast_{\varepsilon} \widehat{B}_{\alpha}^{km}\big),
\end{equation}
where $\widehat{\Lambda}^i_{kl}$ and $\widehat{B}_{\alpha}^{kl}$ are the Weyl quantisations of the corresponding coordinate functions.
\end{Lemma}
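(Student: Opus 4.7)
The plan is to verify two claims: (i) that formula \eqref{eq:quantum_comoment_local} extends by the universal property to a well-defined morphism of associative algebras $\widehat{\mu}^*_{\varepsilon,i} \colon U(\mathfrak{h}_i^*) \to W(\mathbb{M}_i^*,\{\cdot,\cdot\}_\varepsilon)$, and (ii) that its associated graded recovers the classical comoment $\mu^*_{\varepsilon,i}$ of \eqref{eq:classical_local_comoment}.

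By the universal property of the enveloping algebra, claim (i) reduces to showing that the assignment $\widehat{\Lambda}^i_{kl} \mapsto \widehat{\mu}^*_{\varepsilon,i}(\widehat{\Lambda}^i_{kl})$ defines a Lie algebra homomorphism from $\mathfrak{h}_i^* \cong \mathfrak{gl}(V_i)$, equipped with the matrix commutator inherited via the trace pairing, into the Weyl algebra equipped with its commutator. The key underlying fact is the standard exactness of the symmetric (Weyl) quantisation on polynomials of degree at most two: for any $p,q \in \Sym(\mathbb{M}_i^*)$ of degree $\leq 2$ one has $[\widehat{p},\widehat{q}] = \widehat{\{p,q\}_\varepsilon}$ without higher-order corrections, because in the Moyal expansion of the star product the terms beyond the first Poisson bracket involve at least three partial derivatives in either argument and therefore vanish on quadratic polynomials. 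Now the functions $\mu^*_{\varepsilon,i}(\Lambda^i_{kl})$ of \eqref{eq:classical_local_comoment} are quadratic in the coordinates $B^{kl}_\alpha$, and \eqref{eq:quantum_comoment_local} is literally their symmetric quantisation; since $\mu^*_{\varepsilon,i}$ is already a Poisson map, i.e.\ $\{\mu^*_{\varepsilon,i}(\Lambda^i_{kl}),\mu^*_{\varepsilon,i}(\Lambda^i_{mn})\}_\varepsilon = \mu^*_{\varepsilon,i}([\Lambda^i_{kl},\Lambda^i_{mn}])$, applying the symmetric quantisation to both sides of this identity yields the required Lie algebra morphism property.

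The underlying commutator expansion proceeds by the Leibniz rule: one uses that $[\widehat{B}^{km}_\alpha,\widehat{B}^{m'l'}_\beta] \in \mathbb{C}$ vanishes unless $\beta = \alpha^*$, in which case it equals $\{B^{km}_\alpha,B^{m'l'}_{\alpha^*}\}_\varepsilon$, determined by inverting $\omega_\varepsilon$. Thus only pairs of opposite arrows contribute to $[\widehat{\mu}^*_{\varepsilon,i}(\widehat{\Lambda}^i_{kl}),\widehat{\mu}^*_{\varepsilon,i}(\widehat{\Lambda}^i_{mn})]$, and the $\varepsilon_\alpha$ weights appearing in \eqref{eq:quantum_comoment_local} cancel against the $\varepsilon_\alpha^{-1}$ arising from the Poisson bracket, producing precisely the $\mathfrak{gl}(V_i)$ structure constants $\delta_{kn}\widehat{\Lambda}^i_{ml} - \delta_{lm}\widehat{\Lambda}^i_{kn}$ after symmetric requantisation. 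Claim (ii) is then immediate: the symmetric product $\tfrac12\bigl(\widehat{B}^{km}_\alpha \ast_\varepsilon \widehat{B}^{ml}_{\alpha^*} + \widehat{B}^{ml}_{\alpha^*} \ast_\varepsilon \widehat{B}^{km}_\alpha\bigr)$ lies in $\mathcal{B}_{\leq 2}$ with leading symbol the commutative product $B^{km}_\alpha B^{ml}_{\alpha^*}$, since the antisymmetric correction $\tfrac12[\widehat{B}^{km}_\alpha,\widehat{B}^{ml}_{\alpha^*}]$ is a scalar and therefore of strictly lower Bernstein order, so that $\sigma_2 \circ \widehat{\mu}^*_{\varepsilon,i}$ recovers $\mu^*_{\varepsilon,i}$ term by term.

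The main obstacle is the index-and-sign bookkeeping in the commutator expansion: one has to track carefully how the $\varepsilon_\alpha$ in the definition of $\widehat{\mu}^*_{\varepsilon,i}$ interact with those arising from $\omega_\varepsilon^{-1}$ in the Poisson bracket. This is however essentially the same calculation that shows $\mu_\varepsilon$ of \eqref{eq:classical_moment} to be a moment map for the $\widehat{H}$-action, lifted by one filtration degree via the exactness of the Weyl quantisation on quadratic polynomials, so no genuinely new input is needed.
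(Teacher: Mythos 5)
Your proof is correct, and it reorganises the argument in a cleaner, more modular way than the paper does. You invoke the exactness of symmetric (Weyl) quantisation on polynomials of degree at most two --- that $[\widehat{p},\widehat{q}]$ equals the symmetric quantisation of $\{p,q\}_\varepsilon$ with no lower-order corrections, since the higher Moyal terms require three or more derivatives in each argument --- which reduces the quantum Lie-algebra-morphism check to the classical Poisson-map property of $\mu^*_{\varepsilon,i}$, itself a standard fact about moment maps for quiver representations. The paper instead works at the tensor-algebra level: it defines a lift $\widetilde{\mu}^*_{\varepsilon,i}\colon \Tens(\mathfrak{h}_i^*) \to \Tens(\mathbb{M}_i^*)$, appeals to the universal property of the quotient, and sketches the coordinate verification directly from the commutation relations of $\mathfrak{h}_i$ together with the Poisson-commutation relations~\eqref{eq:commutation_relations_classical} among the linear quiver coordinates. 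Both routes ultimately hinge on the same underlying calculation --- expanding a commutator of symmetrised quadratics via the Leibniz rule, using that commutators of linear Weyl elements are central scalars --- and your packaging of that calculation as a general exactness lemma makes explicit why no normal-ordering constants appear, a point the paper leaves implicit in the phrase ``sends the Lie bracket to the Poisson bracket.'' The paper's direct computation buys self-containment at the cost of this transparency. Your verification of claim (ii), via the leading symbol of the symmetric product, matches the paper's closing computation of $\sigma_2$ term by term.
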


\begin{proof}By the universal property of the quotient, it will be enough to show that the morphism $\widetilde{\mu}^*_{\varepsilon,i} \colon \Tens\big(\mathfrak{h}_i^*\big) \to \Tens(\mathbb{M}_i^*) \subseteq \Tens(\mathbb{M}^*)$ defined by
\begin{equation*}
 \qquad \widetilde{\mu}^*_{\varepsilon,i} \colon \ \Lambda^i_{kl} \longmapsto \frac{1}{2}\sum_{\alpha \in t^{-1}(i)} \varepsilon_{\alpha} \sum_m \big(B_{\alpha}^{km} \otimes B_{\alpha^*}^{ml} + B_{\alpha^*}^{ml} \otimes B_{\alpha}^{km}\big)
\end{equation*}
sends the Lie bracket of two elements of $X,Y \in \mathfrak{h}^*_i$ to the Poisson bracket $\big\{\widetilde{\mu}^*_i(X),\widetilde{\mu}^*_i(Y)\big\}_{\varepsilon}$. Since the Lie bracket of the dual Lie algebra is by definition induced from that of $\mathfrak{h}_i$ by the trace duality $\Tr \colon \mathfrak{h}_i \to \mathfrak{h}_i^*$, it is easier to prove that $\alpha \coloneqq \widetilde{\mu}^*_i \circ \Tr$ sends $[X,Y] \in \mathfrak{h}_i$ to $\{\alpha(X),\alpha(Y)\}_{\varepsilon}$ for $X,Y \in \mathfrak{h}_i$. This can be checked in the chosen coordinates using the commutation relations
\begin{equation*}
 \big[e^i_{kl},e^i_{k'l'}\big] = \delta_{k'l}e^i_{kl'} - \delta_{kl'}e^i_{k'l}
\end{equation*}
inside $\mathfrak{h}_i$, where $e_{kl} \in \mathfrak{h}_i$ is the vector sent to $\Tr(e^i_{kl} \cdot) = \Lambda^i_{lk}$ by the trace duality, as well as the Poisson-commutation relations
\begin{equation}\label{eq:commutation_relations_classical}
 \big\{B_{\alpha}^{kl},B_{\beta}^{k'l'}\big\}_{\varepsilon} = \varepsilon_{\alpha}^{-1} \delta_{\alpha^* \beta} \delta_{kl'}\delta_{k'l},
\end{equation}
inside $A_0$.\footnote{These Poisson-commutation relations hold because $\{\cdot,\cdot\}_{\varepsilon} = \omega_{\varepsilon}^{-1}$, and since by definition
\begin{equation*}
 \omega_{\varepsilon}\left(\frac{\partial}{\partial (B_{\alpha})_{kl}},\frac{\partial}{\partial (B_{\beta})_{k'l'}}\right) = \varepsilon_{\alpha} \delta_{\alpha^* \beta} \delta_{kl'}\delta_{k'l},
\end{equation*}
where we take the vector fields associated to the coordinate functions $(B_{\alpha})_{kl}$.}

Then formula~\eqref{eq:quantum_comoment_local} holds. The statement about the associated graded follows from the identity
\begin{equation*}
 \sigma_2 \big(\widehat{B}_{\alpha}^{km} \ast_{\varepsilon} \widehat{B}_{\alpha^*}^{ml} + \widehat{B}_{\alpha^*}^{ml} \ast_{\varepsilon} \widehat{B}_{\alpha}^{km}\big) = 2 B_{\alpha}^{km}B_{\alpha^*}^{ml} \in \Sym(\mathbb{M}_i^*) \subseteq A_0.\tag*{\qed}
\end{equation*}\renewcommand{\qed}{}
\end{proof}

Now collect these morphisms together into a map $\widehat{\mu}_{\varepsilon}^* \colon U(\mathfrak{h}^*) \to A$ by imposing
\begin{equation}\label{eq:quantum_comoment_global}
 \widehat{\mu}_{\varepsilon}^* \colon \ \bigotimes_{i \in I} \widehat{\Lambda}^i \longmapsto \prod_{i \in I} \widehat{\mu}^*_{\varepsilon,i} \big(\widehat{\Lambda}^i\big), \qquad \text{for} \quad \widehat{\Lambda}^i \in \mathfrak{h}^*_i,
\end{equation}
in the identification $U(\mathfrak{h}^*) \cong \bigotimes_{i \in I} U(\mathfrak{h}_i^*)$, and using the product of $A$ on the right-hand side.

\begin{Lemma}\label{lemma:global_quantum_comoment}The map $\widehat{\mu}_{\varepsilon}^*$ is a morphism of associative algebras.
\end{Lemma}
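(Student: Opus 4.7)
The plan is to reduce the claim to a Lie-algebra statement. Since $\mathfrak{h}^* = \bigoplus_{i \in I} \mathfrak{h}^*_i$ is a direct sum of Lie algebras, there is an associative-algebra isomorphism $U(\mathfrak{h}^*) \cong \bigotimes_{i \in I} U(\mathfrak{h}^*_i)$ under which the simple tensor $\bigotimes_{i} \widehat{\Lambda}^i$ corresponds to the product $\prod_i \widehat{\Lambda}^i$ taken in any order. Hence the assignment~\eqref{eq:quantum_comoment_global} is well defined and multiplicative on simple tensors as soon as the images of $\widehat{\mu}^*_{\varepsilon,i}$ and $\widehat{\mu}^*_{\varepsilon,j}$ commute in $A$ for $i \neq j$. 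Combined with Lemma~\ref{lemma:local_quantum_comoment}, which handles the diagonal case $i=j$, the whole claim reduces to establishing
\begin{equation*}
 \bigl[\widehat{\mu}^*_{\varepsilon,i}(X^i),\, \widehat{\mu}^*_{\varepsilon,j}(Y^j)\bigr] = 0 \quad \text{in } A, \qquad X^i \in \mathfrak{h}^*_i,\; Y^j \in \mathfrak{h}^*_j,\; i \neq j.
\end{equation*}

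To prove this commutativity I would decompose each local comoment according to the support of its arrows. Write $\widehat{\mu}^*_{\varepsilon,i}(X^i) = M^{\text{sh}}_i + M^{\text{un}}_i$, where $M^{\text{sh}}_i$ collects the contribution to~\eqref{eq:quantum_comoment_local} coming from the unique arrow $\alpha \in t^{-1}(i)$ with $s(\alpha) = j$ (if present), and $M^{\text{un}}_i$ the remainder; define $M^{\text{sh}}_j$, $M^{\text{un}}_j$ analogously. Then $M^{\text{un}}_i$ is supported on Weyl generators $\widehat{B}_\gamma^{pq}$ for arrows $\gamma$ incident to $i$ but not to $j$, and by the Poisson-commutation relations~\eqref{eq:commutation_relations_classical} these commute with every generator involved in $M^{\text{sh}}_j$ or $M^{\text{un}}_j$. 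Three of the four cross brackets in the expansion of the full commutator therefore vanish outright. If $i$ and $j$ are non-adjacent then $M^{\text{sh}}_i = M^{\text{sh}}_j = 0$ and the remaining bracket is trivially zero.

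The main obstacle is the adjacent case, where one must show $\bigl[M^{\text{sh}}_i, M^{\text{sh}}_j\bigr] = 0$. I expect to proceed by a direct computation inside the Weyl subalgebra on the single symplectic pair $(\widehat{B}_\alpha^{pq}, \widehat{B}_{\alpha^*}^{qp})$, exploiting the skew-symmetry $\varepsilon_{\alpha^*} = -\varepsilon_\alpha$ and the symmetric Weyl ordering $\tfrac{1}{2}\bigl(\widehat{B} \ast_\varepsilon \widehat{B}^* + \widehat{B}^* \ast_\varepsilon \widehat{B}\bigr)$ built into~\eqref{eq:quantum_comoment_local}. Conceptually, a Jacobi argument already identifies this commutator as an element of the centre $\mathbb{C}\subseteq A$ of the Weyl algebra: the subgroups $\GL(V_i)$ and $\GL(V_j)$ commute inside $\widehat{H}$ so, by Lemma~\ref{lemma:local_quantum_comoment}, the inner derivations $[\widehat{\mu}^*_{\varepsilon,i}(X^i),\cdot]$ and $[\widehat{\mu}^*_{\varepsilon,j}(Y^j),\cdot]$ commute on $A$, forcing their bracket to act trivially on all generators. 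The symmetric ordering then precisely cancels the surviving scalar $\hslash$-correction that would otherwise appear as a central anomaly; equivalently, two applications of~\eqref{eq:commutation_relations_classical} to the normal-ordered and reverse-ordered halves of each $M^{\text{sh}}$ produce opposite contributions that add to zero.

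Once this commutativity is in hand, the assignment $\widehat{\mu}^*_\varepsilon := \bigoplus_i \widehat{\mu}^*_{\varepsilon,i}$ on $\mathfrak{h}^* = \bigoplus_i \mathfrak{h}^*_i$ is a Lie-algebra morphism into $A$, and its unique extension to $U(\mathfrak{h}^*)$ via the universal property agrees with~\eqref{eq:quantum_comoment_global} on simple tensors, as required.
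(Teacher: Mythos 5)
Your proof mirrors the paper's: reduce to showing that the images of $\widehat{\mu}^*_{\varepsilon,i}$ and $\widehat{\mu}^*_{\varepsilon,j}$ commute inside $A$ for $i\neq j$, dispose of the non-adjacent case by the disjoint support of the generators, and for adjacent nodes restrict to the Weyl subalgebra on the single symplectic pair $(\widehat{B}_\alpha,\widehat{B}_{\alpha^*})$ and compute the commutator directly, using $\varepsilon_\alpha+\varepsilon_{\alpha^*}=0$ together with the quantum commutation relations and the symmetric ordering in~\eqref{eq:quantum_comoment_local}. One caveat in the side remark: the claim that ``by Lemma~\ref{lemma:local_quantum_comoment} the inner derivations $[\widehat{\mu}^*_{\varepsilon,i}(X^i),\cdot]$ and $[\widehat{\mu}^*_{\varepsilon,j}(Y^j),\cdot]$ commute on $A$'' is a forward reference --- Lemma~\ref{lemma:local_quantum_comoment} only asserts each local map is an algebra morphism, while the identification of the inner derivation $[\widehat{\mu}^*_{\varepsilon,i}(\cdot),\cdot]$ with the infinitesimal $\GL(V_i)$-action (which is what makes those derivations commute) is established in the paper only in the unnamed lemma following the present one; so as a logical shortcut it is not available here without re-deriving that fact on generators, and in any case the relevant relations in the final computation are the quantum ones~\eqref{eq:commutation_relations_quantum} rather than the Poisson relations~\eqref{eq:commutation_relations_classical}.
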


\begin{proof}The point is showing that the images of $\widehat{\mu}^*_{\varepsilon,i}$ and $\widehat{\mu}^*_{\varepsilon,j}$ commute inside $A$ for $i \neq j$ in $I$, as this implies that{\samepage
\begin{equation*}
 \widehat{\mu}_{\varepsilon}^* \left(\bigotimes_{i \in I} \widehat{\Lambda}^i_1 \cdot \widehat{\Lambda}^i_2\right) = \prod_{i \in I} \widehat{\mu}_{\varepsilon,i}^*\big(\widehat{\Lambda}^i_1\big) \ast_{\varepsilon} \prod_{i \in I} \widehat{\mu}_{\varepsilon,i}^*\big(\widehat{\Lambda}^i_2\big), \qquad \text{for} \quad \widehat{\Lambda}^i_1, \widehat{\Lambda}^i_2 \in \mathfrak{h}^*_i,
\end{equation*}
where $\widehat{\Lambda}^i_1 \cdot \widehat{\Lambda}^i_2$ is the product in $U(\mathfrak{h}^*_i)$.}

Importantly, the commutator of the product $\ast_{\varepsilon}$ is determined by the adjacency of $\mathcal{G}$ by means of the relations
\begin{equation}\label{eq:commutation_relations_quantum}
 \big[\widehat{B}_{\alpha}^{kl},\widehat{B}_{\beta}^{k'l'}\big] = \varepsilon_{\alpha}^{-1} \delta_{\alpha^* \beta}\delta_{kl'}\delta_{k'l},
\end{equation}
which in turn follow from~\eqref{eq:commutation_relations_classical}. In particular only the quantum variables attached to opposite arrows do not commute.

The commutativity is clear if there is no arrow between $i$ and $j$: in that case one has the stronger statement that $\big[W(\mathbb{M}^*_i,\{\cdot,\cdot\}_{\varepsilon}),W(\mathbb{M}^*_j,\{\cdot,\cdot\}_{\varepsilon})\big] = (0)$ in the Weyl algebra, since there are no arrows in the full star-shaped subquiver $\mathcal{G}_i$ centred at $i$ with its opposite in the full star-shaped subquiver $\mathcal{G}_j$ centred at $j$.

Suppose instead to have a (unique) pair of antiparallel arrows between the nodes $i$ and $j$, and consider the subquiver $\mathcal{G}_{ij} \subseteq \mathcal{G}$ obtained by glueing together the full star-shaped quivers centred at $i$ and $j$ at their common edge/double arrow, e.g., Fig.~\ref{fig:stars_glued}.
\begin{figure}[h]\centering
\begin{tikzpicture}
 \vertex [label={$i$}] (a) at (0,0) {};
 \vertex (b) [above left of = a] {};
 \vertex [label=left:{$\mathcal{G}_{ij} =$}] (c) [left of = a] {};
 \vertex (d) [below left of = a] {};
 \vertex [label={$j$}] (e) [right of = a] {};
 \vertex (f) [above right of = e] {};
 \vertex (g) [right of = e] {};
 \vertex (h) [below right of = e] {};
\path
 (a) edge [-] (b)
 (a) edge [-] (c)
 (a) edge [-] (d)
 (a) edge [-] (e)
 (e) edge [-] (f)
 (e) edge [-] (g)
 (e) edge [-] (h);
\end{tikzpicture}
\caption{The glueing of the star-shaped subgraphs centred at $i$ and $j$ (here with 4 peripheral nodes each). \label{fig:stars_glued}}
\end{figure}
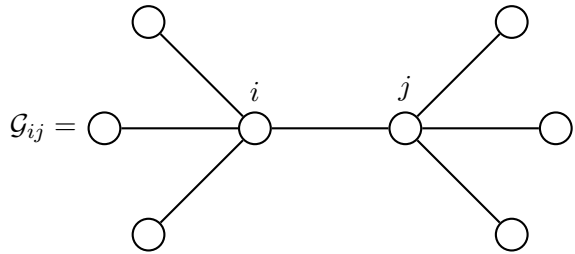

Then the morphisms $\widehat{\mu}^*_i$ and $\widehat{\mu}^*_j$ take values in the Weyl algebra for the representation space of $\mathcal{G}_{ij}$. Reasoning as above, the only commutators to consider are those among the components of $\widehat{\mu}^*_i$ and $\widehat{\mu}^*_j$ taking values inside the Weyl subalgebra
\begin{equation*}
 W(\mathbb{M}^*_{ij},\{\cdot,\cdot\}_{\varepsilon}) \subseteq W(\mathbb{M}^*_i,\{\cdot,\cdot\}_{\varepsilon}) \cap W(\mathbb{M}^*_j,\{\cdot,\cdot\}_{\varepsilon}),
\end{equation*}
where $\mathbb{M}_{ij} \coloneqq \Hom(V_i,V_j) \oplus \Hom(V_j,V_i)$. Keeping the notation $\widehat{\mu}^*_i$, $\widehat{\mu}^*_j$ for these components and writing $\alpha$ the arrow from $i$ to $j$ yields
\begin{equation*}
 \widehat{\mu}^*_i\big(\widehat{\Lambda}^i_{kl}\big) = \frac{\varepsilon_{\alpha^*}}{2} \sum_m \widehat{B}_{\alpha^*}^{km} \ast_{\varepsilon} \widehat{B}_{\alpha}^{ml} + \widehat{B}_{\alpha}^{ml} \ast_{\varepsilon} \widehat{B}_{\alpha^*}^{km},
\end{equation*}
and
\begin{equation*}
 \widehat{\mu}^*_j\big(\widehat{\Lambda}^j_{kl}\big) = \frac{\varepsilon_{\alpha}}{2} \sum_m \widehat{B}_{\alpha}^{km} \ast_{\varepsilon} \widehat{B}_{\alpha^*}^{ml} + \widehat{B}_{\alpha^*}^{ml} \ast_{\varepsilon} \widehat{B}_{\alpha}^{km},
\end{equation*}
looking at~\eqref{eq:quantum_comoment_local}. Then a direct computation shows that $\big[\widehat{\mu}^*_i\big(\widehat{\Lambda}^i_{kl}\big),\widehat{\mu}^*_j\big(\widehat{\Lambda}^j_{k'l'}\big)\big] = 0$ for all $k$,~$l$,~$k'$,~$l'$, using both $\varepsilon_{\alpha} + \varepsilon_{\alpha^*} = 0$ and the commutation relations~\eqref{eq:commutation_relations_quantum}.
\end{proof}

\begin{Remark}Lemmata~\ref{lemma:local_quantum_comoment} and~\ref{lemma:global_quantum_comoment} should be compared with~\cite[Propositions~9.4 and~9.6]{rembado_2019_simply_laced_quantum_connections_generalising_kz}. The case treated there was that of a star-shaped quiver with a function $\varepsilon$ such that $\omega_{\varepsilon}$ is already in canonical form. In that context there is no symmetry breaking in taking Darboux coordinates (since a Lagrangian splitting is already given), and this allows to simply formula~\eqref{eq:quantum_comoment_local} by replacing the symmetrisation with a normal ordered product.

Thus the present situation generalises that of~\cite[Section~9]{rembado_2019_simply_laced_quantum_connections_generalising_kz} to a generic simply-laced graph quiver, independently of the choice of Lagrangian splitting~-- hence compatibly with the $\SL_2(\mathbb{C})$-symmetries in the complete $k$-partite case.
\end{Remark}

Now notice that since the Hamiltonian $\widehat{H}$-action on $(\mathbb{M},\omega_{\varepsilon})$ is given by linear symplectomorphisms, one can use Lemma~\ref{lemma:linear_functoriality} to quantise it. The pull-backs along the $\widehat{H}$-action give Poisson automorphisms of the graded commutative Poisson algebra $(A_0,\{\cdot,\cdot\}_{\varepsilon})$, and one quantises them to automorphisms of the associative algebra $(A,\ast_{\varepsilon})$. It follows from the construction of the quantisation in Lemma~\ref{lemma:linear_functoriality} that this action is still given by the simultaneous conjugation
\begin{equation}\label{eq:quantum_conjugation_action}
 (g_i)_{i \in I}.\big(\widehat{B}_{\alpha}\big)_{\alpha \in \mathcal{G}_1} \coloneqq \big(g_{t(\alpha)}^{-1} \widehat{B}_{\alpha} g_{s(\alpha)}\big)_{\alpha \in \mathcal{G}_1},
\end{equation}
where $g_i \in \GL(V_i)$ for $i \in I$ and $\widehat{B}^{\alpha} \in \Hom\big(V_{s(\alpha)},V_{t(\alpha)}\big) \otimes A$ is the matrix containing the Weyl quantisation of the linear functions $B_{\alpha}^{kl} \in \mathbb{M}^*$.

\begin{Remark}\label{remark:quantum_trace_invariance}
In this viewpoint it is clear that the traces of quantum potentials of Section~\ref{sec:quantum_trace} define $\widehat{H}$-invariant functions. Indeed if $V_1$, $V_2$ are finite dimensional vector spaces and one takes elements $B \in \Hom(V_1,V_2) \otimes A$, $C \in \Hom(V_2,V_1) \otimes A$ then
\begin{equation*}
 \Tr(BC) - \Tr(CB) = \sum_{i,j} [B_{ij},C_ {ji}] \in A,
\end{equation*}
where $B_{ij}, C_{ji} \in A$ are the coefficients in given bases of $V_1$ and $V_2$. Thus in particular one can cyclically permute the factors inside the trace if all coefficients of $B$ commute with those of $C$. This is the case for the ``quantum'' base-changing action~\eqref{eq:quantum_conjugation_action}, since $\widehat{H}$ conjugates with respect to matrices with complex coefficients which lie in the centre of the Weyl algebra.
\end{Remark}

Now taking the tangent map of the action $\widehat{H} \to \Aut(A)$~\eqref{eq:quantum_conjugation_action} at the identity provides an $\mathfrak{h}$-action, and its composition with the trace duality yields a morphism $\rho \colon \mathfrak{h}^* \to \Der(A)$.

\begin{Lemma}The morphism~\eqref{eq:quantum_comoment_global} is a quantum comoment map for the quantum base-changing action~\eqref{eq:quantum_conjugation_action} on $A$.
\end{Lemma}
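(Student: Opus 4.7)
The plan is to verify the defining identity of Definition~\ref{def:quantum_comoment}, namely $\rho(x).b = [\widehat{\mu}^*_\varepsilon(x), b]$, on a set of generators. Since both sides are derivations in $b$, and since $A$ is generated by the Weyl-quantised coordinates $\widehat{B}_\alpha^{kl}$, it will suffice to take $b = \widehat{B}_\beta^{k'l'}$; and since both sides are linear in $x$, it will suffice to let $x$ run over the basis $\{\widehat{\Lambda}^i_{kl}\}$ of the vector space $\mathfrak{h}^* = \bigoplus_{i \in I} \mathfrak{h}^*_i$, on which the global formula~\eqref{eq:quantum_comoment_global} reduces to the local one~\eqref{eq:quantum_comoment_local}.

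First I would compute the left-hand side explicitly. Expanding $[\widehat{\mu}^*_{\varepsilon,i}(\widehat{\Lambda}^i_{kl}), \widehat{B}_\beta^{k'l'}]$ with the Leibniz rule and using that the elementary commutators~\eqref{eq:commutation_relations_quantum} are central scalars, the symmetrisation in~\eqref{eq:quantum_comoment_local} doubles each of the two surviving terms; after collapsing the Kronecker deltas, performing the inner sum over $m$, invoking $\varepsilon_{\alpha^*}+\varepsilon_\alpha = 0$ and the outer sum over $\alpha \in t^{-1}(i)$, one should arrive at
\[
 \big[\widehat{\mu}^*_\varepsilon\big(\widehat{\Lambda}^i_{kl}\big), \widehat{B}_\beta^{k'l'}\big] = -\delta_{i, t(\beta)}\, \delta_{k' l}\, \widehat{B}_\beta^{k l'} + \delta_{i, s(\beta)}\, \delta_{k l'}\, \widehat{B}_\beta^{k' l},
\]
with at most one of the two terms being nonzero since $\mathcal{G}$ has no loop edges.

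Next I would compute the right-hand side geometrically. Differentiating the conjugation action~\eqref{eq:quantum_conjugation_action} at the identity yields the infinitesimal action $\widehat{\rho}_{\mathfrak{h}}(X).\widehat{B}_\beta = -X_{t(\beta)} \widehat{B}_\beta + \widehat{B}_\beta X_{s(\beta)}$ on matrices, for $X = (X_i)_{i \in I} \in \mathfrak{h}$; the trace duality $\mathfrak{h} \to \mathfrak{h}^*$, $Y \mapsto \Tr(Y \cdot)$, sends the matrix unit $e^i_{lk} \in \mathfrak{h}_i$ to the coordinate $\widehat{\Lambda}^i_{kl}$, so that by definition $\rho(\widehat{\Lambda}^i_{kl}) = \widehat{\rho}_{\mathfrak{h}}(e^i_{lk})$. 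A short matrix calculation then reproduces exactly the same expression as above, completing the verification on generators; by derivation property and linearity the identity extends to all of $A$ and all of $\mathfrak{h}^* \subseteq U(\mathfrak{h}^*)$.

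The main obstacle, while elementary, will be keeping the sign and ordering conventions straight: the inverse $g_{t(\alpha)}^{-1}$ appearing in~\eqref{eq:quantum_conjugation_action}, the identification of the matrix unit $e^i_{lk}$ with the linear coordinate $\widehat{\Lambda}^i_{kl}$ under the trace pairing, and the factor of two from the symmetrised quantisation in~\eqref{eq:quantum_comoment_local} must all be consistent. Beyond this bookkeeping the computation is mechanical, and since the cancellation $\varepsilon_\alpha \varepsilon_{\alpha^*}^{-1} = -1$ is the only place where $\varepsilon$ intervenes, the verification is manifestly uniform across the bundle of Weyl algebras indexed by $\mathbf{A}$.
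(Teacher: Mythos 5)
Your proposal is correct and follows essentially the same route as the paper's proof: verify the comoment identity on the generators $\widehat{B}_\beta^{k'l'}$ for basis elements $\widehat{\Lambda}^i_{kl}\in\mathfrak{h}^*_i$, expand the commutator $[\widehat{\mu}^*_{\varepsilon,i}(\widehat{\Lambda}^i_{kl}),\widehat{B}_\beta^{k'l'}]$ via the Leibniz rule and the relations~\eqref{eq:commutation_relations_quantum}, use $\varepsilon_\alpha\varepsilon_{\alpha^*}^{-1}=-1$, and compare with the trace dual of the differentiated conjugation action~\eqref{eq:quantum_conjugation_action}. The only cosmetic difference is that you package the two cases $t(\beta)=i$ and $s(\beta)=i$ into a single formula with Kronecker deltas $\delta_{i,t(\beta)}$, $\delta_{i,s(\beta)}$, whereas the paper writes the two cases separately; your delta-formula agrees with the paper's case-by-case output.
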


\begin{proof}According to Definition~\ref{def:quantum_comoment} one must show that $\rho(\Lambda).\widehat{X} = \big[\widehat{\mu}^*(\Lambda),\widehat{X}\big]$ for $\Lambda \in \mathfrak{h}^*$ and $\widehat{B} \in A$. This can be checked locally, i.e., fixing $i \in I$ and a pair $\alpha, \alpha^* \in \mathcal{G}_1$ of opposite arrows with $t(\alpha) = i$. Then using the restriction~\eqref{eq:quantum_comoment_global} of the comoment map yields
\begin{align*}
 \big[\widehat{\mu}_i^*\big(\widehat{\Lambda}^i_{kl}\big),\widehat{B}_{\alpha}^{qr}\big]& = \frac{\varepsilon_{\alpha}}{2}\sum_m \big[\widehat{B}_{\alpha}^{km} \ast_{\varepsilon} \widehat{B}_{\alpha^*}^{ml} + \widehat{B}_{\alpha^*}^{ml} \ast_{\varepsilon} \widehat{B}_{\alpha}^{km},\widehat{B}_{\alpha}^{qr}\big] \\
 &= \frac{\varepsilon_{\alpha}}{2} \sum_m \widehat{B}_{\alpha}^{km} \ast_{\varepsilon} \big[\widehat{B}_{\alpha^*}^{ml},\widehat{B}_{\alpha}^{qr}\big] + \big[\widehat{B}_{\alpha^*}^{ml},\widehat{B}_{\alpha}^{qr}\big] \ast_{\varepsilon} \widehat{B}_{\alpha}^{km} \\
 &= \varepsilon_{\alpha}\varepsilon_{\alpha^*}^{-1} \sum_m \delta_{mr}\delta_{ql} \widehat{B}_{\alpha}^{km} = -\delta_{ql}\widehat{B}_{\alpha}^{kr},
\end{align*}
and analogously
\begin{equation*}
 \big[\widehat{\mu}^*_i(\widehat{\Lambda}^i_{kl}),\widehat{B}_{\alpha^*}^{qr}\big] = \delta_{kr}\widehat{B}_{\alpha^*}^{ql}.
\end{equation*}

This must be compared with the trace dual of the derivative of the conjugation action~\eqref{eq:quantum_conjugation_action}. It reads
\begin{equation*}
 \big(\Lambda^i\big)_{i \in \mathfrak{h}_i}.\big(\widehat{B}^{\alpha}\big)_{\alpha \in \mathcal{G}_1} = \big(\widehat{B}^{\alpha}\Lambda_{s(\alpha)} - \Lambda_{t(\alpha)}\widehat{B}^{\alpha}\big)_{\alpha \in \mathcal{G}_1},
\end{equation*}
which in this local situation reduces to
\begin{equation*}
 \Lambda^i.\big(\widehat{B}_{\alpha},\widehat{B}_{\alpha^*}\big) = \big(\widehat{B}_{\alpha^*}\Lambda^i,-\Lambda^i\widehat{B}_{\alpha}\big).
\end{equation*}
Finally, if $e_{lk} \in \mathfrak{h}_i$ denotes the trace dual of $\Lambda^i_{kl} \in \mathfrak{h}^*_i$~-- i.e., the standard basis vector satisfying $\Tr(e_{lk} \cdot) = \Lambda_{kl}$~-- then
\begin{equation*}
 \big({-}e_{lk}\widehat{B}_{\alpha}\big)_{qr} = -\sum_m (e_{lk})_{qm}\widehat{B}_{\alpha}^{mr} = -\sum_m \delta_{ql}\delta_{km} \widehat{B}_{\alpha}^{mr} = -\delta_{ql}\widehat{B}_{\alpha}^{kr},
\end{equation*}
and analogously
\begin{equation*}
 \big(\widehat{B}_{\alpha^*}e_{lk}\big)_{qr} = \delta_{kr}\widehat{B}_{\alpha^*}^{ql},
\end{equation*}
as needed.
\end{proof}

\begin{Remark}Note that this proof would work verbatim for showing that~\eqref{eq:classical_local_comoment} is a comoment map for the classical $\widehat{H}$-action, replacing commutators with Poisson brackets. Indeed what is used here is that in these computations one just needs to use the Leibnitz identity of the Poisson bracket, and not the commutativity of the associative product.
\end{Remark}

So in conclusion~\eqref{eq:quantum_comoment_global} defines a quantum comoment map which moreover quantise the classical comoment for the $\widehat{H}$-action in the filtered quantisation setting. Then Section~\ref{sec:quantum_hamiltonian_reduction_formal} explains how to obtain an $\hslash$-deformed quantum comoment map $\widehat{\mu}^*_{\hslash} \colon \widehat{U}(\mathfrak{h}^*) \to \widehat{A}$ which is a deformation quantisation of $\mu^*$.

\begin{Remark}[about orbit quantisation]\label{remark:orbit_quantisation}
What is left is to choose an ideal of $\Sym(\mathfrak{h}) \cong \Sym(\mathfrak{h}^*)$ and deform it so that the quantum Hamiltonian reduction yields a deformation of the classical Hamiltonian reduction. This ideal $\mathcal{I}_{\breve{\mathcal{O}}}$ should be that of functions vanishing on some coadjoint orbit $\breve{\mathcal{O}} \subseteq \mathfrak{h}^*$~-- corresponding to an adjoint orbit under the trace duality~-- so that the algebraic Hamiltonian reduction can be related to the geometric symplectic reduction $\mathbb{M} \sslash_{\breve{\mathcal{O}}} \widehat{H}$ as in Remark~\ref{remark:geometric_symplectic_reduction}.

Concentrating on the semisimple orbits, the ideals of functions vanishing on them are not homogeneous. For example in the regular case they are generated by elements $D_i - c_i \in \Sym(\mathfrak{h})$, where $D_i \in \Sym(\mathfrak{h})^{\widehat{H}}$ is an $\widehat{H}$-invariant functions and $c_i \in \mathbb{C}$ the value of $D_i$ on the orbit. Since the associated graded of every ideal in $U(\mathfrak{h})$ is homogeneous such ideals do not admit filtered deformations, but a formal deformation exists in $\widehat{U}(\mathfrak{h})$: consider the ideal of $\widehat{U}(\mathfrak{h})$ generated by the elements $C_i \cdot \hslash^{|C_i|} - c_i$, where $C_i \in U(\mathfrak{h})$ are the Casimir generators of the centre of the universal enveloping algebra and $c_i$ the value of the principal symbol $\sigma_{\lvert C_i \rvert} (C_i) \in A_0^{\mathfrak{h}}$ on the orbit (see~\cite{donin_mudrov_2002_explicit_equivariant_quantization_on_coadjoint_orbits_of_gl_n_c,donin_mudrov_2002_quantum_coadjoint_orbits_in_gl_n} for the general semisimple case).

In the case of nilpotent orbit closures then one can a priori look for filtered deformations in~$U(\mathfrak{h})$ (see \cite[Chapter~10]{collingwood_mcgovern_1993_nilpotent_orbits_in_semisimple_lie_algebras} and the more recent \cite{losev_2016_deformation_of_symplectic_singularities_and_orbit_method_for_semisimple_lie_algebras}).
\end{Remark}

\subsection{Reduction of the quantum action}\label{sec:quantum_action_reduction_compatibility}

In what follows suppose to have chosen an ideal deforming $\mathcal{I}_{\breve{\mathcal{O}}}$, either in $U(\mathfrak{h})$ or $\widehat{U}(\mathfrak{h})$. Up to replacing ideals of $U(\mathfrak{h})$ with their homogeneous versions inside the Rees algebra the $\hslash$-deformed quantum Hamiltonian reduction $R_{\hslash}(\widehat{A},\mathfrak{h},\widehat{\mathcal{I}}_{\breve{\mathcal{O}},\hslash})$ is defined as in Section~\ref{sec:quantum_hamiltonian_reduction_filtered} and~\ref{sec:quantum_hamiltonian_reduction_formal}.

Now we consider again the particular case where $\mathcal{G}$ is a complete $k$-partite quiver on nodes $I = \coprod_{j \in J} I^j$, and where $\varepsilon$ is defined by an embedding $\bm{a} \colon J \hookrightarrow \mathbb{C} \cup \{\infty\}$ as in~\eqref{eq:symplectic_form_weights}. Then the quantum $\SL_2(\mathbb{C})$-action is defined as in Section~\ref{sec:quantisation_of_action}, and we want to reduce the morphisms $\widehat{\varphi}^*_{g,\hslash}(\bm{a}) \colon (A,\ast_{\bm{a}.g}) \to (A,\ast_{\bm{a}})$ for $g \in \SL_{2}(\mathbb{C})$, $\bm{a} \in \mathbf{A}$.

\begin{Theorem}\label{theorem:reduced_quantum_action}
The morphism $\widehat{\varphi}^*_{g,\hslash}(\bm{a})$ naturally induces a reduced morphism
\begin{equation*}
 R\widehat{\varphi}^*_{g,\hslash}(\bm{a}) \colon \ R_{\hslash}\big(\widehat{A},\ast_{\bm{a}.g}\big) \longrightarrow R_{\hslash}\big(\widehat{A},\ast_{\bm{a}}\big),
\end{equation*}
keeping track of the products in the notation~-- but omitting the Lie algebra and the ideal. Moreover if $g' \in \SL_2(\mathbb{C})$ is another element then $R\widehat{\varphi}^*_{gg',\hslash}(\bm{a}) = R\widehat{\varphi}^*_{g,\hslash}(\bm{a}) \circ R\widehat{\varphi}^*_{g',\hslash}(\bm{a}.g)$.
\end{Theorem}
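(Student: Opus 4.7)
The plan is to invoke item~(2) of Lemma~\ref{lemma:reduction_quantum_morphisms} with $\widehat{\varphi} = \widehat{\varphi}^*_{g,\hslash}(\bm{a}) \colon \big(\widehat{A},\ast_{\bm{a}.g}\big) \to \big(\widehat{A},\ast_{\bm{a}}\big)$ from Theorem~\ref{theorem:quantum_action}; this map is already a $\mathbb{C}\llbracket\hslash\rrbracket$-linear morphism of associative algebras, so what remains is to check that it sends $\mathfrak{h}$-invariants to $\mathfrak{h}$-invariants and maps $\widehat{\mathcal{J}}_{\bm{a}.g,\hslash}$ into $\widehat{\mathcal{J}}_{\bm{a},\hslash}$. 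Both conditions will follow from the single identity
\begin{equation*}
\widehat{\varphi}^*_{g,\hslash}(\bm{a}) \circ \widehat{\mu}^*_{\bm{a}.g,\hslash} = \widehat{\mu}^*_{\bm{a},\hslash},
\end{equation*}
which is the $\hslash$-deformed analogue of the classical compatibility in Lemma~\ref{lemma:compatibility_moment_classical_action}.

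To establish this identity I would apply $\widehat{\varphi}^*_{g,\hslash}(\bm{a})$ to the symmetrised expression~\eqref{eq:quantum_comoment_local} defining $\widehat{\mu}^*_{\bm{a}.g,i}\big(\widehat{\Lambda}^i_{kl}\big)$. Formula~\eqref{eq:quantum_action} rescales each pair of quantum generators $\big(\widehat{B}_\alpha^{km},\widehat{B}_{\alpha^*}^{ml}\big)$ by $\eta_{t(\alpha)}(g,\bm{a}) \cdot \eta_{s(\alpha)}(g,\bm{a})$ independently of the matrix indices $k,l,m$, and since the map converts $\ast_{\bm{a}.g}$-products into $\ast_{\bm{a}}$-products the symmetrisation is preserved. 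Combining the resulting coefficient with the weight $\varepsilon_\alpha(\bm{a}.g)$ appearing in~\eqref{eq:quantum_comoment_local} and invoking Lemma~\ref{lemma:action_cocycle_symplectic}~-- applied to the nodes of the quiver via the extension $\bm{a} \colon I \to \mathbb{C} \cup \{\infty\}$ already used in the proof of Lemma~\ref{lemma:compatibility_moment_classical_action}~-- collapses the product $\varepsilon_\alpha(\bm{a}.g) \cdot \eta_{t(\alpha)}(g,\bm{a}) \cdot \eta_{s(\alpha)}(g,\bm{a})$ to $\varepsilon_\alpha(\bm{a})$, yielding exactly $\widehat{\mu}^*_{\bm{a},i}\big(\widehat{\Lambda}^i_{kl}\big)$. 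The passage to the $\hslash$-rescaled comoment~\eqref{eq:deformed_quantum_comoment} is then automatic by $\mathbb{C}\llbracket\hslash\rrbracket$-linearity.

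The two compatibilities then follow formally. Any $f \in \widehat{A}$ satisfying $\big[f,\widehat{\mu}^*_{\bm{a}.g,\hslash}(x)\big]_{\ast_{\bm{a}.g}} = 0$ for all $x \in \mathfrak{h}$ is mapped to an element satisfying $\big[\widehat{\varphi}^*_{g,\hslash}(\bm{a})(f),\widehat{\mu}^*_{\bm{a},\hslash}(x)\big]_{\ast_{\bm{a}}} = 0$, since $\widehat{\varphi}^*_{g,\hslash}(\bm{a})$ is an algebra morphism and the identity above rewrites the image of $\widehat{\mu}^*_{\bm{a}.g,\hslash}(x)$ as $\widehat{\mu}^*_{\bm{a},\hslash}(x)$. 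Similarly the image of any $\ast_{\bm{a}.g}$-product in $\widehat{\mathcal{J}}_{\bm{a}.g,\hslash}$ is a $\ast_{\bm{a}}$-product containing a factor in $\widehat{\mu}^*_{\bm{a},\hslash}\big(\widehat{\mathcal{I}}_{\breve{\mathcal{O}},\hslash}\big)$, hence an element of $\widehat{\mathcal{J}}_{\bm{a},\hslash}$. Lemma~\ref{lemma:reduction_quantum_morphisms} then produces the reduced morphism $R\widehat{\varphi}^*_{g,\hslash}(\bm{a})$, and the cocycle identity $R\widehat{\varphi}^*_{gg',\hslash}(\bm{a}) = R\widehat{\varphi}^*_{g,\hslash}(\bm{a}) \circ R\widehat{\varphi}^*_{g',\hslash}(\bm{a}.g)$ is immediate from the unreduced cocycle of Theorem~\ref{theorem:quantum_action} and the uniqueness clause of the universal property of the quotient, as both sides close the same diagram with the canonical projections.

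The only slightly delicate point will be verifying that the scalar rescaling~\eqref{eq:quantum_action} pulls through the symmetrised comoment~\eqref{eq:quantum_comoment_local} cleanly: this hinges on the fact that $\eta_{i_p}(g,\bm{a})$ depends only on the superscripts of the quantum generators and not on the matrix indices, so that the two summands $\widehat{B}_\alpha^{km} \ast_{\bm{a}} \widehat{B}_{\alpha^*}^{ml}$ and $\widehat{B}_{\alpha^*}^{ml} \ast_{\bm{a}} \widehat{B}_\alpha^{km}$ acquire the same prefactor. Once this is observed, the argument reduces to the classical cocycle Lemma~\ref{lemma:action_cocycle_symplectic} and no further computation is required.
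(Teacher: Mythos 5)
Your proof is correct and follows essentially the same route as the paper's: reduce to checking the hypotheses of Lemma~\ref{lemma:reduction_quantum_morphisms}(2), with the crux being the comoment-compatibility identity $\widehat{\varphi}^*_{g,\hslash}(\bm{a}) \circ \widehat{\mu}^*_{\bm{a}.g,\hslash} = \widehat{\mu}^*_{\bm{a},\hslash}$, which is established by applying the cocycle Lemma~\ref{lemma:action_cocycle_symplectic} (extended to nodes of $I$) to each symmetrised summand of~\eqref{eq:quantum_comoment_local}, exactly as in the quantum analogue of Lemma~\ref{lemma:compatibility_moment_classical_action}. The one small organisational difference is that you derive the preservation of $\mathfrak{h}$-invariants as a formal consequence of the same comoment identity, via the description of invariants as the centraliser of the comoment image under a commutator-preserving map; the paper instead argues this point separately, by noting that the quantum $\SL_2(\mathbb{C})$-action is defined independently of any basis choice and hence commutes with the $\widehat{H}$-action. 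Both rest on the same underlying facts, so this is a harmless (and slightly cleaner) streamlining rather than a different argument. The handling of the cocycle identity via uniqueness in the universal property of the quotient coincides with the paper's.
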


\begin{proof}It is helpful to start from filtered quantisation and show that the family of morphisms $\widehat{\varphi}^*_g(\bm{a}) \colon (A,\ast_{\bm{a}.g}) \to (A,\ast_{\bm{a}})$ in filtered quantisation commutes with the $\widehat{H}$-action~\eqref{eq:quantum_conjugation_action} and ``preserves'' the values of the quantum comoment map~\eqref{eq:quantum_comoment_global}.

As for the action, the statement follows from the fact that the quantum $\SL_2(\mathbb{C})$-action does not depend on a choice of basis for $V$. This yields the inclusion $\widehat{\varphi}^*_g(\bm{a}) \big(A^{\mathfrak{h}}\big) \subseteq A^{\mathfrak{h}}$, and from Lemma~\ref{lemma:reduction_quantum_morphisms} one gets $\widehat{\varphi}^*_{g,\hslash}(\bm{a}) \big(\widehat{A}^{\mathfrak{h}}\big) \subseteq \widehat{A}^{\mathfrak{h}}$.

As for the quantum comoment, the proof of Lemma~\ref{lemma:compatibility_moment_classical_action} is easily modified as it relies on Lemma~\ref{lemma:action_cocycle_symplectic}, and the cocycles $\eta_i \colon \SL_2(\mathbb{C}) \times \mathbf{A} \to \mathbb{C}^{\times}$ of~\eqref{eq:action_cocycle} are the same for the classical and quantum action. Hence one has
\begin{equation}\label{eq:compatibility_moment_quantum_action}
 \widehat{\varphi}^*_g(\bm{a}) \circ \widehat{\mu}^*_{\bm{a}.g} = \widehat{\mu}^*_{\bm{a}}, \qquad \text{for} \quad g \in \SL_2(\mathbb{C}), \, \bm{a} \in \mathbf{A},
\end{equation}
where $\widehat{\mu}^*_a$ is the quantum comoment~\eqref{eq:quantum_comoment_global} for the above choice of $\varepsilon = \varepsilon(\bm{a})$.

Now~\eqref{eq:compatibility_moment_quantum_action} directly implies
\begin{equation*}
 \widehat{\varphi}^*_{g,\hslash}(\bm{a}) \circ \widehat{\mu}^*_{\bm{a}.g,\hslash} = \widehat{\mu}^*_{\bm{a},\hslash}, \qquad \text{for} \quad g \in \SL_2(\mathbb{C}), \  \bm{a} \in \mathbf{A},
\end{equation*}
as $\widehat{\mu}^*_{\bm{a},\hslash} \big(\widehat{\Lambda} \hslash\big) = \widehat{\mu}^*_{\bm{a}} \big(\widehat{\Lambda}\big) \cdot \hslash^2$ for $\widehat{\Lambda} \in \mathfrak{h}$ and $\widehat{\varphi}^*_{g,\hslash}(\bm{a})$ acts as $\widehat{\varphi}^*_g(\bm{a})$ on the coefficients of power series. Then if $\widehat{\mathcal{I}}_{\hslash}$ is any ideal in $\widehat{U}(\mathfrak{g})$ one has $\widehat{\varphi}^*_{g,\hslash} \big(\widehat{\mu}_{\bm{a}.g,\hslash}\big(\widehat{\mathcal{I}}_{\hslash}\big)\big) \subseteq \widehat{\mathcal{J}}_{\bm{a},\hslash}$, where $\widehat{\mathcal{J}}_{\bm{a},\hslash} \subseteq \widehat{A}$ is the two-sided ideal generated by $\widehat{\mu}^*_{\bm{a},\hslash}\big(\widehat{\mathcal{I}}_{\hslash}\big)$. It follows that the ideal generated by $\widehat{\mu}^*_{\bm{a},\hslash}\big(\widehat{\mathcal{I}}_{\hslash}\big)$ is also sent inside~$\widehat{\mathcal{J}}_{\bm{a},\hslash}$, as $\widehat{\varphi}^*_{g,\hslash}(\bm{a})$ is a ring morphism.

Then the existence of the reduced morphism is assured by the second item of Lemma~\ref{lemma:reduction_quantum_morphisms}. Further by uniqueness the identity in the statement of Theorem~\ref{theorem:reduced_quantum_action} follows from the analogous one in the statement of Theorem~\ref{theorem:quantum_action}.
\end{proof}

\begin{Remark}In particular we have shown that reduced morphisms $R\widehat{\varphi}^*_g(\bm{a})$ also exist in filtered quantum Hamiltonian reduction for every choice of ideal in $U(\mathfrak{h})$. By uniqueness they satisfy $R\widehat{\varphi}^*_{gg'}(\bm{a}) = R\widehat{\varphi}^*_g(\bm{a}) \circ R\widehat{\varphi}^*_{g'}(\bm{a}.g)$. Moreover both $R\widehat{\varphi}^*_g(\bm{a})$ and $R\widehat{\varphi}^*_{g,\hslash}(\bm{a})$ are isomorphisms inverted by $R\widehat{\varphi}_{g^{-1}}(\bm{a}.g)$ and $R\widehat{\varphi}^*_{g^{-1},\hslash} (\bm{a}.g)$, respectively.
\end{Remark}

Consider the bundle of noncommutative $\mathbb{C} \llbracket \hslash \rrbracket$-algebras $R_{\hslash}\big(\widehat{\mathcal{A}},\breve{\mathcal{O}}\big) \to \mathbf{A}$ whose fibre over the embedding $a$ is the quantum Hamiltonian reduction of $(\widehat{A},\ast_{\bm{a}})$ at the orbit $\breve{\mathcal{O}} \subseteq \Sym(\mathfrak{h})$. This is not a trivial bundle of associative algebras, but the quantum Hamiltonian reduction alwasy defines a formal deformation of the classical Hamiltonian reduction: this yields a global trivialisation of $R_{\hslash}\big(\mathcal{\widehat{A}},\breve{\mathcal{O}}\big)$ as bundle of $\mathbb{C} \llbracket \hslash \rrbracket$-modules via the canonical $a$-independent identifications $R_{\hslash}(\widehat{A}) \cong R(A_0) \llbracket \hslash \rrbracket$. Then the assignment $(g,\bm{a}) \mapsto R\widehat{\varphi}^*_{g,\hslash}(\bm{a})$ lifts the action on the base to one on the total space.

An analogous statement holds for the bundle of filtered quantum Hamiltonian reductions. Hence we have constructed the quantum Hamiltonian reduction of the action of Theorem~\ref{theorem:quantum_action}, both in filtered quantisation and in deformation quantisation.

\section{Quantum symmetries}\label{sec:quantum_invariance}

In this section we reduce the universal simply-laced quantum connection of Section~\ref{sec:slqc}, and shows the reduction is projectively invariant under the reduced quantum action of Theorem~\ref{theorem:reduced_quantum_action}.

By Remark~\ref{remark:quantum_trace_invariance} the universal simply-laced quantum Hamiltonians $\widehat{H}_i$ are invariant for the $\mathfrak{h}$-action on $\widehat{A}$ defined by~\eqref{eq:quantum_conjugation_action}, i.e., the action induced by the quantum comoment map~\eqref{eq:quantum_comoment_global}. Hence one can map them along the canonical projection $\widehat{\pi}_{\breve{\mathcal{O}}} \colon \widehat{A}^{\mathfrak{h}} \to R_{\hslash}\big(\widehat{A},\mathfrak{h},\widehat{\mathcal{I}}_{\breve{\mathcal{O}},\hslash}\big)$.

\begin{Definition}The element $R\widehat{H}_i \coloneqq \widehat{\pi}_{\breve{\mathcal{O}}}\big(\widehat{H}_i\big)$ is the reduction of the universal simply-laced quantum Hamiltonian $\widehat{H}_i$, for $i \in I$.
\end{Definition}

\begin{Remark}By construction the semiclassical limit of the reduced quantum Hamiltonians equals the reduced classical Hamiltonians, since the projections to the Hamiltonian reductions commute with semiclassical limits. Namely, if one denotes $\widehat{\mathcal{J}}_{\bm{a},\breve{\mathcal{O}},\hslash}$ (resp.~$\mathcal{J}_{\bm{a},\breve{\mathcal{O}}}$) the ideal generated by $\widehat{\mu}^*_{\bm{a},\hslash}\big(\widehat{\mathcal{I}}_{\breve{\mathcal{O}},\hslash}\big)$ inside $\widehat{A}$ (resp. by $\mu^*_{\bm{a}}(\mathcal{I}_{\breve{\mathcal{O}}})$ inside $A_0$) then the reduction of $\widehat{H}_i$ is by definition
\begin{equation*}
 R\widehat{H}_i = \widehat{H}_i + \widehat{\mathcal{J}}_{\bm{a},\breve{\mathcal{O}},\hslash} \in R_{\hslash}\big(\widehat{A},\mathfrak{h},\widehat{\mathcal{I}}_{\breve{\mathcal{O}},\hslash}\big),
\end{equation*}
whose semiclassical limit is
\begin{equation*}
 H_i + \mathcal{J}_{\bm{a},\breve{\mathcal{O}}} = RH_i \in R\big(A_0,\mathfrak{h},\mathcal{I}_{\breve{\mathcal{O}}}\big),
\end{equation*}
using the fact that $\widehat{H}_i$ and $\widehat{\mathcal{J}}_{\bm{a},\breve{\mathcal{O}},\hslash}$ quantise $H_i$ and $\mathcal{J}_{\breve{\mathcal{O}}}$, respectively.
\end{Remark}

Now the quantum analogue of Theorem~\ref{theorem:classical_invariance} can be proved for suitable choices of (co)adjoint orbits. To this end let $f \in A_0^{\widehat{H}}$ be the (finite) product of all traces of isomonodromy cycles of Fig.~\ref{fig:imd_cycles}, and consider the distinguished (Zariski dense) open subset $D(f) \coloneqq \{ B\in \mathbb{M}\,|\,f(B) \neq 0\}$, where $\mathbb{M}$ has the natural structure of complex affine space.

For $\bm{a} \in \mathbf{A}$ set then $U_{\bm{a}} \coloneqq \mu_{\bm{a}}\big(D(f)\big) \subseteq \mathfrak{h}$.

\begin{Lemma}\label{lemma:choice_good_orbits}The subspace $U_{\bm{a}}$ is $\widehat{H}$-invariant, and one has $\Tr(C) \not\in \mathcal{J}_{\bm{a},\breve{\mathcal{O}}}$ for all $\widehat{H}$-orbits $\breve{\mathcal{O}} \subseteq U_{\bm{a}}$ and for all isomonodromy cycles $C \in \mathbb{C}\mathcal{G}_{\cycl}$. Moreover $U_{\bm{a}}$ only depends on the class of the embedding $\bm{a}$ for the $\SL_2(\mathbb{C})$-action.
\end{Lemma}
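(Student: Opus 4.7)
The plan is to establish the three assertions in order, using only the $\widehat{H}$-equivariance of the moment map, the explicit $\SL_2(\mathbb{C})$-transformation formula of Section~\ref{sec:action_on_weyl_modules}, and the geometric description of $\mathcal{J}_{\bm{a},\breve{\mathcal{O}}}$ as an ideal cutting out $\mu_{\bm{a}}^{-1}(\breve{\mathcal{O}})$.

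First I would check the $\widehat{H}$-invariance of $U_{\bm{a}}$. Since $f$ is a product of traces of (classical) cycles, it lies in $A_0^{\widehat{H}}$, so the open subset $D(f) \subseteq \mathbb{M}$ is $\widehat{H}$-stable. As $\mu_{\bm{a}}$ is $\widehat{H}$-equivariant with respect to the adjoint action on $\mathfrak{h}$, the image $U_{\bm{a}} = \mu_{\bm{a}}(D(f))$ is a union of adjoint orbits, hence in particular $\widehat{H}$-invariant.

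For the $\SL_2(\mathbb{C})$-independence, I would combine Lemma~\ref{lemma:compatibility_moment_classical_action}, which gives $\mu_{\bm{a}} = \mu_{\bm{a}.g} \circ \varphi_g(\bm{a})$, with the fact that the linear isomorphism $\varphi_g(\bm{a}) \in \GL(\mathbb{M})$ preserves $D(f)$. Indeed by~\eqref{eq:classical_action_decomposition} (or equivalently the computation in the proof of Lemma~\ref{lemma:quantisation_and_action_commute}) the pull-back $\varphi^*_g(\bm{a}) \Tr(C)$ equals a nonzero scalar multiple of $\Tr(C)$ for every cycle $C$, so that $\varphi^*_g(\bm{a}) f = \eta_f \cdot f$ for some $\eta_f \in \mathbb{C}^{\times}$. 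Hence $\varphi_g(\bm{a})(D(f)) = D(f)$, and applying $\mu_{\bm{a}.g}$ gives
\begin{equation*}
 U_{\bm{a}} = \mu_{\bm{a}}(D(f)) = \mu_{\bm{a}.g}\big(\varphi_g(\bm{a})(D(f))\big) = \mu_{\bm{a}.g}(D(f)) = U_{\bm{a}.g},
\end{equation*}
which is the desired invariance.

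For the nonvanishing statement, which I expect to be the main technical point, I would argue by contraposition using the zero locus description. By definition every element of $\mathcal{J}_{\bm{a},\breve{\mathcal{O}}}$ is a finite sum $\sum_k h_k \cdot \mu_{\bm{a}}^*(x_k)$ with $h_k \in A_0$ and $x_k \in \mathcal{I}_{\breve{\mathcal{O}}}$, and evaluating at any $B \in \mu_{\bm{a}}^{-1}(\breve{\mathcal{O}})$ yields zero since $x_k$ vanishes on $\breve{\mathcal{O}}$. Thus every element of $\mathcal{J}_{\bm{a},\breve{\mathcal{O}}}$ vanishes identically on $\mu_{\bm{a}}^{-1}(\breve{\mathcal{O}})$. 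Now the hypothesis $\breve{\mathcal{O}} \subseteq U_{\bm{a}}$ means that some $B \in D(f)$ satisfies $\mu_{\bm{a}}(B) \in \breve{\mathcal{O}}$; but then $f(B) \neq 0$ forces $\Tr(C)(B) \neq 0$ for each isomonodromy cycle $C$ (since $\Tr(C)$ is one of the factors of $f$), so $\Tr(C)$ does not vanish on $\mu_{\bm{a}}^{-1}(\breve{\mathcal{O}})$ and cannot lie in $\mathcal{J}_{\bm{a},\breve{\mathcal{O}}}$. The one subtlety to keep track of is that $\mathcal{I}_{\breve{\mathcal{O}}}$ should be the vanishing ideal of $\breve{\mathcal{O}}$ as in Remark~\ref{remark:geometric_symplectic_reduction}, so that the above set-theoretic vanishing argument applies without needing $\mu_{\bm{a}}^*(\mathcal{I}_{\breve{\mathcal{O}}})$ to be radical.
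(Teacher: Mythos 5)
Your proof is correct and follows essentially the same three-step argument as the paper: $\widehat{H}$-stability of $D(f)$ plus equivariance of $\mu_{\bm{a}}$, the rescaling $\varphi^*_g(\bm{a})f = \eta_f f$ plus Lemma~\ref{lemma:compatibility_moment_classical_action}, and nonvanishing of $\Tr(C)$ on $\mu_{\bm{a}}^{-1}(\breve{\mathcal{O}})$. The only difference is that you spell out explicitly why nonvanishing on the level set implies $\Tr(C)\notin\mathcal{J}_{\bm{a},\breve{\mathcal{O}}}$, a step the paper leaves implicit; that added precision (and your remark that only set-theoretic vanishing is needed) is sound.
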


\begin{proof}The nonempty set $D(f)$ is $\widehat{H}$-stable, since all traces of cycles are $\widehat{H}$-invariant functions. Then the nonempty set $U_{\bm{a}}$ is a union of $\widehat{H}$-orbits, since the moment map is $\widehat{H}$-equivariant. By construction, any $\widehat{H}$-orbit $\breve{\mathcal{O}}$ contained in $U_{\bm{a}}$ is such that $D(f) \cap \mu_{\bm{a}}^{-1}(\breve{\mathcal{O}}) \neq \varnothing$, and thus $\left.\Tr(C)\right\vert_{\mu_{\bm{a}}^{-1}(\breve{\mathcal{O}})} \neq 0$ for all isomonodromy cycles.

As for the second statement, by~\eqref{eq:classical_action_decomposition} the classical $\SL_2(\mathbb{C})$-action at $(g,\bm{a}) \in \SL_2(\mathbb{C}) \times \mathbf{A}$ multiplies the trace $\Tr(C)$ of a cycle~$C$ by the number $\eta = \prod_{e \in C_1} \eta_{t(e)}(g,\bm{a}) \in \mathbb{C}^*$, where $C_1$ is the set of arrows of~$C$~-- as subquiver of~$\mathcal{G}$. Hence the function $\varphi^*_g(\bm{a}) f$ is a nonzero multiple of~$f$, and $\varphi_g(\bm{a})$ stabilises $D(f)$ and $V(f) \coloneqq \{B \in \mathbb{M}|f(B) = 0\}$. Since $\varphi_g(\bm{a})$ is bijective and $\mathbb{M} = D(f) \coprod V(f)$ this implies $\varphi_g(\bm{a}) \big(D(f)\big) = D(f)$, and by Lemma~\eqref{lemma:compatibility_moment_classical_action}
\begin{equation*}
 U_{\bm{a}} = \mu_{\bm{a}} \big(D(f)\big) = \mu_{\bm{a}.g} \big(D(f)\big).\tag*{\qed}
\end{equation*}\renewcommand{\qed}{}
\end{proof}

Choose then $\bm{a} \in \mathbf{A}$ and an orbit $\breve{\mathcal{O}} \subseteq U_{[\bm{a}]}$, where $[\bm{a}] \in \mathbf{A} \big\slash \SL_2(\mathbb{C})$ is the class of $\bm{a}$. Let $\mathcal{Q}_{\bm{a},\hslash} \colon \Tr(\mathbb{C}\mathcal{G}_{\cycl}) \to \widehat{A}$ be the quantisation map of Section~\ref{sec:quantum_trace}, and define $\IMD \subseteq \mathbb{C}\mathcal{G}_{\cycl}$ as the finite-dimensional vector space spanned by isomonodromy cycles and the zero-length cycles at each node.

\begin{Corollary}[quantisation and reduction commute]\label{cor:quantisation_and_reduction_commute}There exists a reduced quantisation map~$R\mathcal{Q}_{\bm{a},\hslash}$ defined on the classical Hamiltonian reduction of $\Tr(\IMD)$ and taking values into the quantum Hamiltonian reduction $R_{\hslash}\big(\widehat{A},\mathfrak{h},\widehat{\mathcal{I}}_{\breve{\mathcal{O}}}\big)$. It sends the reduced classical simply-laced isomonodromy system to the reduced quantum one, and it intertwines the reduced $\SL_2(\mathbb{C})$-actions along the orbit of $\bm{a}$:
\begin{equation*}
 R\mathcal{Q}_{\bm{a},\hslash} \circ R\varphi^*_g(\bm{a}) = R\widehat{\varphi}_{g,\hslash}(\bm{a}) \circ R\mathcal{Q}_{\bm{a},\hslash}, \qquad \text{for} \quad g \in \SL_2(\mathbb{C}).
\end{equation*}
\end{Corollary}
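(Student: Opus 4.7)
The plan is to realise $R\mathcal{Q}_{\bm{a},\hslash}$ as the $\mathbb{C}$-linear map induced by the composition $\widehat{\pi}_{\breve{\mathcal{O}}} \circ \mathcal{Q}_{\bm{a},\hslash}\colon \Tr(\IMD) \to R_{\hslash}\big(\widehat{A},\mathfrak{h},\widehat{\mathcal{I}}_{\breve{\mathcal{O}},\hslash}\big)$ via the universal property of quotients, and then verify each of the three assertions in turn.

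First I would observe that by Remark \ref{remark:quantum_trace_invariance} the image of $\mathcal{Q}_{\bm{a},\hslash}$ on $\Tr(\IMD)$ consists of $\widehat{H}$-invariants, so that the projection $\widehat{\pi}_{\breve{\mathcal{O}}}$ can indeed be applied. The main obstacle is then checking that the resulting composition descends to the classical reduction $\pi_{\breve{\mathcal{O}}}(\Tr(\IMD))$, that is, that it vanishes on $\Tr(\IMD) \cap \mathcal{J}_{\bm{a},\breve{\mathcal{O}}}$. Here the generic-orbit hypothesis $\breve{\mathcal{O}} \subseteq U_{[\bm{a}]}$ from Lemma \ref{lemma:choice_good_orbits} is essential: it guarantees that the traces of isomonodromy cycles are nonzero in the classical reduction, pinning down the possible linear relations that must also hold at the quantum level. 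I would then argue that any classical identity $\sum_\alpha c_\alpha \Tr(C_\alpha) + c_0 \in \mathcal{J}_{\bm{a},\breve{\mathcal{O}}}$, for isomonodromy cycles $C_\alpha$ and a constant $c_0$, lifts to the quantum identity $\sum_\alpha c_\alpha \mathcal{Q}_{\bm{a},\hslash}\bigl(\Tr(C_\alpha)\bigr) + c_0 \in \widehat{\mathcal{J}}_{\bm{a},\breve{\mathcal{O}},\hslash}$, exploiting that the quantum ideal is a formal deformation of the classical one together with the fact that $\sigma \circ \mathcal{Q}_{\bm{a},\hslash}$ is the identity on $\Tr(\IMD)$.

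Granted well-definedness, the assertion $R\mathcal{Q}_{\bm{a},\hslash}(RH_i) = R\widehat{H}_i$ will be immediate: since $H_i = \Tr(W_i)$ with $W_i \in \IMD$ an isomonodromy potential one has
\begin{equation*}
 R\mathcal{Q}_{\bm{a},\hslash}(RH_i) = \widehat{\pi}_{\breve{\mathcal{O}}}\bigl(\mathcal{Q}_{\bm{a},\hslash}(H_i)\bigr) = \widehat{\pi}_{\breve{\mathcal{O}}}\bigl(\widehat{H}_i\bigr) = R\widehat{H}_i,
\end{equation*}
transporting the reduced classical isomonodromy system onto the reduced quantum one.

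For the intertwining with the reduced $\SL_2(\mathbb{C})$-actions I would descend Lemma \ref{lemma:quantisation_and_action_commute} by taking quotients. By \eqref{eq:classical_action_decomposition} the map $\varphi^*_g(\bm{a})$ rescales each trace of an isomonodromy cycle by a nonzero scalar, hence preserves $\Tr(\IMD)$; and by the last assertion of Lemma \ref{lemma:choice_good_orbits} the generic-orbit condition is $\SL_2(\mathbb{C})$-invariant, so that $R\varphi^*_g(\bm{a})$ restricts to a map between the relevant classical reductions of $\Tr(\IMD)$. The required identity then follows by projecting the equality of Lemma \ref{lemma:quantisation_and_action_commute} applied to a representative $x \in \Tr(\IMD)$ to the respective Hamiltonian reductions, invoking the uniqueness of the reduced morphisms provided by Theorem \ref{theorem:reduced_quantum_action}.
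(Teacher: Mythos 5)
Your overall strategy matches the paper's: define $R\mathcal{Q}_{\bm{a},\hslash}$ by requiring $R\mathcal{Q}_{\bm{a},\hslash} \circ \pi_{\breve{\mathcal{O}}} = \widehat{\pi}_{\breve{\mathcal{O}}} \circ \mathcal{Q}_{\bm{a},\hslash}$ on $\Tr(\IMD)$, verify that this descends, then deduce the transport of the Hamiltonians and the intertwining from the defining relation together with Lemma~\ref{lemma:quantisation_and_action_commute}. The observation that the image of $\mathcal{Q}_{\bm{a},\hslash}$ lies in $\widehat{A}^{\mathfrak{h}}$ (Remark~\ref{remark:quantum_trace_invariance}), the chain of equalities giving $R\mathcal{Q}_{\bm{a},\hslash}(RH_i) = R\widehat{H}_i$, and the descent of Lemma~\ref{lemma:quantisation_and_action_commute} are all exactly as in the paper.

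Where you diverge is the well-definedness step, and there you have a genuine gap. You must check that $\widehat{\pi}_{\breve{\mathcal{O}}} \circ \mathcal{Q}_{\bm{a},\hslash}$ annihilates $\Tr(\IMD) \cap \mathcal{J}_{\bm{a},\breve{\mathcal{O}}}$, and you propose that any classical relation $\sum_\alpha c_\alpha \Tr(C_\alpha) + c_0 \in \mathcal{J}_{\bm{a},\breve{\mathcal{O}}}$ ``lifts'' under $\mathcal{Q}_{\bm{a},\hslash}$ to an element of $\widehat{\mathcal{J}}_{\bm{a},\breve{\mathcal{O}},\hslash}$, because the quantum ideal formally deforms the classical one and $\sigma \circ \mathcal{Q}_{\bm{a},\hslash} = \mathrm{id}$. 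That deduction does not go through: the fact that $\widehat{\mathcal{J}}_{\bm{a},\breve{\mathcal{O}},\hslash}\big/\hslash\,\widehat{\mathcal{J}}_{\bm{a},\breve{\mathcal{O}},\hslash} \cong \mathcal{J}_{\bm{a},\breve{\mathcal{O}}}$ tells you that every $x \in \mathcal{J}_{\bm{a},\breve{\mathcal{O}}}$ has \emph{some} preimage in the quantum ideal under the semiclassical limit, but gives you no control over whether the particular lift $\mathcal{Q}_{\bm{a},\hslash}(x)$ is one of them. The preimage $\sigma^{-1}\bigl(\mathcal{J}_{\bm{a},\breve{\mathcal{O}}}\bigr)$ is in general strictly larger than $\widehat{\mathcal{J}}_{\bm{a},\breve{\mathcal{O}},\hslash}$, so having semiclassical limit in $\mathcal{J}_{\bm{a},\breve{\mathcal{O}}}$ does not place an element of $\widehat{A}$ in $\widehat{\mathcal{J}}_{\bm{a},\breve{\mathcal{O}},\hslash}$.

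The paper's argument is stronger and bypasses the lifting issue entirely: for the chosen orbit it asserts $\Tr(\IMD) \cap \mathcal{J}_{\bm{a},\breve{\mathcal{O}}} \subseteq \Ker(\mathcal{Q}_{\bm{a},\hslash})$, and since $\sigma \circ \mathcal{Q}_{\bm{a},\hslash}$ is the identity on $\Tr(\IMD)$ the map $\mathcal{Q}_{\bm{a},\hslash}$ is injective there, so the assertion is precisely that $\Tr(\IMD) \cap \mathcal{J}_{\bm{a},\breve{\mathcal{O}}} = (0)$. That is what the generic-orbit hypothesis $\breve{\mathcal{O}} \subseteq U_{[\bm{a}]}$ via Lemma~\ref{lemma:choice_good_orbits} is buying you: once the intersection is trivial, well-definedness is vacuous. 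To repair your proof you should either argue directly that the intersection is trivial, or replace the ``lifting'' step with a real reason why $\mathcal{Q}_{\bm{a},\hslash}$ carries $\Tr(\IMD) \cap \mathcal{J}_{\bm{a},\breve{\mathcal{O}}}$ into the quantum ideal; the deformation property of the ideal alone cannot supply one.
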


\begin{proof}If it exists, the reduced map $R\mathcal{Q}_{\bm{a},\hslash}$ is defined by the identity $R\mathcal{Q}_{\bm{a},\hslash} \circ \pi_{\breve{\mathcal{O}}} = \widehat{\pi}_{\breve{\mathcal{O}}} \circ \mathcal{Q}_{\bm{a},\hslash}$ on $\Tr(\IMD) \subseteq A_0^{\mathfrak{h}}$. The existence follows from $\mathcal{J}_{\bm{a},\breve{\mathcal{O}}} \cap \Tr(\IMD) \subseteq \Ker(\mathcal{Q}_{\bm{a},\hslash})$, which is verified for this choice of orbit, and the defining identity assures that
\begin{equation}
 R\mathcal{Q}_{\bm{a},\hslash} (RH_i) = R\mathcal{Q}_{\bm{a},\hslash} \circ \pi_{\breve{\mathcal{O}}}(H_i) = \widehat{\pi}_{\breve{\mathcal{O}}} \circ \mathcal{Q}_{\bm{a},\hslash} (H_i) = \widehat{\pi}_{\breve{\mathcal{O}}} \big(\widehat{H}_i\big) = R\widehat{H}_i.
\end{equation}

The last statement follows from Lemma~\ref{lemma:quantisation_and_action_commute}.
\end{proof}

An analogous statement holds for the quantisation map in filtered quantisation.

\subsection{Quantum invariance}

Now consider the reduced quantum isomonodromy equations for a local section $\widehat{H}$ of the bundle $R_{\hslash}\big(\widehat{A},\ast_{\bm{a}}\big) \times \mathbf{B} \to \mathbf{B}$. They read
\begin{equation}
\label{eq:reduced_quantum_imd_equations}
 \partial_{t_i} \widehat{H} = \big[R\widehat{H}_i,\widehat{H}\big]_{\bm{a}},
\end{equation}
using the commutator for the $\bm{a}$-dependent noncommutative product of the quantum Hamiltonian reduction. The differential equations~\eqref{eq:reduced_quantum_imd_equations} are the dynamical equations for the time-evolution of the quantum observable $\widehat{H}$ in the Heisenberg picture of motion in quantum mechanics~-- where the quantum state is fixed.

\begin{Theorem}\label{theorem:quantum_invariance}
The reduced quantum isomonodromy equations~\eqref{eq:reduced_quantum_imd_equations} are invariant under the reduced quantum $\SL_2(\mathbb{C})$-action along the $\SL_2(\mathbb{C})$-orbit of $\bm{a}$.
\end{Theorem}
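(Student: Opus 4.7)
The plan is to mimic at the quantum level the classical argument following Theorem~\ref{theorem:classical_invariance}, replacing the Poisson bracket with the commutator of $R_\hslash\big(\widehat{A},\ast_{\bm{a}}\big)$. The crucial intermediate step is a quantum analogue of the scalar shift~\eqref{eq:classical_shift}: I aim to establish that for every $g \in \SL_2(\mathbb{C})$, $i \in I$ and $\bm{a}$ in the orbit there exists a scalar $\widehat{c}_i \in \mathbb{C}$ (a priori $\hslash$- and $\bm{t}$-dependent) such that
\begin{equation*}
 R\widehat{\varphi}^*_{g,\hslash}(\bm{a})\, R\widehat{H}_i = R\widehat{H}_i + \widehat{c}_i \qquad \text{inside} \quad R_\hslash\big(\widehat{A},\ast_{\bm{a}}\big).
\end{equation*}

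To produce this identity I apply the reduced quantisation map $R\mathcal{Q}_{\bm{a},\hslash}$ of Corollary~\ref{cor:quantisation_and_reduction_commute} to both sides of the classical shift~\eqref{eq:classical_shift}. Since that corollary states that $R\mathcal{Q}_{\bm{a},\hslash}$ intertwines the reduced classical and quantum $\SL_2(\mathbb{C})$-actions, and since $RH_i$ lies in its domain with image $R\widehat{H}_i$, one finds
\begin{equation*}
 R\widehat{\varphi}^*_{g,\hslash}(\bm{a})\, R\widehat{H}_i = R\widehat{\varphi}^*_{g,\hslash}(\bm{a}) \circ R\mathcal{Q}_{\bm{a},\hslash}(RH_i) = R\mathcal{Q}_{\bm{a},\hslash}\big(R\varphi^*_g(\bm{a})\, RH_i\big) = R\mathcal{Q}_{\bm{a},\hslash}(RH_i) + R\mathcal{Q}_{\bm{a},\hslash}(c_i),
\end{equation*}
using the $\mathbb{C}$-linearity of the quantisation map. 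The constant $c_i$ lies in the domain of $R\mathcal{Q}_{\bm{a},\hslash}$ thanks to the zero-length loops at each node included in $\IMD$ just before Corollary~\ref{cor:quantisation_and_reduction_commute}, and its quantisation is again the scalar $c_i$ viewed in $R_\hslash\big(\widehat{A},\ast_{\bm{a}}\big)$. Setting $\widehat{c}_i \coloneqq c_i$ concludes this step.

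With $\widehat{c}_i$ a scalar, hence central, the commutator $\big[R\widehat{\varphi}^*_{g,\hslash}(\bm{a}) R\widehat{H}_i,\cdot\big]_{\bm{a}}$ coincides with $\big[R\widehat{H}_i,\cdot\big]_{\bm{a}}$. Mirroring the classical computation~\eqref{eq:classical_invariance}, if $\widehat{H}$ is a local section satisfying~\eqref{eq:reduced_quantum_imd_equations} at $\bm{a}.g$, then
\begin{align*}
 \partial_{t_i}\big(R\widehat{\varphi}^*_{g,\hslash}(\bm{a})\widehat{H}\big) &= R\widehat{\varphi}^*_{g,\hslash}(\bm{a})\, \partial_{t_i}\widehat{H} = R\widehat{\varphi}^*_{g,\hslash}(\bm{a}) \big[R\widehat{H}_i,\, \widehat{H}\big]_{\bm{a}.g} \\
 &= \big[R\widehat{\varphi}^*_{g,\hslash}(\bm{a}) R\widehat{H}_i,\, R\widehat{\varphi}^*_{g,\hslash}(\bm{a})\widehat{H}\big]_{\bm{a}} = \big[R\widehat{H}_i,\, R\widehat{\varphi}^*_{g,\hslash}(\bm{a})\widehat{H}\big]_{\bm{a}},
\end{align*}
using that $R\widehat{\varphi}^*_{g,\hslash}(\bm{a})$ is a $\bm{t}$-independent ring isomorphism (Theorem~\ref{theorem:reduced_quantum_action}). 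Hence $R\widehat{\varphi}^*_{g,\hslash}(\bm{a})\widehat{H}$ satisfies the reduced quantum isomonodromy equations at $\bm{a}$, which proves invariance.

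The main obstacle will be justifying that the shift is literally a scalar and not a more general central element polluted by $\hslash$-corrections. Concretely one must verify that the constants $c_i$ belong to the domain of $R\mathcal{Q}_{\bm{a},\hslash}$ --- which is precisely why $\IMD$ is defined to include the zero-length loops at each node --- and that the genericity assumption $\breve{\mathcal{O}} \subseteq U_{[\bm{a}]}$ secured by Lemma~\ref{lemma:choice_good_orbits} prevents $c_i$ from being absorbed into the reduction ideal $\widehat{\mathcal{J}}_{\bm{a},\breve{\mathcal{O}},\hslash}$; this is exactly the content of the ``sufficiently generic'' qualifier in the theorem statement, and corresponds on the geometric side to the ``projective'' flatness alluded to in the Introduction.
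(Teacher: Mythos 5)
Your proof is correct and follows the same route as the paper: apply the reduced quantisation map $R\mathcal{Q}_{\bm{a},\hslash}$ of Corollary~\ref{cor:quantisation_and_reduction_commute} to the classical shift~\eqref{eq:classical_shift}, deduce the quantum shift $R\widehat{\varphi}^*_{g,\hslash}(\bm{a})\,R\widehat{H}_i = R\widehat{H}_i + c_i$, and then transport solutions of~\eqref{eq:reduced_quantum_imd_equations} along the ($\bm{t}$-independent) reduced action exactly as in the classical case~\eqref{eq:classical_invariance}. One small remark on your closing paragraph: the genericity hypothesis $\breve{\mathcal{O}} \subseteq U_{[\bm{a}]}$ is not there to keep the scalar $c_i$ out of the reduction ideal (a proper ideal never contains a nonzero constant); its actual role, via Lemma~\ref{lemma:choice_good_orbits}, is to guarantee that $\mathcal{J}_{\bm{a},\breve{\mathcal{O}}} \cap \Tr(\IMD) \subseteq \Ker(\mathcal{Q}_{\bm{a},\hslash})$, so that the reduced quantisation map $R\mathcal{Q}_{\bm{a},\hslash}$ used throughout your argument is well defined in the first place.
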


\begin{proof}Putting together the previous statements yields
\begin{equation}\label{eq:quantum_shift}
 R\widehat{\varphi}^*_{g,\hslash}(\bm{a}) R\widehat{H}_i = R\widehat{H}_i + c_i, \qquad \text{for} \quad i \in I, \  g \in \SL_2(\mathbb{C}),
\end{equation}
where $c_i \in \mathbb{C}$ is the same constant of Theorem~\ref{theorem:classical_invariance}. Indeed by Corollary~\ref{cor:quantisation_and_reduction_commute} and Theorem~\ref{theorem:classical_invariance}:
\begin{align*}
 R\widehat{\varphi}^*_{g,\hslash}(\bm{a}) R\widehat{H}_i &= R\widehat{\varphi}^*_{g,\hslash}(\bm{a}) \circ \widehat{\pi}_{\breve{\mathcal{O}}}\big(\mathcal{Q}_{\bm{a},\hslash}(H_i)\big) = R\widehat{\varphi}^*_{g,\hslash}(\bm{a}) \circ R\mathcal{Q}_{\bm{a},\hslash} \big(RH_i\big) \\
 &= R\mathcal{Q}_{\bm{a},\hslash} \circ R\varphi^*_g(\bm{a}) \big(RH_i\big) = R\mathcal{Q}_{\bm{a},\hslash}\big(RH_i + c_i\big)
 = R\widehat{H}_i + c_i,
\end{align*}
using the fact that the quantisation map and its reduction preserve constants, where $\mathbb{C} \hookrightarrow \IMD$ is embedded as the sum of the zero-length cycles at each node.

Now consider the extended bundle $R_{\hslash}\big(\widehat{\mathcal{A}},\breve{\mathcal{O}}\big) \times \mathbf{B} \to \mathcal{O}_{\bm{a}} \times \mathbf{B}$, where $\mathcal{O}_{\bm{a}} \subseteq \mathbf{A}$ is the $\SL_2(\mathbb{C})$-orbit of $\bm{a}$. This bundle is by construction trivial along the variations in $\mathbf{B}$, and if $\widehat{H}$ is a local section then we want to relate the reduced quantum isomonodromy equations for of $\widehat{H}$ and $R\widehat{\varphi}^*_g(\bm{a}) \widehat{H}$ for $g \in \SL_2(\mathbb{C})$. Assuming that $\widehat{H}$ is a solution of~\eqref{eq:reduced_quantum_imd_equations} at $\bm{a}$ then:
\begin{align*}
 \partial_{t_i} \big(R\widehat{\varphi}^*_{g,\hslash}(\bm{a})\widehat{H}\big) &= R\widehat{\varphi}^*_{g,\hslash}(\bm{a}) \partial_{t_i} \widehat{H} = R\widehat{\varphi}^*_{g,\hslash}(\bm{a}) \big[R\widehat{H}_i,\widehat{H}\big]_{\bm{a}.g} \\
 &= \big[\widehat{\varphi}^*_{g,\hslash}(\bm{a})R\widehat{H}_i,R\widehat{\varphi}^*_{g,\hslash}(\bm{a}) \widehat{H}\big]_{\bm{a}} = \big[R\widehat{H}_i,R\widehat{\varphi}^*_{g,\hslash}(\bm{a}) \widehat{H}\big]_{\bm{a}},
\end{align*}
because the $\SL_2(\mathbb{C})$-action does not depend on the point in the base space $\mathbf{B}$, using~\eqref{eq:quantum_shift} for the last equality, and since $R\widehat{\varphi}^*_{g,\hslash}(\bm{a})$ is a morphism of associative algebras.
\end{proof}

\begin{Remark}Hence $\widehat{\varphi}^*_{g,\hslash}(\bm{a}) \widehat{H}$ is a solution of~\eqref{eq:reduced_quantum_imd_equations} at $\bm{a} \in \mathbf{A}$, so that a single set of quantum isomonodromy equation at one point controls the time evolution along the whole of the $\SL_2(\mathbb{C})$-orbit of $\bm{a}$.

Analogous statements hold in filtered quantisation.
\end{Remark}

Theorem~\ref{theorem:quantum_invariance} can be geometrically rephrased in terms of flat isomorphisms of vector bundles, as follows. Fix a point $\bm{a} \in \mathbf{A}$, and consider the trivial bundle of noncommutative $\mathbb{C} \llbracket \hslash \rrbracket$-algebras $\big(\widehat{A},\ast_{\bm{a}}\big) \times \mathbf{B} \to \mathbf{B}$, equipped with the universal simply-laced quantum connection $\widehat{\nabla}_{\bm{a}}$ of Theo\-rem~\ref{theorem:slqc}. Taking fibrewise quantum Hamiltonian reduction at a choice of coadjoint orbit \mbox{$\breve{\mathcal{O}} \subseteq U_{[\bm{a}]}$} yields a new trivial bundle $R_{\hslash}\big(\widehat{A},\ast_{\bm{a}}\big) \times \mathbf{B} \to \mathbf{B}$. This bundle carries the reduced simply-laced quantum connection, that is the strongly flat connection defined by
\begin{equation*}
 R\widehat{\nabla}_{\bm{a}} = {\rm d} - R\widehat{\varpi}, \qquad \text{where} \quad R\widehat{\varpi} \coloneqq \sum_{i \in I} R\widehat{H}_i\,
 {\rm d}t_i,
\end{equation*}
with the reduced quantum Hamiltonian acting via their commutator. Then Theorem~\ref{theorem:quantum_invariance} states that the reduced morphism
\begin{equation*}
 R\widehat{\varphi}^*_{g,\hslash}(\bm{a}) \colon \ R_{\hslash}\big(\widehat{A},\ast_{\bm{a}.g}\big) \times \mathbf{B} \longrightarrow R_{\hslash}\big(\widehat{A},\ast_{\bm{a}}\big) \times \mathbf{B}
\end{equation*}
is a flat isomorphism of vector bundles equipped with (flat) connections.

Hence in brief the quantisation of the symmetries of the reduced isomonodromy connection yields symmetries of the reduced universal simply-laced quantum connection. A similar statement holds for the simply-laced quantum connection in filtered quantisation.

\section*{Future directions}

It was shown in~\cite{boalch_2012_simply_laced_isomonodromy_systems} that the $\SL_2(\mathbb{C})$ symmetry group can be used to obtain Kac--Moody Weyl group symmetries, generalising the Okamoto symmetries of the Painlev\'e equations~IV,~V and~VI. Indeed all Painlev\'e equations can be written as nonautonomous Hamiltonian systems, with the cases~IV,~V and~VI all within the scope of the simply-laced isomonodromy systems, and the results of this paper yield quantum version of the resulting Kac--Moody Weyl group symmetries. This generalises some results of~\cite{nagoya_yamada_2014_symmetries_of_quantum_lax_equations_for_the_painleve_equations}, and will be studied in future work.

Also, the quantisation of Section~\ref{sec:quantum_trace} goes in the direction of the quantisation of the necklace Lie algebra of a quiver, independently from~\cite{schedler_2005_a_hopf_algebra_quantizing_a_necklace_lie_algebra_canonically_associated_to_a_quiver} which considers a more refined Hopf algebra quantisation. It should in principle be possible to relate these constructions.

\subsection*{Acknowledgements}

I thank Giovanni Felder for truly helpful discussions, Pavel Etingof and Travis Schedler for truly helpful e-mail exchanges, and Philip Boalch for both.
This research began while the author was a member of the Laboratoire de Math\'ematiques d'Orsay, and was supported by a temporary research and teaching assistant contract; it was concluded while the author was a member of the Department of Mathematics at the ETH of Z\"urich, and was supported by the National Centre of Competence in Research ``SwissMAP-The Mathematics of Physics'' of the Swiss National Science Foundation.

\pdfbookmark[1]{References}{ref}
\LastPageEnding


\begin{thebibliography}{99}
\footnotesize\itemsep=0pt

\bibitem{axelrod_dellapietra_witten_1991_geometric_quantisation_of_chern_simons_gauge_theory}
Axelrod S., Della~Pietra S., Witten E., Geometric quantization of
 {C}hern--{S}imons gauge theory, \href{https://doi.org/10.4310/jdg/1214446565}{\textit{J.~Differential Geom.}} \textbf{33}
 (1991), 787--902.

\bibitem{baumann_1999_the_q_weyl_group_of_a_q_schur_algebra}
Baumann P., The $q$-{W}eyl group of a $q$-{S}chur algebra,
 \href{https://hal.archives-ouvertes.fr/hal-00143359}{hal-00143359}.
 
\bibitem{BezrukavnikovLoseu}
Bezrukavnikov R., Losev I., Etingof's conjecture for quantized quiver varieties, \href{https://doi.org/10.1007/s00222-020-01007-z}{\textit{Invent. Math.}}, {t}o appear, \href{https://arxiv.org/abs/1309.1716}{arXiv:1309.1716}.

\bibitem{boalch_2001_symplectic_manifolds_and_isomonodromic_deformations}
Boalch P.P., Symplectic manifolds and isomonodromic deformations, \href{https://doi.org/10.1006/aima.2001.1998}{\textit{Adv.
 Math.}} \textbf{163} (2001), 137--205, \href{https://arxiv.org/abs/2002.00052}{arXiv:2002.00052}.

\bibitem{boalch_2011_riemann_hilbert_for_tame_complex_parahoric_connections}
Boalch P.P., Riemann--{H}ilbert for tame complex parahoric connections,
 \href{https://doi.org/10.1007/s00031-011-9121-1}{\textit{Transform. Groups}} \textbf{16} (2011), 27--50, \href{https://arxiv.org/abs/1003.3177}{arXiv:1003.3177}.

\bibitem{boalch_2012_simply_laced_isomonodromy_systems}
Boalch P.P., Simply-laced isomonodromy systems, \href{https://doi.org/10.1007/s10240-012-0044-8}{\textit{Publ. Math. Inst.
 Hautes \'Etudes Sci.}} \textbf{116} (2012), 1--68, \href{https://arxiv.org/abs/1107.0874}{arXiv:1107.0874}.

\bibitem{boalch_2011_geometry_and_braiding_of_stokes_data_fission_and_wild_character_varieties}
Boalch P.P., Geometry and braiding of {S}tokes data; fission and wild character
 varieties, \href{https://doi.org/10.4007/annals.2014.179.1.5}{\textit{Ann. of Math.}} \textbf{179} (2014), 301--365,
 \href{https://arxiv.org/abs/1111.6228}{arXiv:1111.6228}.

\bibitem{boalch_yamakawa_2015_twisted_wild_character_varieties}
Boalch P.P., Yamakawa D., Twisted wild character varieties,
 \href{https://arxiv.org/abs/1512.08091}{arXiv:1512.08091}.

\bibitem{collingwood_mcgovern_1993_nilpotent_orbits_in_semisimple_lie_algebras}
Collingwood D.H., McGovern W.M., Nilpotent orbits in semisimple {L}ie algebras,
 \textit{Van Nostrand Reinhold Mathematics Series}, Van Nostrand Reinhold Co., New
 York, 1993.

\bibitem{donin_mudrov_2002_explicit_equivariant_quantization_on_coadjoint_orbits_of_gl_n_c}
Donin J., Mudrov A., Explicit equivariant quantization on coadjoint orbits of
 {${\rm GL}(n,{\mathbb C})$}, \href{https://doi.org/10.1023/A:1021677725539}{\textit{Lett. Math. Phys.}} \textbf{62} (2002),
 17--32, \href{https://arxiv.org/abs/math.QA/0206049}{arXiv:math.QA/0206049}.

\bibitem{donin_mudrov_2002_quantum_coadjoint_orbits_in_gl_n}
Donin J., Mudrov A., Quantum coadjoint orbits in {${\rm gl}(n)$},
 \href{https://doi.org/10.1023/A:1021376702256}{\textit{Czechoslovak~J. Phys.}} \textbf{52} (2002), 1207--1212.

\bibitem{etingof_2007_calogero_moser_systems_and_representation_theory}
Etingof P., Calogero--{M}oser systems and representation theory, \textit{Zurich
 Lectures in Advanced Mathematics}, \href{https://doi.org/10.4171/034}{European Mathematical Society (EMS)},
 Z\"urich, 2007.

\bibitem{etingof_schiffmann_1998_lectures_on_quantum_groups}
Etingof P., Schiffmann O., Lectures on quantum groups, \textit{Lectures in Mathematical
 Physics}, International Press, Boston, MA, 1998.

\bibitem{felder_markov_tarasov_varchenko_2000_differential_equations_compatible_with_kz_equations}
Felder G., Markov Y., Tarasov V., Varchenko A., Differential equations
 compatible with {KZ} equations, \href{https://doi.org/10.1023/A:1009862302234}{\textit{Math. Phys. Anal. Geom.}} \textbf{3}
 (2000), 139--177, \href{https://arxiv.org/abs/math.QA/0001184}{arXiv:math.QA/0001184}.

\bibitem{harnad_1994_dual_isomonodromic_deformations_and_moment_maps_to_loop_algebras}
Harnad J., Dual isomonodromic deformations and moment maps to loop algebras,
 \href{https://doi.org/10.1007/BF02112319}{\textit{Comm. Math. Phys.}} \textbf{166} (1994), 337--365,
 \href{https://arxiv.org/abs/hep-th/9301076}{arXiv:hep-th/9301076}.

\bibitem{hitchin_1990_flat_connections_and_geometric_quantisation}
Hitchin N.J., Flat connections and geometric quantization, \href{https://doi.org/10.1007/BF02161419}{\textit{Comm. Math.
 Phys.}} \textbf{131} (1990), 347--380.

\bibitem{jimbo_miwa_mori_sato_1980_density_matrix_of_an_impenetrable_bose_gas_and_the_fifth_painleve_transcendent}
Jimbo M., Miwa T., M\^ori Y., Sato M., Density matrix of an impenetrable {B}ose
 gas and the fifth {P}ainlev\'e transcendent, \href{https://doi.org/10.1016/0167-2789(80)90006-8}{\textit{Phys.~D}} \textbf{1}
 (1980), 80--158.

\bibitem{kirillov_2016_quiver_representations_and_quiver_varieties}
Kirillov Jr. A.A., Quiver representations and quiver varieties,
 \textit{Graduate Studies in Mathematics}, Vol.~174, \href{https://doi.org/10.1090/gsm/174}{Amer. Math. Soc.},
 Providence, RI, 2016.

\bibitem{knizhnik_zamolodchikov_1984_current_algebra_and_wess_zumino_model_in_two_dimensions}
Knizhnik V.G., Zamolodchikov A.B., Current algebra and {W}ess--{Z}umino model
 in two dimensions, \href{https://doi.org/10.1016/0550-3213(84)90374-2}{\textit{Nuclear Phys.~B}} \textbf{247} (1984), 83--103.

\bibitem{lebruyn_procesi_1990_semisimple_representations_of_quivers}
Le~Bruyn L., Procesi C., Semisimple representations of quivers, \href{https://doi.org/10.2307/2001477}{\textit{Trans.
 Amer. Math. Soc.}} \textbf{317} (1990), 585--598.

\bibitem{losev_2016_deformation_of_symplectic_singularities_and_orbit_method_for_semisimple_lie_algebras}
Losev I., Deformations of symplectic singularities and orbit method for
 semisimple {L}ie algebras, \href{https://arxiv.org/abs/1605.00592}{arXiv:1605.00592}.

\bibitem{millson_toledanolaredo_2005_casimir_operators_and_monodromy_representations_of_generalised_braid_groups}
Millson J.J., Toledano~Laredo V., Casimir operators and monodromy
 representations of generalised braid groups, \href{https://doi.org/10.1007/s00031-005-1008-6}{\textit{Transform. Groups}}
 \textbf{10} (2005), 217--254, \href{https://arxiv.org/abs/math.QA/0305062}{arXiv:math.QA/0305062}.

\bibitem{nagoya_yamada_2014_symmetries_of_quantum_lax_equations_for_the_painleve_equations}
Nagoya H., Yamada Y., Symmetries of quantum {L}ax equations for the
 {P}ainlev\'e equations, \href{https://doi.org/10.1007/s00023-013-0237-9}{\textit{Ann. Henri Poincar\'e}} \textbf{15} (2014),
 313--344, \href{https://arxiv.org/abs/1206.5963}{arXiv:1206.5963}.

\bibitem{rembado_2019_simply_laced_quantum_connections_generalising_kz}
Rembado G., Simply-laced quantum connections generalising {KZ}, \href{https://doi.org/10.1007/s00220-019-03420-9}{\textit{Comm.
 Math. Phys.}} \textbf{368} (2019), 1--54, \href{https://arxiv.org/abs/1704.08616}{arXiv:1704.08616}.

\bibitem{schedler_2005_a_hopf_algebra_quantizing_a_necklace_lie_algebra_canonically_associated_to_a_quiver}
Schedler T., A {H}opf algebra quantizing a necklace {L}ie algebra canonically
 associated to a quiver, \href{https://doi.org/10.1155/IMRN.2005.725}{\textit{Int. Math. Res. Not.}} \textbf{2005} (2005),
 725--760, \href{https://arxiv.org/abs/math.QA/0406200}{arXiv:math.QA/0406200}.

\bibitem{schedler_2012_deformations_of_algebras_in_noncommutative_geometry}
Schedler T., Deformations of algebras in noncommutative geometry, in
 Noncommutative algebraic geometry, \textit{Math. Sci. Res. Inst. Publ.},
 Vol.~64, Cambridge University Press, New York, 2016, 71--165,
 \href{https://arxiv.org/abs/1212.0914}{arXiv:1212.0914}.

\bibitem{schlesinger_1905_ueber_die_loesungen_gewisser_linearer_differentialgleichungen_als_funktionen_der_singularen_punkte}
Schlesinger L., \"Uber die {L}\"osungen gewisser linearer
 {D}ifferentialgleichungen als {F}unktionen der singul\"aren {P}unkte,
 \href{https://doi.org/10.1515/crll.1905.129.287}{\textit{J.~Reine Angew. Math.}} \textbf{129} (1905), 287--294.

\bibitem{toledanolaredo_2002_a_kohno_drinfeld_theorem_for_quantum_weyl_groups}
Toledano~Laredo V., A {K}ohno--{D}rinfeld theorem for quantum {W}eyl groups,
 \href{https://doi.org/10.1215/S0012-9074-02-11232-0}{\textit{Duke Math.~J.}} \textbf{112} (2002), 421--451,
 \href{https://arxiv.org/abs/math.QA/0009181}{arXiv:math.QA/0009181}.

\bibitem{witten_1991_quantization_of_chern_simons_gauge_theory_with_complex_gauge_group}
Witten E., Quantization of {C}hern--{S}imons gauge theory with complex gauge
 group, \href{https://doi.org/10.1007/BF02099116}{\textit{Comm. Math. Phys.}} \textbf{137} (1991), 29--66.

\end{thebibliography}
\end{document}